\documentclass[12pt]{article}

\RequirePackage{fancymath}

\usepackage[affil-it]{authblk}

\makeatletter
\def\@maketitle{%
  \newpage
  \null
  \begin{center}%
  \let \footnote \thanks
    {\Large\bfseries \@title \par}%
    \vskip 1.5em%
    {\normalsize
      \lineskip .5em%
      \begin{tabular}[t]{c}%
        \@author
      \end{tabular}\par}%
    \vskip 0.8em%
    {\small \@date}%
  \end{center}%
  \par
  \vskip 1.5em}
\makeatother

\title{The Riemann integral on Dedekind complete $f$-algebras}

\author{Eder Kikianty%
\thanks{Email: \texttt{eder.kikianty@wits.ac.za}}}
\affil[1]{School of Mathematics, University of the Witwatersrand, Private Bag 3, WITS 2050, South Africa}

\author[2]{Luan Naude%
\thanks{Email: \texttt{luan.naude98@gmail.com}}}
\affil[2]{Department of Mathematics and Applied Mathematics, University of Pretoria, Private Bag X20, Hatfield 0028, South Africa}

\author{Mark Roelands%
\thanks{Email: \texttt{m.roelands@math.leidenuniv.nl}}}
\affil[3]{Mathematical Institute, Leiden University, 2300 RA Leiden,
The Netherlands}

\author[2]{Christopher Schwanke%
\thanks{Email: \texttt{cmschwanke26@gmail.com}}}

\date{\today}

\begin{document}

\maketitle

\vspace{-.8cm}

\begin{abstract}
{\scriptsize In this paper we develop a theory of integration for locally band preserving functions, introduced by Ercan and Wickstead, on Dedekind complete $f$-algebras. Specifically, we construct Darboux and Riemann integrals and show that they are equal. We then connect the theory of integrable functions to the theory of order differentiable functions, introduced by the third and fourth authors, by proving a Fundamental Theorem of Calculus. Furthermore, we show that a Mean Value Theorem for Integrals holds and that we can integrate by parts and substitutions.}  
\end{abstract}

{\footnotesize {\bf Keywords:} Riemann integral, vector lattice, $\Phi$-algebra, differentiation, locally band preserving, Fundamental Theorem of Calculus} 

{\footnotesize {\bf Subject Classification:} Primary: 46A40; Secondary:  46G12}

\section{Introduction}

Recently, an order-theoretic approach to complex analysis was developed in \cite{series, differentiation} for complex $\Phi$-algebras (unital $f$-algebras). The first paper focused on series and power series in universally complete complex $\Phi$-algebras, and the second paper introduced two distinct, but both natural, notions of differentiability using order convergence in Dedekind complete $\Phi$-algebras: \emph{order differentiability} and \emph{super order differentiability}. This theory allows one to do analysis on certain spaces which do not have a natural norm, such as $C^\infty(K)$, and it illustrates how order-theoretic notions play a crucial role in the development of the theory of complex analysis.

A natural continuation of the work done in \cite{series, differentiation} is to develop a theory of differential and integral calculus in real Dedekind complete $\Phi$-algebras. The work done on order derivatives in \cite{differentiation} naturally carries over to the real case, and this paper mainly focuses on the construction and the properties of the Riemann integral.

While developing the theory of Riemann integration, \emph{locally band preserving functions}, first introduced in \cite{Wickstead-Zafer}, became central to the theory. We further developed their properties in \cite{paper1}. Generalisations of classical results were proved for locally band preserving functions in real Dedekind complete $\Phi$-algebras, such as the Intermediate Value Theorem, the Extreme Value Theorem, and the Mean Value Theorem---the last of which uses the notion of order differentiability introduced in \cite{differentiation}.

The structure of the paper is as follows. Some preliminary results are given in Section 2. Primarily, these are results from \cite{paper1} that are needed in the remainder of the paper.

The Riemann integral for locally band preserving and order bounded functions $f \colon [a, b] \to E$ is constructed in Section 3, where $E$ is a Dedekind complete $f$-algebra and $[a, b]$ is an order closed interval in $E$. As in the classical case, we show that the Darboux and Riemann constructions yield the same integral.

In Section 4, we prove analogues of the basic classical properties of the integral. We also show that a locally band preserving and uniformly order continuous function is necessarily integrable. It is known that an order continuous function need not be uniformly order continuous in this setting, so it is an open question whether order continuity is sufficient to guarantee integrability.

Section 5 is devoted to defining $\int_a^b f(x) dx$ when $a$ and $b$ are incomparable. To do this, we use the bands $B_{a < b}$, $B_{b < a}$, and $B_{a = b}$, which were introduced in \cite{paper1}, to mimic the law of trichotomy in $\bR$ where we must either have $a < b$, $b < a$, or $a = b$.

Finally, we establish a connection between the integral and the differentiation theory from \cite{differentiation} by proving a Fundamental Theorem of Calculus in Section 6. We also prove a Mean Value Theorem for Integrals, and use the fundamental theorem to generalise classical integration techniques, namely substitution and integration by parts.

\section{Preliminaries}






For any unexplained terminology or basic results in vector lattices and $f$-algebras, the reader is referred to the standard texts \cite{aliprantis,zaanen1,depagter,zaanen2}.

\begin{notation}
$E$ denotes a real Archimedean vector lattice. We consider only functions mapping a subset of $E$ into $E$, so the notation $f \colon \dom(f) \to E$ implies $\dom(f) \subseteq E$. Any notation used to denote a band will carry over to its band projection, replacing the $B$ with a $\bP$, e.g. the band projections of bands $B$, $B_1$, and $B_x$ will be written as $\bP$, $\bP_1$, and $\bP_x$, respectively.
\end{notation}

\subsection{Band decompositions of $E$}

In \cite[Notation 2.4]{paper1}, the notation $B_{x \leq y}$, $B_{x = y}$, and $B_{x < y}$ for $x,y \in E$ was introduced. By \cite[Proposition 2.7]{paper1}, these can be defined as follows.

\begin{definition}
Let $E$ be a vector lattice with the projection property, and consider $x, y \in E$. We write
\begin{enumerate}[(i)]
\item $B_{x \leq y}$ for the largest band $B$ for which $\bP(x) \leq \bP(y)$,
\item $B_{x = y}$ for the largest band $B$ for which $\bP(x) = \bP(y)$, and
\item $B_{x < y}$ for the largest band $B$ for which $\bP(y - x)$ is a weak order unit in $B$.
\end{enumerate}
\end{definition}

It is shown in \cite[Proposition 2.6]{paper1} that $E = B_{x < y} \oplus B_{y \leq x} = B_{x < y} \oplus B_{y < x} \oplus B_{x = y}$.

In Section 3, we will work with partitions of an order interval $[a, b]$. Classically, the union of two partitions is again a partition, but this is not the case in $E$ since the union may fail to be totally ordered. For this reason, the following lemma and the total orderisation of a finite set will be useful.

\begin{lemma} \label{l:totally_ordered_decomposition}
Let $E$ be a vector lattice with the projection property, and let $A = \{x_1, \dots, x_n\} \subseteq E$. Then there exists a decomposition $E = \bigoplus_{i = 1}^m B_i$ such that $\bP_i(A)$ is totally ordered for $1 \leq i \leq m$.
\end{lemma}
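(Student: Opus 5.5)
Since $E$ has the projection property, we use the band decompositions from the previous subsection. For each pair $i<j$, \cite[Proposition 2.6]{paper1} gives $E = B_{x_i < x_j} \oplus B_{x_j \leq x_i}$. Write $C_{ij}^{0} = B_{x_i<x_j}$ and $C_{ij}^{1} = B_{x_j\le x_i}$. By the definitions, $\bP(x_i) \le \bP(x_j)$ for every band $B \subseteq C_{ij}^0$, since $\bP(x_j-x_i)$ is positive there. Likewise $\bP(x_j) \le \bP(x_i)$ for every band $B\subseteq C_{ij}^1$, since that inequality passes to subbands of $B_{x_j\le x_i}$. So on each piece, $x_i$ and $x_j$ are comparable after projecting.

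The plan is to intersect all these decompositions. For each function $\varepsilon$ from the set of pairs $\{(i,j): 1\le i<j\le n\}$ to $\{0,1\}$, set
\[
B_\varepsilon = \bigcap_{i<j} C_{ij}^{\varepsilon(i,j)}.
\]
Band projections in a vector lattice with the projection property commute, and the projection onto an intersection of bands is their composition. Hence $\bP_\varepsilon = \prod_{i<j} \bP_{C_{ij}^{\varepsilon(i,j)}}$. Expanding the product
\[
I = \prod_{i<j}\bigl(\bP_{C_{ij}^0}+\bP_{C_{ij}^1}\bigr)
\]
gives $I=\sum_\varepsilon \bP_\varepsilon$. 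Distinct $\varepsilon$ differ in at least one pair $(i,j)$, so their bands lie in the disjoint bands $C_{ij}^0$ and $C_{ij}^1$. The $\bP_\varepsilon$ are therefore mutually orthogonal, and $E=\bigoplus_\varepsilon B_\varepsilon$. Discarding the zero bands and relabelling gives $E = \bigoplus_{k=1}^m B_k$ with $m \le 2^{n(n-1)/2}$.

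It remains to check that $\bP_\varepsilon(A)$ is totally ordered. Take $x_i, x_j\in A$ with $i<j$. Since $B_\varepsilon \subseteq C_{ij}^{\varepsilon(i,j)}$, the first paragraph shows that $\bP_\varepsilon(x_i)$ and $\bP_\varepsilon(x_j)$ are comparable. If $i=j$ there is nothing to prove. So every two elements of $\bP_\varepsilon(A)$ are comparable, and $\bP_\varepsilon(A)$ is a finite totally ordered set.

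The only delicate point is justifying that the order relations survive restriction to subbands. For $B_{x\le y}$ this holds because the defining inequality $\bP(x)\le\bP(y)$ passes to smaller bands: apply the positive operator $\bP'$ for $B'\subseteq B$, using $\bP'\bP=\bP'$. For $B_{x<y}$ we need $\bP_{x<y}(y-x)\ge 0$. This holds since a weak order unit in a band is by definition positive, or alternatively since $B_{x<y}\subseteq B_{x\le y}$. If that containment is not immediate from the definitions, we simply use the decomposition $E=B_{x_i\le x_j}\oplus B_{x_j<x_i}$ instead, obtained from \cite[Proposition 2.6]{paper1} with the roles of $x_i$ and $x_j$ swapped. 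Then the $\le$ relation is explicit on one piece, and on the other piece $\bP(x_i - x_j)$ is a weak order unit and hence positive.
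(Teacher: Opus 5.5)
Your proof is correct and follows the paper's argument. Both proofs intersect, over pairs of indices, one band from each decomposition $E = B_{x_i<x_j}\oplus B_{x_j\le x_i}$, then note that comparability of $x_i$ and $x_j$ passes to subbands. Your expansion of $\prod_{i<j}\bigl(\bP_{C_{ij}^0}+\bP_{C_{ij}^1}\bigr)$ into commuting projections just spells out the paper's appeal to distributivity of sums and intersections of bands. Restricting to pairs with $i<j$ only removes the redundant indices the paper keeps.
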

\begin{proof}
The collection of bands $B$ of the form
\[ B \defeq \bigcap_{ 1 \leq i, j \leq n} B_{i, j} \]
where $B_{i, j} \in \{ B_{x_i < x_j}, B_{x_i \geq x_j} \}$ is such a decomposition of $E$. By definition these bands are disjoint, and that they sum to $E$ follows from the fact that $B_{x_i < x_j} \oplus B_{x_i \geq x_j} = E$ and the distributivity of sums and intersections of bands. Consider the band projection $\bP$ corresponding to such a $B$. Then, for $1 \leq k, l \leq n$, we have that $\bP \leq \bP_{k, l}$ and either $\bP_{k, l}(x_k) \leq \bP_{k, l} (x_l)$ or $\bP_{k, l}(x_l) \leq \bP_{k, l} (x_k)$. Hence, $\bP(A)$ is totally ordered.
\end{proof}

The total orderisation $\totord(A)$ of a nonempty finite subset $A$ of a distributive lattice was defined in \cite[Definition 2.4]{totord}.

\begin{definition}
Let $L$ be a distributive lattice, with $x_1, \dots, x_n \in L$. For $1 \leq k \leq n$, define
\[ \cM_k(x_1, \dots, x_n) := \bigvee_{1 \leq i_1 < \cdots < i_{n+1-k} \leq n} \left( \bigwedge_{j = 1}^{n+1-k} x_{i_j} \right). \]

The set $\{\cM_1(x_1, \dots, x_n), \dots, \cM_n(x_1, \dots, x_n)\}$ is called the \emph{total orderisation} of $\{x_1, \dots, x_n\}$ and denoted $\totord(\{x_1, \dots, x_n\})$.
\end{definition}

For a totally ordered set $\{x_1, \dots, x_n\}$, $\cM_k(x_1, \dots, x_n)$ would simply be the $k$-th smallest element of the set. Therefore, in light of \Cref{l:totally_ordered_decomposition}, an alternative construction of the total orderisation exists for vector lattices with the projection property---where we select the $k$-th smallest element in each of the bands $B_i$ and add them together to get $\cM_k(x_1, \dots, x_n)$. From this construction, the following lemma follows easily.



\begin{lemma} \label{l:totord_decomposition}
Let $E$ be a vector lattice with the projection property, and let $A = \{x_1, \dots, x_n\} \subseteq E$. Then $\totord(A)$ is the unique totally ordered subset $A' \subseteq E$ such that there exists a decomposition $E = \bigoplus_{i = 1}^m B_i$ such that $\bP_i(A) = \bP_i(A')$ for $1 \leq i \leq m$.
\end{lemma}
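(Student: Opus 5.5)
Two things need proving: existence, namely that $\totord(A)$ is totally ordered and agrees with $A$ on each band of a suitable decomposition, and uniqueness among all totally ordered sets with this property.

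\textbf{Existence.} Take the decomposition $E=\bigoplus_{i=1}^m B_i$ of \Cref{l:totally_ordered_decomposition}, so that each $\bP_i(A)$ is totally ordered. A band projection $\bP$ is a lattice homomorphism, so it commutes with the finite suprema and infima in the definition of $\cM_k$:
\[ \bP_i(\cM_k(x_1,\dots,x_n))=\cM_k(\bP_i x_1,\dots,\bP_i x_n). \]
Because $\bP_i(A)$ is totally ordered, the right-hand side is the $k$-th smallest entry of the list $(\bP_i x_1,\dots,\bP_i x_n)$, counted with multiplicity. Consequently:
\begin{itemize}
\item $\bP_i(\totord(A))=\bP_i(A)$ as sets;
\item $\bP_i(\cM_k)\le \bP_i(\cM_{k+1})$ for every $i$ and every $k$.
\end{itemize}
Summing over $i$ gives $\cM_1\le\cdots\le\cM_n$, so $\totord(A)$ is totally ordered.

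\textbf{Uniqueness.} Let $A'$ be totally ordered and suppose some decomposition $E=\bigoplus_{j} C_j$ satisfies $\mathbb{Q}_j(A)=\mathbb{Q}_j(A')$ for each $j$. Pass to the common refinement with bands $B_i\cap C_j$; the equalities $\bP(A)=\bP(A')$ and $\bP(A)=\bP(\totord(A))$ both persist on these smaller bands. On each such band $D$, with projection $\bP_D$, we then have $\bP_D(A')=\bP_D(\totord(A))$.

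Write $A'=\{y_1<\dots<y_r\}$ and $\totord(A)=\{z_1\le\dots\le z_n\}$, where the $z_k$ may repeat. On each $D$ the $\bP_D(y_j)$ are ordered and equal to the set $\bP_D(A)$. The count is the difficulty: the number of distinct values in $\bP_D(A)$ varies with $D$, so $r$ need not match that number on each band, and the $\bP_D(y_j)$ may coincide.

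This index matching is the main obstacle, and I would handle it as follows. Put $s_D=|\bP_D(A)|$. Since the $\bP_D(y_j)$ are increasing in $j$ and their set has $s_D$ elements, $\bP_D(y_j)$ is the $\ell$-th smallest element of $\bP_D(A)$ for some $\ell$ determined by $j$ and the pattern of coincidences. Here I expect to need to read ``subset'' up to the natural identification, i.e.\ that the claimed equality is set equality $A'=\totord(A)$. The approach is to express both sets intrinsically: for $w\in E$, membership $w\in A'$ should be equivalent to $w$ lying in the sublattice-closed family of elements $u$ with $\bP_D(u)\in \bP_D(A)$ for all $D$, together with being comparable with everything in that family.

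Failing that, I would restrict to the form the later sections actually use. For each $k$, the unique totally ordered choice obtained by selecting the $k$-th smallest element in every band yields $\cM_k$, as argued in the remark preceding this lemma. Any totally ordered $A'$ with $n$ members, listed increasingly, must agree bandwise with that selection, since in a totally ordered band the increasing listing is forced. Summing over the bands gives $A'=\totord(A)$.
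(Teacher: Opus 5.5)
Your existence half is correct and is exactly the paper's justification. You take the decomposition of \Cref{l:totally_ordered_decomposition}, use that each $\bP_i$ is a lattice homomorphism so that $\bP_i(\cM_k)$ is the $k$-th smallest entry of $(\bP_i x_1,\dots,\bP_i x_n)$, and sum over $i$.

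The gap is in uniqueness, and it cannot be closed as you set it up. Read literally, with $\bP_i(A)=\bP_i(A')$ as equality of sets, the uniqueness claim in the lemma is false. Take $E=\bR^2$ with its two coordinate bands and $A=\{(1,0),(0,1)\}$, so $\totord(A)=\{(0,0),(1,1)\}$. The chain $A'=\{(0,0),(0,1),(1,1)\}$ satisfies $\bP_1(A')=\{(0,0),(1,0)\}=\bP_1(A)$ and $\bP_2(A')=\{(0,0),(0,1)\}=\bP_2(A)$, yet $A'\neq\totord(A)$.

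In particular, your intrinsic membership criterion is wrong. Here $(0,1)\in A'$ is incomparable with $(1,0)$, which lies in your sublattice $\{0,1\}^2$. The criterion does not even recover $\totord(A)$ in the next example, where $(0,1)\in\totord(A)$.

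Your fallback breaks for the same reason, even with the restriction to $n$ members. For $A=\{(0,0),(0,1),(1,1)\}=\totord(A)$, the three-element chain $\{(0,0),(1,0),(1,1)\}$ agrees with $A$ as a set on both coordinate bands. The step ``in a totally ordered band the increasing listing is forced'' is valid only if values are matched with multiplicity. That is precisely the counting problem you identified and then set aside.

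What is true is the version with multiplicities, and this is what the paper's one-line remark (pick the $k$-th smallest element in each band and add) actually gives. Suppose $y_1\le\dots\le y_n$, and there are a decomposition $E=\bigoplus_{i}B_i$ and permutations $\sigma_i$ with $\bP_i(y_k)=\bP_i(x_{\sigma_i(k)})$ for all $i,k$. Then $y_k=\cM_k$ for every $k$.

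No common refinement is needed. Each $\bP_i(A)$ is automatically totally ordered, since it is the image of the chain $\{y_1,\dots,y_n\}$. Hence $\bP_i(\cM_k)=\cM_k(\bP_i x_1,\dots,\bP_i x_n)$ is the $k$-th smallest entry of a rearrangement of $(\bP_i y_1,\dots,\bP_i y_n)$, i.e.\ it equals $\bP_i(y_k)$. Summing over $i$ gives $y_k=\cM_k$.

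You should prove uniqueness in this form, or drop it. Only the existence half is used later, in \Cref{l:totord_union_is_refinement}, so nothing downstream depends on uniqueness.
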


\subsection{Order intervals}

We introduce the notation we use for order intervals in $E$. It is natural for order closed intervals to use $\leq$, but for order open intervals it is more natural to use a relation we denote $\ll$ rather than $<$. This was done already in \cite{paper1, series, differentiation}.

\begin{notation}
Let $a, b \in E$. The notation $a \ll b$ means that $b - a$ is a weak order unit of $E$. If $a \leq b$, we write
\[ [a, b] := \{x \in E: a \leq x \leq b \}. \]
If $a \ll b$, we write
\[ (a, b) := \{x \in E: a \ll x \ll b \}. \]
Whenever we write $[a, b]$ (respectively $(a, b)$)---for example, as the domain of a function---the reader may assume that $a, b \in E$ and $a \leq b$ (respectively $a \ll b$).
\end{notation}

\subsection{Order continuity and uniform order continuity}

In \cite[Proposition 2.13]{paper1}, order continuity was characterised in a way that resembles the classical $\eps-\delta$ definition of continuity. A similar argument can be used for uniformly order continuous functions. We give these characterisations below in \Cref{p:eps_delta_continuity}.

\begin{definition}
Consider $f \colon \dom(f) \to E$. We say $f$ is \emph{order continuous at $c \in \dom(f)$} if whenever $x_\alpha \to c$ in $\dom(f)$, we have $f(x_\alpha) \to f(c)$. The function $f$ is called \emph{order continuous} if it is order continuous at every point in its domain. We say $f$ is \emph{uniformly order continuous} if whenever $x_\alpha - y_\alpha \to 0$ in $\dom(f)$, we have $f(x_\alpha) - f(y_\alpha) \to 0$.
\end{definition}

\begin{notation}
The notation $\Delta \downarrow 0$ means that $\Delta$ is a downwards directed set in $E$ with infimum $0$, whereas $\Eps \searrow 0$ means that $\Eps$ is a subset of $E$ with infimum $0$ that needs not be directed.
\end{notation}

In the following characterisations of order continuity and uniform order continuity, and later in \Cref{d:order_differentiable,d:super_order_differentiabile}, we use $\Delta \downarrow 0$ and $\Eps \searrow 0$. It is possible to replace $\Eps \searrow 0$ with $\Eps \downarrow 0$, but $\Delta$ must be downwards directed. Therefore, the different arrows indicate when directedness is required

\begin{proposition} \label{p:eps_delta_continuity}
Let $E$ be a real vector lattice. A function $f \colon \dom(f) \to E$ is
\begin{enumerate}[(i)]
\item order continuous at $c \in \dom(f)$ if and only if for every $\Delta \downarrow 0$ there exists an $\Eps \searrow 0$ with the property that for all $\eps \in \Eps$ there exists a $\delta \in \Delta$ satisfying
\[ x \in \dom(f) \text{ and } \abs{x - c} \leq \delta \implies \abs{f(x) - f(c)} \leq \eps. \]
\item uniformly order continuous if and only if for every $\Delta \downarrow 0$ there exists an $\Eps \searrow 0$ with the property that for all $\eps \in \Eps$ there exists a $\delta \in \Delta$ satisfying
\[ x, y \in \dom(f) \text{ and } \abs{x - y} \leq \delta \implies \abs{f(x) - f(y)} \leq \eps. \]
\end{enumerate}
\end{proposition}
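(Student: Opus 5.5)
The key input is the meaning of order convergence in an Archimedean vector lattice: $z_\alpha \to z$ means there is a net (or set) $\Delta \downarrow 0$ dominating $\abs{z_\alpha - z}$ eventually. More precisely, there exist $\Delta\downarrow 0$ such that for every $\delta\in\Delta$ there is $\alpha_0$ with $\abs{z_\alpha-z}\le\delta$ for all $\alpha\ge\alpha_0$. I will prove (i) in full; (ii) is the same argument with pairs $(x,y)$ in place of $x$ and $x-y$ in place of $x-c$.

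For the reverse implication of (i), let $x_\alpha\to c$ in $\dom(f)$. Choose $\Delta\downarrow0$ witnessing this convergence, and take the corresponding $\Eps\searrow0$ from the hypothesis. Let $\Eps'$ be the set of all finite infima of elements of $\Eps$. Then $\Eps'\downarrow 0$, since it is downward directed and has the same infimum, namely $0$.

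Given $\eps_1\wedge\dots\wedge\eps_k\in\Eps'$, pick $\delta_i\in\Delta$ for each $\eps_i$. By directedness of $\Delta$, choose $\delta\in\Delta$ below all the $\delta_i$. Then choose $\alpha_0$ with $\abs{x_\alpha-c}\le\delta$ for $\alpha\ge\alpha_0$. This gives $\abs{f(x_\alpha)-f(c)}\le\eps_i$ for each $i$, hence $\abs{f(x_\alpha)-f(c)}\le\eps_1\wedge\dots\wedge\eps_k$. So $f(x_\alpha)\to f(c)$.

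For the forward implication, argue by constructing a net. Given $\Delta\downarrow0$, index a net by $\Delta$ itself, ordered downward, together with a choice of point. Let $I=\{(\delta,x):\delta\in\Delta,\ x\in\dom(f),\ \abs{x-c}\le\delta\}$, with $(\delta,x)\le(\delta',x')$ iff $\delta'\le\delta$. This set is directed: it is nonempty since $(\delta,c)\in I$, and any two indices can be dominated because $\Delta$ is directed. The net $x_{(\delta,x)}=x$ converges to $c$, since $\abs{x-c}\le\delta$. Hence $f(x_{(\delta,x)})\to f(c)$, so there is $\Eps\downarrow0$ with the property that for each $\eps\in\Eps$ there is an index $(\delta_0,x_0)$ beyond which $\abs{f(x)-f(c)}\le\eps$.

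Now take $\delta=\delta_0$. Any $x$ with $\abs{x-c}\le\delta_0$ gives an index $(\delta_0,x)\ge(\delta_0,x_0)$. Therefore $\abs{f(x)-f(c)}\le\eps$, which is exactly the required property.

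The main subtlety is this forward direction: one must build a single net that ranges over all admissible $x$ at each level $\delta$. The index-set trick above handles this, and it also explains why $\Delta$ must be directed while $\Eps$ need not be. For (ii), use the index set $\{(\delta,x,y):\abs{x-y}\le\delta\}$ and the nets $x_{(\delta,x,y)}=x$, $y_{(\delta,x,y)}=y$.
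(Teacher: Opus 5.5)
Your proof is correct and complete. For the converse you pass to finite infima of $\Eps$ and pick $\delta\in\Delta$ below the finitely many $\delta_i$. For the forward direction you use the net indexed by $\{(\delta,x):\delta\in\Delta,\ x\in\dom(f),\ |x-c|\le\delta\}$, preordered by the $\delta$-coordinate only, which is exactly the right device. The same construction with triples $(\delta,x,y)$ handles (ii); for nonemptiness of that index set, use $(\delta,x_0,x_0)$ for any $x_0\in\dom(f)$, the empty-domain case being trivial.

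The paper gives no proof of its own here. It defers (i) to \cite[Proposition~2.13]{paper1} and says (ii) follows by a similar argument, so there is no in-paper proof to compare against, and your net-indexing argument fills that gap.
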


\subsection{Locally band preserving functions}

The notion of a locally band preserving function was first introduced in \cite{Wickstead-Zafer}, and was investigated further in \cite{paper1}. In Section 3, we will construct a Riemann integral for order bounded functions $f \colon [a, b] \to E$, and we will investigate the integrability of $f$ primarily when $f$ is locally band preserving. What follows is some of the properties of locally band preserving functions that will be needed throughout the remainder of the paper. We use a definition that is equivalent to the one used in \cite{Wickstead-Zafer} by \cite[Proposition 4.4.]{paper1} for vector lattices with the projection property.

\begin{definition}
Let $E$ be a vector lattice with the projection property. A function $f \colon \dom(f) \to E$ is \emph{locally band preserving} if for every band projection $\bP$ and for all $x, y \in \dom(f)$, we have that $\bP(x) = \bP(y)$ implies $\bP(f(x)) = \bP(f(y))$.
\end{definition}

\begin{example}\phantom{x} 
\begin{enumerate}[(i)]
\item A linear function $f \colon E \to E$ is locally band preserving if and only if it is band preserving.
\item Polynomials and power series, as in \cite{series}, defined on $E$ are locally band preserving.
\item In $\ell^\infty$, a function is locally band preserving if and only if it is defined coordinatewise.
\end{enumerate}
\end{example}

A locally band preserving function is precisely one whose behaviour on a band is completely independent of anything outside the band. Given a locally band preserving function $f$ and a band $B$, one can define $f_B \colon \bP (\dom(f)) \to B$ by $f_B(x) = \bP(f(y))$ where $y \in \dom(f)$ with $\bP(y) = x$. Then $f(x) = f_B(\bP(x)) + f_{B^d}(\bP^d(x))$. This is the key idea that informs the following properties of locally band preserving functions. These properties are from \cite[Proposition 4.12.]{paper1}.

\begin{proposition}
Let $E$ be a vector lattice with the projection property. Let $f \colon [a, b] \to E$ be locally band preserving with $[c, d] \subseteq [a, b]$. The following statements hold.
\begin{enumerate}[(i)]
\item If $f$ is order bounded above on $[c, d]$ by $M$, then $\bP(f(x)) \leq \bP(M)$ holds for all $x \in [a, b]$ and all band projections $\bP$ such that $\bP(c) \leq \bP(x) \leq \bP(d)$.
\item If $f$ is order bounded below on $[c, d]$ by $m$, then $\bP(f(x)) \geq \bP(m)$ holds for all $x \in [a, b]$ and all band projections $\bP$ such that $\bP(c) \leq \bP(x) \leq \bP(d)$.
\item Let $x \in [a, b]$ and $\eps, \delta \in E^+$ be such that for all $y \in [a, b]$, $\abs{x - y} \leq \delta$ implies $\abs{f(x) - f(y)} \leq \eps$, then for any band projection $\bP$, $\bP(\abs{x-y}) \leq \bP(\delta)$ implies $\bP(\abs{f(x) -f(y)}) \leq \bP(\eps)$.
\end{enumerate}
\end{proposition}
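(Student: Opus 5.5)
The common idea for all three parts is a single patching construction. Given a band projection $\bP$ and a point $x \in [a,b]$ whose $\bP$-component behaves well relative to $[c,d]$, we build a new point $y \in [c,d]$ with $\bP(y) = \bP(x)$. Local band preservation then gives $\bP(f(x)) = \bP(f(y))$, and the hypothesis on $[c,d]$ (or on the $\delta$-neighbourhood of $x$) applies to $y$.

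For (i), fix $x \in [a,b]$ and a band projection $\bP$ with $\bP(c) \leq \bP(x) \leq \bP(d)$. Set
\[ y \defeq \bP(x) + \bP^d(c). \]
Since $E = \bP(E) \oplus \bP^d(E)$ and the order is computed componentwise, we get $\bP(c) \leq \bP(y) = \bP(x) \leq \bP(d)$ and $\bP^d(c) = \bP^d(y) \leq \bP^d(d)$. Hence $c \leq y \leq d$, so $y \in [c,d] \subseteq [a,b]$ and $y \in \dom(f)$. We have $\bP(y) = \bP(x)$, so local band preservation gives $\bP(f(x)) = \bP(f(y))$. By hypothesis $f(y) \leq M$, and band projections are positive, so $\bP(f(x)) = \bP(f(y)) \leq \bP(M)$.

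Part (ii) follows from the same argument with the inequalities reversed, or by applying (i) to $-f$, which is again locally band preserving and is bounded above by $-m$.

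For (iii), fix $y \in [a,b]$ and a band projection $\bP$ with $\bP(\abs{x-y}) \leq \bP(\delta)$. Set
\[ z \defeq \bP(y) + \bP^d(x). \]
We first check that $z \in [a,b]$. Componentwise, $\bP(a) \leq \bP(y) \leq \bP(b)$ and $\bP^d(a) \leq \bP^d(x) \leq \bP^d(b)$, so $a \leq z \leq b$. Next, $x - z = \bP(x-y)$. Because band projections are lattice homomorphisms,
\[ \abs{x - z} = \bP(\abs{x-y}) \leq \bP(\delta) \leq \delta, \]
where the last inequality uses $\delta \geq 0$. The hypothesis therefore yields $\abs{f(x) - f(z)} \leq \eps$. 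Since $\bP(z) = \bP(y)$, local band preservation gives $\bP(f(z)) = \bP(f(y))$. Putting these together,
\[ \bP(\abs{f(x)-f(y)}) = \abs{\bP(f(x)) - \bP(f(z))} = \bP(\abs{f(x)-f(z)}) \leq \bP(\eps). \]

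The only step needing care is checking that the patched point stays in the relevant interval. This rests on the fact that the order in $E$ decomposes componentwise along the direct sum $\bP(E) \oplus \bP^d(E)$, which holds because $E$ has the projection property. With that in hand, none of the three parts presents a real obstacle.
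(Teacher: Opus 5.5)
Your proof is correct: all three patched points lie in the required interval, and each use of the locally band preserving property and of $\abs{\bP(u)}=\bP(\abs{u})$ is justified. The paper does not prove this proposition itself but imports it from \cite[Proposition 4.12]{paper1}; your construction $\bP(x)+\bP^d(c)$ is, however, the same device the paper uses in the proof of \Cref{l:lbp_extrema_preserved} to keep the patched point in the domain, so your argument is essentially the intended one.
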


Finally, we mention that the Intermediate Value Theorem and the Extreme Value Theorem were generalised for locally band preserving functions in \cite[Theorems 5.3 and 5.5]{paper1}.

\begin{theorem}[Intermediate Value Theorem] \label{t:IVT}
Let $E$ be a Dedekind complete vector lattice. If $f \colon [a, b] \to E$ is order continuous and locally band preserving, and $y \in [f(a) \wedge f(b), f(a) \vee f(b)]$, then there exists a $c \in [a, b]$ such that $f(c) = y$.
\end{theorem}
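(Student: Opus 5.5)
We prove \Cref{t:IVT} by adapting the classical supremum argument, using Dedekind completeness and the band-wise decompositions of \Cref{l:totally_ordered_decomposition}.

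\textbf{Reduction to $f(a) \leq y \leq f(b)$.} Decompose $E = B_{f(a) \leq f(b)} \oplus B_{f(b) < f(a)}$. On the first band we have $\bP(f(a)) \leq \bP(y) \leq \bP(f(b))$; on the second the inequalities are reversed. Suppose we find $c_1$ solving the problem on the first band and $c_2$ solving it on the second (replacing $f$ by $-f$ and $y$ by $-y$ there). Set $c = \bP(c_1) + \bP^d(c_2)$. Then $c \in [a,b]$, and $\bP(c) = \bP(c_1)$, $\bP^d(c) = \bP^d(c_2)$. Since $f$ is locally band preserving, $\bP(f(c)) = \bP(f(c_1)) = \bP(y)$, and similarly on $B^d$, so $f(c) = y$. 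Hence it suffices to prove the result when $f(a) \leq y \leq f(b)$, or more precisely for the band-restricted function $f_B$, which is again order continuous and locally band preserving.

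\textbf{Main construction.} Assume $f(a) \leq y \leq f(b)$. Let
\[ S = \{x \in [a,b] : \bP_{f(x) \leq y}(x) = x \text{ on the relevant part}\}. \]
A more robust choice is to take all $x$ such that, for the band $B_x := B_{f(x) \leq y}$, the element $\bP_x(x) + \bP_x^d(a)$ still satisfies $f \leq y$. Concretely, for any $x$, the element $x' = \bP_{f(x)\le y}(x) + \bP_{f(x)\le y}^d(a)$ satisfies $f(x') \leq y$ by local band preservation. Let $S = \{x \in [a,b] : f(x) \leq y\}$; it contains $a$. The key lemma is that $S$ is upward directed: for $x_1, x_2 \in S$, $x_1 \vee x_2$ agrees with $x_1$ on $B_{x_2 \leq x_1}$ and with $x_2$ on the complementary band, so $f(x_1 \vee x_2) \leq y$ by local band preservation. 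Let $c = \sup S$, which exists by Dedekind completeness. Since $S$ is directed, the net $S \ni x \uparrow c$, and order continuity gives $f(x) \to f(c)$, hence $f(c) \leq y$.

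\textbf{Reverse inequality (the main obstacle).} Suppose $f(c) \neq y$. Consider the band $B = B_{f(c) < y}$, on which $y - f(c)$ is a weak unit. The plan is to show that there is a nonzero band $C \subseteq B$ and a point $x > c$ on $C$ with $f(x) \leq y$ there; patching $x$ on $C$ with $c$ off $C$ then gives an element of $S$ exceeding $c$, a contradiction. To find such $x$, take the net $x_t = c + t(b - c)$, $t \downarrow 0$. Order continuity gives $f(x_t) \to f(c)$, so $\abs{f(x_t) - f(c)} \leq \eps$ eventually, for $\eps$ from a family with infimum $0$. Choose $\eps$ with $\bP_C(\eps) < \bP_C(y - f(c))$ on some nonzero band $C$; such a band exists since $\inf \Eps = 0$ while $y - f(c) > 0$ on $B$. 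One must also ensure that $b - c \neq 0$ on $C$. On $B_{c = b}$ we already have $f(c) = f(b) \geq y$ there, so $B \perp B_{c=b}$ and we may shrink $C$ into $B_{c < b}$. The delicate part is producing $C$ with strict inequality on a whole band via $B_{\eps < y - f(c)}$ and checking that it is nonzero, then applying part (iii) of the proposition on locally band preserving functions band-wise. Once $C$ is found, $\bP_C(x_t) + \bP_C^d(c) \in S$ but is not $\leq c$, which contradicts $c = \sup S$. Therefore $f(c) = y$.
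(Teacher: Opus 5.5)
The paper gives no proof of this statement. It is quoted from \cite[Theorem 5.3]{paper1} as a preliminary, so there is no in-paper argument to compare yours with, and I assess it on its own terms. Your supremum argument is correct. The following steps are all sound:
\begin{enumerate}[(i)]
\item the directedness of $S=\{x\in[a,b]: f(x)\le y\}$ via local band preservation;
\item the inequality $f(c)\le y$ from the increasing net $S\ni x\uparrow c=\sup S$;
\item the observation that $B_{f(c)<y}\perp B_{c=b}$, hence $B_{f(c)<y}\subseteq B_{c<b}$, using $c\le b$ and $f(b)\ge y$;
\item the patching with $\bP_{f(a)\le f(b)}$ in the reduction.
\end{enumerate}

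The step you call delicate is routine, and you should simply write it out. Put $u:=y-f(c)\ge 0$ with $u\ne 0$, and let $\Eps$ regulate $f(x_t)\to f(c)$. If $(u-\eps)^+=0$ for all $\eps\in\Eps$, then $u\le\inf\Eps=0$. So fix $\eps\in\Eps$ with $(u-\eps)^+\neq 0$, and let $C$ be the band generated by $(u-\eps)^+$. Then $C\subseteq B_{f(c)<y}\subseteq B_{c<b}$, $\bP_C(\eps)\le\bP_C(u)$, and $\bP_C(b-c)\neq 0$.

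For small $t>0$, the global bound $\abs{f(x_t)-f(c)}\le\eps$ already gives $\bP_C(f(x_t))\le\bP_C(y)$. No appeal to part (iii) of the proposition on locally band preserving functions is needed. Hence $x':=c+t\bP_C(b-c)\in[c,b]$ satisfies $f(x')=\bP_C(f(x_t))+\bP_C^d(f(c))\le y$, i.e.\ $x'\in S$. Since also $x'\ge c$ and $x'\ne c$, this contradicts $c=\sup S$.

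Two cosmetic points. First, drop the abandoned first definition of $S$. Second, the reduction can be done globally instead of passing to $f_B$. Let $\bP:=\bP_{f(a)\le f(b)}$ and $g:=(\bP-\bP^d)\circ f$. Then $g$ is order continuous, since $\abs{(\bP-\bP^d)(z)}=\abs{z}$, and locally band preserving, since band projections commute. The element $\tilde y:=(\bP-\bP^d)(y)$ satisfies $g(a)\le\tilde y\le g(b)$, and $g(c)=\tilde y$ if and only if $f(c)=y$.
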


\begin{theorem}[Extreme Value Theorem] \label{t:EVT}
Let $E$ be a Dedekind complete vector lattice. If $f \colon [a, b] \to E$ is order continuous and locally band preserving, then there exist $c, d \in [a, b]$ such that for all $x \in [a, b]$, we have $f(c) \leq f(x) \leq f(d)$.
\end{theorem}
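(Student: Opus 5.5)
The Extreme Value Theorem is quoted from \cite[Theorem 5.5]{paper1}, so here is how I would reprove it from the tools recalled above. By symmetry, replacing $f$ by $-f$, it suffices to produce $d$ with $f(x) \leq f(d)$ for all $x$.

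\textbf{Step 1: order boundedness.} First I show that $f([a,b])$ is order bounded. I argue by bisection on bands. Put $M := \sup f([a,b])$ in the universal completion, or argue directly, and suppose $f$ is unbounded on some band. Using Dedekind completeness, the bands on which $f$ is bounded form an ideal of bands. The band projection onto their supremum should give boundedness, by gluing bounds with the locally band preserving property and part (i) of the preceding proposition. If this band is not all of $E$, I work on the complement. There I repeatedly halve intervals, $[c,d] \mapsto [c,\tfrac{c+d}{2}]$ or $[\tfrac{c+d}{2},d]$, choosing bandwise via projections. This produces nested intervals whose lengths $(b-a)/2^n \downarrow 0$ and whose endpoints converge to a point $x_0$. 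Order continuity at $x_0$, through Proposition \ref{p:eps_delta_continuity}(i) together with part (iii) of the preceding proposition localised to bands, gives a bound near $x_0$ on a nonzero band. This contradicts maximality.

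\textbf{Step 2: set up $M$ and a candidate maximiser.} Let $M := \sup_{x\in[a,b]} f(x)$, which exists by Dedekind completeness. The goal is to find $d$ with $f(d)=M$. Since a plain supremum need not be attained by an element, I use the locally band preserving property to glue near-maximisers across bands. For finitely many points $x_1,\dots,x_n$, Lemma \ref{l:totally_ordered_decomposition} applied to $\{f(x_i)\}$ gives a decomposition $E=\bigoplus B_i$ on each band of which one $x_{j(i)}$ maximises. Define $y:=\sum_i \bP_i(x_{j(i)})$. Then $y \in [a,b]$, and $\bP_i(f(y))=\bP_i(f(x_{j(i)}))$, so $f(y)=\bigvee_j f(x_j)$. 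Hence $f([a,b])$ is upward directed, and there is a net $(y_\alpha)$ with $f(y_\alpha)\uparrow M$.

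\textbf{Step 3: extract a convergent net (main obstacle).} The difficulty is that $[a,b]$ is not compact in any useful sense, so I cannot simply take a convergent subnet of $(y_\alpha)$. My plan is to bypass this with the Intermediate Value Theorem.

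For each $\eps$ in a set decreasing to $0$, I would define $g(x) := \sup_{t \in [a,x]} f(t)$, using a bandwise meaning of $[a,x]$. I would then show that $g$ is increasing, order continuous and locally band preserving, with $g(b)=M$.

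Alternatively, and more concretely, I would use a bisection again. Keep the half of the interval on which the supremum of $f$ equals $M$. Bandwise, both halves may achieve this, so I decompose $E$ into $B_{\sup_L = M}$ and its complement and pick the half accordingly. This yields nested intervals $[c_n,d_n]$ with $\sup f([c_n,d_n])=M$ and $d_n-c_n=(b-a)2^{-n}$. Their common limit $d$ then satisfies $f(d)=M$. To see this, order continuity at $d$ gives, for each $\eps\in\Eps$, some $n$ with $\abs{f(x)-f(d)}\le\eps$ on $[c_n,d_n]$, using $\abs{x-d}\le d_n-c_n$ and the $\Delta\downarrow 0$ formulation. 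Consequently $M\le f(d)+\eps$ for all $\eps$, so $M \le f(d)$.

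The delicate point is verifying that the bandwise choice of halves keeps the intervals nested and keeps the supremum equal to $M$ after gluing. This uses part (i) of the preceding proposition and is where most of the care goes.
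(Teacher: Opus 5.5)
The paper does not prove this statement; it imports it from \cite[Theorem 5.5]{paper1}. I have therefore judged your argument on its own. Your conclusion is correct and your Step 3 (bisection variant) is right, but Step 1 has a gap as written, with an easy repair.

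\textbf{Step 3.} Once boundedness is known, the bisection variant already proves that the supremum is attained. The points you flag all follow from part (i) of the proposition recalled from \cite[Proposition 4.12]{paper1}. Write $s_L$ and $s_R$ for the suprema of $f$ on $[c_n,m]$ and $[m,d_n]$. Applying part (i) on $B_{x\le m}$ and on $B_{m<x}$ gives $M=s_L\vee s_R$, so $s_R=M$ on the complement of $B_{s_L=M}$. With $\bP=\bP_{s_L=M}$, set $c_{n+1}=\bP(c_n)+\bP^d(m)$ and $d_{n+1}=\bP(m)+\bP^d(d_n)$. These intervals are nested and have length $2^{-n-1}(b-a)$, and part (i) again gives $\sup f([c_{n+1},d_{n+1}])=M$. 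Order continuity at $d=\sup_n c_n=\inf_n d_n$ then yields $M\le f(d)+\eps$ for every $\eps$, hence $f(d)=M$. Step 2 (directedness and the net $f(y_\alpha)\uparrow M$) and the $g(x)$ alternative are never used and can be deleted.

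\textbf{Step 1.} Your ``maximal band on which $f$ is bounded'' rests on gluing bounds. Suppose $\bP_\lambda(\abs{f})\le K_\lambda$ on disjoint bands $B_\lambda$. To glue, you need $\bigvee_\lambda K_\lambda$ to exist in $E$, and Dedekind completeness does not provide it: a disjoint family of positive elements need not be order bounded (e.g.\ $\{ne_n\}$ in $\ell^\infty$). This can be rescued by gluing near-maximising points instead of bounds and then using local band preservation, but that is a separate argument you have not given.

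The bandwise bisection in Step 1 has further problems. Choosing halves bandwise needs the same infinite gluing. The invariant the bisection is meant to preserve is never stated. And order continuity at $x_0$ yields a bound only on some $[c_N,d_N]$, not on $[a,b]$, so ``contradicts maximality'' does not follow as written.

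\textbf{Repair.} Drop the bands from Step 1 entirely. Suppose $\abs{f}\le K_1$ on $[c,m]$ and $\abs{f}\le K_2$ on $[m,d]$, and let $x\in[c,d]$. Local band preservation gives $\bP_{x\le m}(f(x))=\bP_{x\le m}(f(x\wedge m))$ and $\bP_{m<x}(f(x))=\bP_{m<x}(f(x\vee m))$, so $\abs{f}\le K_1\vee K_2$ on $[c,d]$. Hence if $f$ is unbounded on $[c_n,d_n]$, it is unbounded on one of the two halves, and you may choose that half globally. The nested intervals shrink to some $x_0$. Order continuity at $x_0$ with $\Delta=\{2^{-n}(b-a)\}$ then gives $\abs{f}\le\abs{f(x_0)}+\eps$ on some $[c_N,d_N]$, a contradiction. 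With this Step 1, your Steps 1 and 3 together prove the theorem.
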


\subsection{Order differentiability and super order differentiability}

There are two natural ways of defining derivatives in the vector lattice setting that turn out to be related, but not equivalent. These derivatives were first studied in \cite{differentiation}, and then further investigated in \cite{paper1}. Between those two papers, there is a minor difference in whether $r \gg 0$ means $r$ is a positive invertible element or a weak order unit---we use the latter, and note that the results in \cite{differentiation} are still true. To obtain uniqueness of order derivatives with this change, it was necessary to assume $E$ satisfies the projection property in \cite[Proposition 3.7]{paper1}.

\begin{definition} \label{d:order_differentiable}
Let $E$ be a $\Phi$-algebra with the projection property. A function $f \colon \dom(f) \to E$ is called \emph{order differentiable at $c \in \dom(f)$} with derivative $f'(c)$ if there exists an $r \gg 0$ such that $(c - r, c + r) \subseteq \dom(f)$, and for every $\Delta \downarrow 0$ there exists an $\Eps \searrow 0$ with the property that for all $\eps \in \Eps$ there exists a $\delta \in \Delta$ satisfying
\[ x \in (c - r, c+ r) \text{ and } \abs{x - c} \leq \delta \implies \abs{f(x) - f(c) - (x - c) f'(c)} \leq \abs{x - c} \eps. \]
\end{definition}

\begin{definition} \label{d:super_order_differentiabile}
Let $E$ be a $\Phi$-algebra with the projection property. A function $f \colon \dom(f) \to E$ is called \emph{super order differentiable at $c\in \dom(f)$} with derivative $f'(c)$ if there exists an $r \gg 0$ such that $(c - r, c + r) \subseteq \dom(f)$, and for every $\Delta \downarrow 0$ there exists an $\Eps \searrow 0$ with the property that for all $\eps \in \Eps$ there exists a $\delta \in \Delta$ satisfying
\[ x \in \dom(f) \text{ and } \abs{x - c} \leq \delta \implies \abs{f(x) - f(c) - (x - c) f'(c)} \leq \abs{x - c} \eps. \]
\end{definition}

\begin{remark}
The above definitions differ only in whether the implication is required to hold only for $x \in (c - r, c + r)$ or for all $x \in \dom(f)$. These are not equivalent as it is possible for a net $(x_\alpha)$ to converge to $c$ without ever entering $(c - r, c + r)$. It is immediate from the definitions that $f$ is order differentiable at $c$ if and only if there exists an $r \gg 0$ such that $(c - r, c + r) \subseteq \dom(f)$ and the restriction $f \vert_{(c - r, c + r)}$ is super order differentiable at $c$.
\end{remark}

Order derivatives are always unique, see \cite[Remark 3.3]{differentiation} and \cite[Proposition 3.7]{paper1}. We also have available the sum and product rules for order differentiability from \cite[Theorems 3.15 and 4.3]{differentiation}, where the reader may also find chain and quotient rules.

\begin{proposition}\label{p:diff}
Let $E$ be a $\Phi$-algebra with the projection property. Suppose $f \colon \dom(f) \to E$ and $g \colon \dom(g) \to E$ are (super) order differentiable at $c \in \dom(f) \cap \dom(g)$. Then
\begin{enumerate}[(i)]
\item $f+g$ is (super) order differentiable at $c$ with $$(f+g)'(c) = f'(c) + g'(c),$$ and
\item $fg$ is (super) order differentiable at $c$ with $$(fg)'(c) = f'(c)g(c) + f(c)g'(c).$$
\end{enumerate}
\end{proposition}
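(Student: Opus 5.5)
We prove the sum rule (i) and the product rule (ii) directly from \Cref{d:order_differentiable} (respectively \Cref{d:super_order_differentiabile}), in the same spirit as \cite[Theorems 3.15 and 4.3]{differentiation}.

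\textbf{Choice of $r$ and $\Eps$.} Take the radii $r_f, r_g \gg 0$ given by the two definitions and set $r := r_f \wedge r_g$. This is again a weak order unit, since a weak order unit is characterised by having trivial disjoint complement, and $\{r_f \wedge r_g\}^d = \{r_f\}^d \cup \{r_g\}^d$ for positive elements, so the complement is $\{0\}$. Moreover $(c-r,c+r) \subseteq \dom(f)\cap\dom(g)$. Fix $\Delta \downarrow 0$, and let $\Eps_f, \Eps_g \searrow 0$ be the sets supplied for $f$ and $g$ with this $\Delta$. For the sum we take
\[ \Eps := \{\eps_1 + \eps_2 : \eps_1 \in \Eps_f,\ \eps_2 \in \Eps_g\}. \]
Its infimum is $0$ because $\inf(\Eps_f + \Eps_g) = \inf \Eps_f + \inf \Eps_g$ in a vector lattice. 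Given $\eps_1 + \eps_2 \in \Eps$, choose $\delta_1, \delta_2 \in \Delta$ for $\eps_1, \eps_2$, and use directedness of $\Delta$ to find $\delta \in \Delta$ with $\delta \leq \delta_1 \wedge \delta_2$. The triangle inequality then gives the required estimate on $(c-r,c+r)$ (respectively on $\dom(f)\cap\dom(g)$).

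\textbf{Product rule.} We write
\[ fg(x) - fg(c) - (x-c)\bigl(f'(c)g(c) + f(c)g'(c)\bigr) = \bigl[f(x)-f(c)-(x-c)f'(c)\bigr]g(x) + (x-c)f'(c)\bigl(g(x)-g(c)\bigr) + f(c)\bigl[g(x)-g(c)-(x-c)g'(c)\bigr]. \]
Using $\abs{uv} = \abs{u}\abs{v}$ in an $f$-algebra, the first and third terms are bounded by $\abs{x-c}\,\eps_1\abs{g(x)}$ and $\abs{x-c}\,\abs{f(c)}\,\eps_2$. The second term is $\abs{x-c}\,\abs{f'(c)}\,\abs{g(x)-g(c)}$. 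So we need two facts about $g$ near $c$: an order bound on $\abs{g(x)}$, and control of $\abs{g(x)-g(c)}$ by elements with infimum $0$. Both follow from differentiability of $g$. Indeed,
\[ \abs{g(x)-g(c)} \leq \abs{x-c}\bigl(\abs{g'(c)} + \eps_2\bigr) \leq \delta\bigl(\abs{g'(c)} + \eps_2\bigr), \]
so for $\delta \leq \delta_2$ we get $\abs{g(x)} \leq \abs{g(c)} + \delta_2(\abs{g'(c)} + \eps_2)$ with fixed elements on the right. For the product we then take $\Eps$ to consist of elements of the form
\[ \eps_1\Bigl(\abs{g(c)} + \delta_0\bigl(\abs{g'(c)}+\eps_2\bigr)\Bigr) + \abs{f(c)}\,\eps_2 + \abs{f'(c)}\,\delta'\bigl(\abs{g'(c)}+\eps_2\bigr), \]
with $\eps_1 \in \Eps_f$, $\eps_2 \in \Eps_g$ and $\delta' \in \Delta$, where $\delta_0 \in \Delta$ is a fixed element chosen once at the start. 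For each such element we pick $\delta \in \Delta$ below $\delta_0 \wedge \delta' \wedge \delta_1 \wedge \delta_2$.

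\textbf{Main obstacle.} The main obstacle is showing this $\Eps$ has infimum $0$. It relies on two facts:
\begin{enumerate}[(i)]
\item multiplication by a fixed positive element is order continuous in an Archimedean $f$-algebra, so $\inf_{\eps \in \Eps_f} w\eps = 0$ for $w \geq 0$;
\item $\inf_{\delta' \in \Delta} \delta'\bigl(\abs{g'(c)} + \eps_2\bigr) = 0$, which also needs care because $\eps_2$ varies.
\end{enumerate}
For (ii) I would first try replacing $\eps_2$ by a fixed $\eps_2^0 \in \Eps_g$ in that term: we may restrict to $\eps_2 \leq \eps_2^0$, since the elements of $\Eps_g$ below $\eps_2^0$ still have infimum $0$. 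The infimum of the sum of the three nonnegative families, each with infimum $0$, can then be handled coordinatewise as in the sum rule.
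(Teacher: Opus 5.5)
Your sum rule and your three-term decomposition of the product are correct, and this is the standard route. The paper gives no proof of its own here and only cites \cite[Theorems 3.15 and 4.3]{differentiation}. The gap is in the step you use to resolve your ``main obstacle''. The sets $\Eps_f,\Eps_g$ supplied by the definition are only assumed to satisfy $\Eps\searrow 0$, not $\Eps\downarrow 0$. For a non-directed set with infimum $0$, the elements lying below a fixed $\eps_2^0$ need not have infimum $0$. For instance, for $g(x)=x$ on $\bR^2$ the error term vanishes, so $\Eps_g=\{(1,0),(0,1)\}$ is an admissible choice. Its only element below $\eps_2^0=(1,0)$ is $(1,0)$ itself. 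So ``we may restrict to $\eps_2\le\eps_2^0$'' fails as stated, and with it your proof that $\inf\Eps=0$. The same issue affects your fact (i). Order continuity of $y\mapsto wy$ only controls directed sets; for an arbitrary $\Eps\searrow 0$ you need that multiplication by $w\ge 0$ is a lattice homomorphism in an Archimedean $f$-algebra. Then $\{w\eps\}$ has the same lower bounds as the directed set $\{w(\eps_1\wedge\cdots\wedge\eps_n)\}$, which decreases to $0$.

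The repair is routine. One option is to first replace $\Eps_g$ by its set of finite infima. This is admissible: if $\delta_1,\dots,\delta_n$ serve $\eps_1,\dots,\eps_n$, then any $\delta\in\Delta$ below all of them serves $\bigwedge_i\eps_i$, because $\abs{x-c}\bigwedge_i\eps_i=\bigwedge_i\abs{x-c}\eps_i$. A simpler option is to decouple the two roles of $\eps_2$. Fix once and for all $\eps_2^0\in\Eps_g$ with its $\delta_2^0\in\Delta$, and put $W:=\abs{g'(c)}+\eps_2^0$ and $K:=\abs{g(c)}+\delta_2^0W$. Then $\abs{g(x)-g(c)}\le\abs{x-c}W$ and $\abs{g(x)}\le K$ whenever $\abs{x-c}\le\delta_2^0$ and $x$ lies in the relevant domain. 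Now take
\[ \Eps:=\{\eps_1K+\abs{f(c)}\eps_2+\abs{f'(c)}W\delta' : \eps_1\in\Eps_f,\ \eps_2\in\Eps_g,\ \delta'\in\Delta\}, \]
and choose $\delta\in\Delta$ below $\delta_1,\delta_2,\delta',\delta_2^0$. The three families now have independent indices and fixed multipliers, so $\inf\Eps$ is the sum of the three infima, each of which is $0$. A smaller slip: the identity $\{r_f\wedge r_g\}^d=\{r_f\}^d\cup\{r_g\}^d$ is false in general, although your conclusion is right. Argue instead that $\abs{w}\wedge r_f\wedge r_g=0$ gives $\abs{w}\wedge r_f\in\{r_g\}^d=\{0\}$, hence $w\in\{r_f\}^d=\{0\}$.
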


A function being super order differentiable on an interval automatically makes it locally band preserving, see \cite[Theorem 4.9 and Corollary 4.10]{paper1}. Additionally, it was shown in \cite[Proposition 4.11]{paper1} that if $f \colon (a, b) \to E$ is order differentiable and locally band preserving, then $f'$ is necessarily locally band preserving too.

We now state a version of the Mean Value Theorem, which is \cite[Theorem 5.8]{paper1} along with the natural corollary of when it can be used to say that a zero derivative implies a function is constant. The assumption that the function is locally band preserving cannot be dropped (cf. \cite[Example 5.11]{paper1}).

\begin{theorem}[Mean Value Theorem] \label{t:MVT}
Let $E$ be a Dedekind complete $\Phi$-algebra, and suppose $a \ll b$. Let $f \colon [a, b] \to E$ be order continuous on $[a, b]$, order differentiable on $(a, b)$, and locally band preserving. For any $x, y \in [a, b]$, there exists $c \in [x \wedge y, x \vee y]$ such that
\[ (y - x) f'(c) = f(y) - f(x). \]
\end{theorem}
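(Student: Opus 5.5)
The plan is the classical route: reduce to a Rolle-type statement for an auxiliary function, prove that Rolle statement with the Extreme Value Theorem (\Cref{t:EVT}), and glue local information together band by band. To avoid dividing by $y-x$, which need not be invertible, set
\[ h(t) \defeq (y-x) f(t) - (f(y) - f(x))\, t, \qquad t \in [a,b]. \]
Then $h(x) = h(y) = y f(x) - x f(y)$. Since multiplication by a fixed element of an $f$-algebra is band preserving, $h$ is locally band preserving and order continuous. By \Cref{p:diff}, $h$ is order differentiable on $(a,b)$ with
\[ h'(t) = (y-x) f'(t) - (f(y)-f(x)). \]
It therefore suffices to find $c \in [x\wedge y, x \vee y]$ with $h'(c) = 0$. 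We also use that $h'$ is locally band preserving, by \cite[Proposition 4.11]{paper1}.

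First I would split $E = B_{x<y} \oplus B_{y<x} \oplus B_{x=y}$. On $B_{x=y}$ the desired identity $\bP(y-x) f'(c) = \bP(f(y)-f(x))$ reads $0=0$ for any $c$. By symmetry the other two bands are handled alike. I would then apply \Cref{t:EVT} to $h$ restricted to $[u,v] \defeq [x\wedge y, x\vee y]$, which yields $p, q \in [u,v]$ with $h(q) \le h(t) \le h(p)$ for all $t \in [u,v]$. Next I would decompose further by the bands $B_{h(x) < h(p)}$, $B_{h(q) < h(x)}$ and the remaining band $B_3$, on which $h(p) = h(q) = h(x)$.

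\textbf{Maximum band.} On $B_1 = B_{h(x)<h(p)} \cap B_{u<v}$, the point $p$ must satisfy $\bP_1 u \ll \bP_1 p \ll \bP_1 v$ inside $B_1$ (after possibly shrinking $B_1$ via local band preservation). Otherwise, on some subband $p$ coincides with an endpoint, where $h$ equals $h(x)$, which contradicts strictness. I would then run the classical one-sided argument. Take $\Delta \downarrow 0$ consisting of small multiples of a weak unit of $B_1$, and compare $h(p \pm s) \le h(p)$ for small $s$ in the band. Order differentiability then gives $\pm s\, h'(p) \le s\,\eps$ on $B_1$, hence $\bP_1\abs{h'(p)} \le \bP_1 \eps$ for all $\eps \in \Eps$, so $\bP_1 h'(p) = 0$.

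\textbf{Minimum band.} The same argument with $q$ on $B_2$ gives $\bP_2 h'(q) = 0$.

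\textbf{Constant band.} On $B_3$, part (i)/(ii) of the proposition on locally band preserving functions shows that $\bP_3 h$ is constant on $\bP_3[u,v]$. Taking the midpoint $m = (u+v)/2$, the derivative $\bP_3 h'(m)$ vanishes directly from the definition.

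Finally, define $c = \bP_1 p + \bP_2 q + \bP_3 m + \bP_{x=y}\, u$, which lies in $[u,v]$. Because $h'$ is locally band preserving, $\bP_i h'(c)$ equals the corresponding $\bP_i h'(\cdot)$ computed above, so $h'(c) = 0$.

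I expect the main obstacle to be the interior/boundary issue on the maximum and minimum bands. Order differentiability is only available at points of $(a,b)$ and only along perturbations inside $(c-r,c+r)$. One must therefore justify that $p$ lies strictly inside $[u,v]$ on $B_1$ in the $\ll$ sense relative to that band, and that the test points $p \pm s$ can be chosen inside both $[u,v]$ and $(p-r,p+r)$ while remaining admissible for the global definition. I would handle this by building $s$ as $\bP_1$ of a small multiple of $(p-u)\wedge(v-p)\wedge r$ and extending by $0$ off $B_1$, using local band preservation to ignore the complement. If weak units inside bands cause trouble, I would refine the decomposition further using the Dedekind completeness of $E$.
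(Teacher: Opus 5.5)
The paper does not prove this theorem. It quotes it from \cite[Theorem 5.8]{paper1}, so there is no in-text argument to compare against. Judged on its own, your Rolle-type route is correct in substance: the auxiliary $h$, the Extreme Value Theorem on $[u,v]=[x\wedge y,x\vee y]$, a split into a strict-maximum band, a strict-minimum band and a band on which $h$ is constant, then gluing $c$ via local band preservation of $h'$. Your interior claim on $B_1$ even holds without shrinking. Local band preservation gives $h(u)=h(v)=h(x)$. So on any nonzero subband $C\subseteq B_1$ with $\bP_C p=\bP_C u$ (or $\bP_C p=\bP_C v$) you would get $\bP_C h(p)=\bP_C h(x)$, contradicting that $\bP_1(h(p)-h(x))$ is a weak unit of $B_1$.

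Some details do need repair.

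\textbf{Overlapping bands.} $B_{h(x)<h(p)}$ and $B_{h(q)<h(x)}$ can overlap, and then $\bP_1 p+\bP_2 q$ need not lie in $[u,v]$. Take $B_2:=B_{h(q)<h(x)}\cap B_1^d$ and $B_3:=(B_1\oplus B_2)^d$. On $B_3$ you get $h(p)=h(q)=h(x)$ because $h(q)\le h(x)\le h(p)$.

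\textbf{Base points outside $(a,b)$.} The points $p$, $q$ and $m$ are interior only relative to their own bands. Elsewhere they may touch $a$ or $b$ (for instance on $B_{x=y}$), so $h'(p)$ need not even be defined. Replace $p$ by $\tilde p:=\bP_1 p+\bP_1^d(\tfrac12(a+b))\in(a,b)$, which satisfies $\bP_1 h(\tilde p)=\bP_1 h(p)$. Only then take $r$ from the differentiability of $h$ at $\tilde p$. Your construction of $s$ does not address this, since $r$ depends on the base point.

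\textbf{Choice of $s$.} Instead of an unspecified small multiple, put $g:=\tfrac12\bP_1((p-u)\wedge(v-p)\wedge r)$, $\Delta:=\{\tfrac1n g:n\in\bN\}$ and $s:=\delta$. Then $\tilde p\pm s\in(\tilde p-r,\tilde p+r)$ and $\bP_1(\tilde p\pm s)\in[\bP_1 u,\bP_1 v]$. Local band preservation gives $\bP_1 h(\tilde p\pm s)\le\bP_1 h(\tilde p)$, hence $\pm s\,h'(\tilde p)\le s\eps$ on $B_1$. This yields $\bP_1\abs{h'(\tilde p)}\le\bP_1\eps$, because $s$ is a positive weak unit of $B_1$ and a unital $f$-algebra is semiprime ($s w^+=0$ forces $\bP_1 w^+=0$). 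The same device handles $B_2$ and $B_3$.

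\textbf{The band $B_{x=y}$.} There the statement forces $c$ to agree with $x$, which need not lie in $(a,b)$ (e.g.\ $x=y=a$). So $f'(c)$ may be undefined on that band. This is a wrinkle in the statement as quoted (the identity is just $0=0$ on that band), not a flaw in your argument.
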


\begin{corollary}
Let $E$ be a Dedekind complete $\Phi$-algebra. Let $f \colon (a, b) \to E$ be locally band preserving and order differentiable with $f' = 0$. Then $f$ is constant.
\end{corollary}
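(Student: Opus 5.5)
Fix $x, y \in (a, b)$; the aim is to show $f(x) = f(y)$. The plan is to apply the Mean Value Theorem (\Cref{t:MVT}) to the restriction of $f$ to a closed subinterval of $(a,b)$ that contains both $x$ and $y$. The theorem then gives some $c \in [x \wedge y, x \vee y]$ with
\[ f(y) - f(x) = (y - x) f'(c) = 0, \]
since $f'(c) = 0$.

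First, set $u = x \wedge y$ and $v = x \vee y$. Because $(a,b)$ is defined through the relation $\ll$, both $x - a$ and $y - a$ are weak order units. In an Archimedean vector lattice the infimum of two weak order units is again a weak order unit (indeed $(p\wedge q)^{\perp} = p^{\perp}\cup q^{\perp}$ generated band considerations give $\{p\wedge q\}^{dd} = \{p\}^{dd}\cap\{q\}^{dd} = E$). Hence $u - a = (x-a)\wedge(y-a)$ is a weak order unit, so $a \ll u$. Symmetrically, $b - v = (b-x)\wedge(b-y)$ is a weak order unit, so $v \ll b$.

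Second, the MVT needs the endpoints of the closed interval to satisfy a strict relation, so I will enlarge $[u,v]$ slightly. I take $u' = u - \tfrac12(u-a)$ and $v' = v + \tfrac12(b-v)$. Then $v' - u' = (v-u) + \tfrac12(u-a) + \tfrac12(b-v)$ is a weak order unit, so $u' \ll v'$. Moreover $u' - a = \tfrac12(u-a)$ and $b - v' = \tfrac12(b-v)$ are weak order units, so $[u', v'] \subseteq (a,b)$; if $u = v$ this still produces a nondegenerate interval.

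Third, I check the hypotheses of \Cref{t:MVT} for $g = f\vert_{[u',v']}$:
\begin{itemize}
\item $g$ is locally band preserving, because restrictions of locally band preserving functions keep that property.
\item $g$ is order differentiable on $(u',v')$. For $c \in (u',v')$ the interval $(u',v')$ is a neighbourhood of the required form inside $[u',v']$, after shrinking the radius $r$ to $r \wedge (c-u') \wedge (v'-c)$, which is again a weak order unit. The derivative is still $0$.
\item $g$ is order continuous on $[u',v']$. Every point $c$ of $[u',v']$ lies in $(a,b)$, and order differentiability of $f$ at $c$ gives order continuity at $c$ in the $\eps$--$\delta$ form of \Cref{p:eps_delta_continuity}.
\end{itemize}

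The main obstacle is this last point: differentiability only controls $x$ inside $(c - r, c + r)$, whereas continuity on $[u',v']$ must handle nets that may never enter that open interval. I would address this with local band preservation. For $x$ close to $c$, decompose $E$ via the bands on which $|x - c| \le r/2$ holds. Using part (iii) of the proposition on locally band preserving functions, one transfers the estimate $|f(x)-f(c)| \le |x-c|(|f'(c)|+\eps)$ bandwise. The part of $x-c$ lying outside these bands is controlled by $\delta$, and $\delta$ becomes small relative to $r$. If this bandwise argument turns out to be delicate, I would instead cite the fact from \cite{paper1} that a locally band preserving, order differentiable function is order continuous.
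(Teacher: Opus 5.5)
Your route is the one the paper intends: it presents the corollary as an immediate consequence of \Cref{t:MVT}, without further argument. Your lattice bookkeeping is also correct. The element $u-a=(x-a)\wedge(y-a)$ is a weak order unit, you have $[u,v]\subseteq(u',v')\subseteq[u',v']\subseteq(a,b)$, and shrinking the radius to $r\wedge(c-u')\wedge(v'-c)$ transfers order differentiability. The gap is the remaining hypothesis of \Cref{t:MVT}: order continuity of $f$ on $[u',v']$, which you do not establish.

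Your bandwise sketch handles only half of it. Let $\bP$ be the projection onto $B_{\abs{x-c}<r}$. Then $\tilde x=\bP(x)+\bP^d(c)$ lies in $(c-r,c+r)$, and local band preservation gives $\bP\abs{f(x)-f(c)}\le\bP\bigl(\abs{x-c}(\abs{f'(c)}+\eps)\bigr)$. On the complementary band, however, you only know $\bP^d(r)\le\bP^d\abs{x-c}\le\bP^d(\delta)$. That is information about $x-c$; order differentiability at $c$ says nothing about $f(x)-f(c)$ there. Part (iii) of the proposition on locally band preserving functions does not help, because its hypothesis is exactly such an estimate on the whole domain. These complementary bands lie in $B_{r\le\delta}$, which decreases to $\{0\}$ as $\delta\downarrow 0$. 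But shrinking supports give order convergence only if $f(x)-f(c)$ is dominated there by a fixed element of $E$.

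Your sketch also never uses that $[u',v']$ is closed and sits strictly inside $(a,b)$. So it would equally prove that $f$ is order continuous on all of $(a,b)$, and that is false; the same counterexample refutes the fallback fact you propose to cite. In $E=\bR^{\bN}$, take the coordinatewise reciprocal $f(x)=x^{-1}$ on $(0,\mathbf{1})$. It is locally band preserving and order differentiable at every $c$: with $r=\frac12(c\wedge(\mathbf 1-c))$ one has $\abs{f(x)-f(c)+(x-c)c^{-2}}\le 2c^{-3}\abs{x-c}^2$. Now index a net by $\bN\times\bN$ with the product order, with $n$-th coordinate $\frac12$ for $n\le m$ and $\frac1{k+2}$ for $n>m$. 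This net converges in order to $\frac12\mathbf 1$, but its image under $f$ is not eventually order bounded.

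What is missing is a proof that $f$ is order bounded on $[u',v']$, say by $K$. Given that, for $x\in[u',v']$ with $\abs{x-c}\le\delta$ you get $\abs{f(x)-f(c)}\le\delta(\abs{f'(c)}+\eps)+2\bP_{r\le\delta}(K)$, and $\bP_{r\le\delta}\downarrow 0$ yields order continuity. The boundedness is a genuine compactness statement and must be proved; it does not follow from the estimate at the single point $c$. One way is a bandwise version of the classical argument. Take the local bounds that differentiability supplies at each point, and set $s=\sup\{t\in[u',v']: f \text{ is order bounded on } [u',t]\}$. Then use Dedekind completeness and local band preservation to glue bounds, and near-maximisers, over disjoint bands.
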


\section{Construction of the Riemann integral}

\begin{notation}
Throughout this section and the remainder of the paper, $E$ always denotes a Dedekind complete $f$-algebra.
\end{notation}

We define a Riemann integral for order bounded functions $f \colon [a, b] \to E$, where $[a, b]$ is an order interval in $E$. At first, we use the Darboux sum construction of the integral, but it is shown in \Cref{p:riemann_sum_characterization} that the Riemann sum construction is equivalent.

\begin{definition}
A \emph{partition} $P$ of $[a, b]$ is a finite, totally ordered subset of $[a, b]$ containing $a$ and $b$, i.e.,
\[ P = \{ a = x_0 \leq x_1 \leq \cdots \leq x_n = b \}. \]

For any subinterval $I$ of $[a, b]$, we define
\[ m_I := \inf_{x \in I} f(x) \quad \text{and} \quad M_I := \sup_{x \in I} f(x). \]
If we need to specify the function under consideration, we will write $m_I(f)$ and $M_I(f)$, and when considering a partition $P$ as above, we write $m_i := m_{[x_{i-1}, x_i]}$ and $M_i := M_{[x_{i-1}, x_i]}$. 

Let $f \colon [a, b] \to E$ be a bounded function. Then the \emph{lower sum} $L(f, P)$ and the \emph{upper sum} $U(f, P)$ of $f$ with respect to $P$ are given by
\[ L(f, P) := \sum_{i=1}^n m_i(x_i - x_{i-1}) \quad \text{and} \quad U(f, P) := \sum_{i=1}^n M_i(x_i - x_{i-1}). \]

The \emph{lower integral} $L(f)$ and the \emph{upper integral} $U(f)$ of $f$ are defined as
\begin{align*}
L(f) & := \sup \{L(f, P) \colon P \text{ is a partition of } [a, b] \} \text{ and} \\
U(f) & := \inf \{U(f, P) \colon P \text{ is a partition of } [a, b] \}.
\end{align*}
If we wish to specify the interval, we write $L(f, [a, b])$ and $U(f, [a, b])$.

We say that $f$ is \emph{(Darboux) integrable} if $U(f) = L(f)$. In this case, we define the \emph{(Darboux) integral} of $f$ over $[a, b]$ to be
\[ \int_a^b f(x) dx := U(f) = L(f). \]
\end{definition}

We interest ourselves only in the integration of locally band preserving functions $f$. The decision to do so is motivated by the fact that a function can have a zero order derivative and yet not be constant---an example is provided in \cite[Example 5.11]{paper1}---unless the function is locally band preserving. Thus, in order to have a Fundamental Theorem of Calculus, assuming $f$ is locally band preserving is necessary. The following example exhibits some other problems that occur without this assumption.

\begin{example} \label{ex: swap-function}
Let $f \colon [(0, 0), (1, 1)] \to \bR^2$ be given by $f(x, y) := (y, x)$.
Let $P := \{ (0, 0), (1, 0), (1, 1) \}$ and $Q := \{ (0, 0), (0, 1), (1, 1) \}$. Then, $P \cup Q = \{ (0, 0), (1, 0), (0, 1), (1, 1) \}$ is not a partition of $[a, b]$ since $(1, 0)$ and $(0, 1)$ are incomparable. Since $L(f, P) = (0, 1)$ and $U(f, Q) = (1, 0)$, we see that $L(f, P) \not\leq U(f, Q)$. This also means $f$ is not integrable.
\end{example}

Note that in this example, $f$ is a uniformly continuous function and yet not integrable. Because $E$ is not totally ordered, $P \cup Q$ need not be a partition of $[a, b]$, and then one cannot utilise the classical argument:
\[ L(f, P) \leq L(f, P \cup Q) \leq U(f, P \cup Q) \leq U(f, Q). \]

This problem can be addressed by taking the total orderisation of $P \cup Q$ as a ``common refinement'' of $P$ and $Q$. However, this approach is only sensible when $f$ is locally band preserving, because the following two lemmas are needed to find a relationship between $L(f, P)$ and $L(f, Q)$, and $L(f, \totord(P \cup Q))$---and between the respective upper sums.

\begin{lemma} \label{l:lbp_extrema_preserved}
Let $f \colon [a \wedge c, b \vee d] \to E$ be order bounded and locally band preserving with $a \leq b$ and $c \leq d$, and let $\bP$ be a band projection.
\begin{enumerate}[(i)]
\item If $[\bP(a), \bP(b)] \subseteq [\bP(c), \bP(d)]$, then
\[ \bP(m_{[c, d]}) \leq \bP(m_{[a, b]}) \quad \text{and} \quad \bP(M_{[a, b]}) \leq \bP(M_{[c, d]}). \]
\item If $\bP(a) = \bP(c)$ and $\bP(b) = \bP(d)$, then
\[ \bP(m_{[a, b]}) = \bP(m_{[c, d]}) \quad \text{and} \quad \bP(M_{[a, b]}) = \bP(M_{[c, d]}). \]
\end{enumerate}
\end{lemma}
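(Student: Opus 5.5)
Part (ii) follows from applying (i) twice. If $\bP(a)=\bP(c)$ and $\bP(b)=\bP(d)$, then $[\bP(a),\bP(b)] = [\bP(c),\bP(d)]$, so (i) gives each inequality in both directions. It therefore suffices to prove (i). Moreover, the statement for $m$ follows from the statement for $M$ applied to $-f$, which is again order bounded and locally band preserving. So the plan is to prove $\bP(M_{[a,b]}) \leq \bP(M_{[c,d]})$ under the hypothesis $[\bP(a),\bP(b)] \subseteq [\bP(c),\bP(d)]$.

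Band projections in a Dedekind complete space are order continuous lattice homomorphisms, so $\bP(M_{[a,b]}) = \sup_{x \in [a,b]} \bP(f(x))$. It is therefore enough to show that $\bP(f(x)) \leq \bP(M_{[c,d]})$ for every $x \in [a,b]$.

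Fix $x \in [a,b]$. Then $\bP(c) \leq \bP(a) \leq \bP(x) \leq \bP(b) \leq \bP(d)$. The natural step is to invoke part (i) of the proposition from \cite[Proposition 4.12]{paper1} quoted in the preliminaries, with $M = M_{[c,d]}$ as an upper bound of $f$ on $[c,d]$. The only obstacle is that this proposition is stated for a function with domain $[a,b] \supseteq [c,d]$, whereas here the domain is $[a\wedge c, b\vee d]$. Both $[a,b]$ and $[c,d]$ lie in this larger interval, so we apply the proposition with the big domain and the subinterval $[c,d]$. It yields $\bP(f(x)) \leq \bP(M_{[c,d]})$ for all $x$ in the domain with $\bP(c)\le\bP(x)\le\bP(d)$, in particular for all $x\in[a,b]$.

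If one prefers not to rely on that proposition, the inequality can be obtained directly as the main step. Set $y := \bP(x) + \bP^d(c)$. Then $c \leq y \leq d$, because on $B$ we have $\bP(c)\le\bP(x)\le\bP(d)$ and on $B^d$ we have $\bP^d(c) \le \bP^d(c) \le \bP^d(d)$. Hence $y\in[c,d]$, and $\bP(y)=\bP(x)$. Local band preservation then gives $\bP(f(x)) = \bP(f(y)) \leq \bP(M_{[c,d]})$, using that $\bP$ is positive. Taking the supremum over $x \in [a,b]$ finishes (i). I expect no real difficulty here beyond checking that $y$ lies in the domain and in $[c,d]$.
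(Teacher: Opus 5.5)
Your proposal is correct and is essentially the paper's argument. The paper also uses order continuity of $\bP$ to write $\bP(m_{[a,b]})$ as an infimum of $\bP(f(x))$, and replaces each $x$ by the auxiliary point $\bP(x) + \bP^d(c)$, which is exactly your $y$, chosen so that it stays in $[a \wedge c, b \vee d]$. It then uses local band preservation to compare with the corresponding infimum over $[c,d]$, and deduces (ii) by applying (i) twice. Your pointwise version, checking directly that $y \in [c,d]$, and your reduction of the $m$-case to the $M$-case via $-f$ are just a slightly more streamlined packaging of the same steps.
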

\begin{proof}
To prove (i), suppose that $[\bP(a), \bP(b)] \subseteq [\bP(c), \bP(d)]$. Then we have that
\begin{align*}
\bP(m_{[a, b]})
& = \bP \left( \inf \{ f(x): a \leq x \leq b \} \right) \\
& = \inf \{ \bP(f(x)): a \leq x \leq b \} \\
& = \inf \{ \bP(f(\bP(x) + \bP^d(c))): a \leq x \leq b \} \\
& = \inf \{ \bP(f(y + \bP^d(c))): \bP(a) \leq y \leq \bP(b)\} \\
& \geq \inf \{ \bP(f(y + \bP^d(c))): \bP(c) \leq y \leq \bP(d)\} \\
& = \bP(m_{[c, d]}),
\end{align*}
where the term $\bP^d(c)$ is added in the third step in order to guarantee that $\bP(x) + \bP^d(c) \in [a\wedge c, b \vee d]$. The final equality follows by reversing the previous steps with $a$ and $b$ interchanged with $c$ and $d$, respectively. The proof for the suprema is similar, and (ii) follows immediately by applying (i) twice.
\end{proof}

\begin{lemma} \label{l:darboux_sum_subset_inequality}
Let $f \colon [a, b] \to E$ be order bounded and locally band preserving. If $P$ and $Q$ are partitions of $[a, b]$ with $\bP(P) \subseteq \bP(Q)$ for some band projection $\bP$, then
\[ \bP(L(f, P)) \leq \bP(L(f, Q)) \quad \text{and} \quad \bP(U(f, Q)) \leq \bP(U(f, P)). \]
\end{lemma}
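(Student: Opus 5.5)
Write $P = \{a = x_0 \leq \cdots \leq x_n = b\}$ and $Q = \{a = y_0 \leq \cdots \leq y_k = b\}$. The plan is to reduce the statement to the classical refinement argument inside the band $B$. The key point is that $\bP$ is a lattice homomorphism and, $E$ being an $f$-algebra, also multiplicative, so $\bP(uv) = \bP(u)\bP(v)$. Hence
\[ \bP(L(f,Q)) = \sum_{j=1}^k \bP(m_{[y_{j-1},y_j]})\,\bigl(\bP(y_j) - \bP(y_{j-1})\bigr), \]
and similarly for $P$. By \Cref{l:lbp_extrema_preserved}(ii), each $\bP(m_{[y_{j-1},y_j]})$ depends only on $\bP(y_{j-1})$ and $\bP(y_j)$. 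So $\bP(L(f,Q))$ is a function of the totally ordered chain $\bP(Q)$ alone; repeated points contribute zero because the width $\bP(y_j)-\bP(y_{j-1})$ vanishes.

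Since $\bP(P) \subseteq \bP(Q)$ and both sets are totally ordered chains from $\bP(a)$ to $\bP(b)$, each consecutive pair $\bP(x_{i-1}) \leq \bP(x_i)$ is split by the points of $\bP(Q)$ lying between them. For each $i$ I will choose indices $j_{i-1} \leq j_i$ with $\bP(y_{j_{i-1}}) = \bP(x_{i-1})$ and $\bP(y_{j_i}) = \bP(x_i)$, making the choice monotone in $i$ (possible because $\bP(Q)$ is a chain), with $j_0 = 0$ and $j_n = k$. Then
\[ \bP(x_i) - \bP(x_{i-1}) = \sum_{j=j_{i-1}+1}^{j_i} \bigl(\bP(y_j)-\bP(y_{j-1})\bigr). \]
For each $j$ in this range, $[\bP(y_{j-1}),\bP(y_j)] \subseteq [\bP(x_{i-1}),\bP(x_i)]$. \Cref{l:lbp_extrema_preserved}(i), applied with $[a,b] = [y_{j-1},y_j]$ and $[c,d] = [x_{i-1},x_i]$, gives $\bP(m_{[x_{i-1},x_i]}) \leq \bP(m_{[y_{j-1},y_j]})$. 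Multiplying by the positive element $\bP(y_j)-\bP(y_{j-1})$ preserves the inequality in an $f$-algebra. Summing over $j$ and then over $i$ yields $\bP(L(f,P)) \leq \bP(L(f,Q))$, because the blocks $j_{i-1}<j\le j_i$ partition $\{1,\dots,k\}$. The upper sums are handled the same way, using the reversed inequality for $M$.

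The main obstacle is the index bookkeeping. Distinct $y_j$ may have equal images under $\bP$, and the points of $P$ need not belong to $Q$, only their $\bP$-images; so the index sets must be chosen with care and must genuinely cover all $j$. I would settle this by taking $j_i$ to be the largest index with $\bP(y_{j_i}) = \bP(x_i)$, and setting $j_n = k$. Any leftover indices produce terms with $\bP(y_j) = \bP(y_{j-1})$, which vanish, so they can be assigned to any block harmlessly.
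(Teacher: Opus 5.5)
Your proof is correct and is essentially the paper's argument. Your indices $j_0 \leq \cdots \leq j_n$ select exactly the subpartition $R = \{y_{j_0}, \dots, y_{j_n}\} \subseteq Q$ with $\bP(R) = \bP(P)$ that the paper uses, and your blockwise use of \Cref{l:lbp_extrema_preserved}(i) fuses the paper's two steps (the refinement inequality $L(f,R) \leq L(f,Q)$ and the transfer $\bP(L(f,P)) = \bP(L(f,R))$ via part (ii)) into one computation in the band; your explicit monotone choice of the $j_i$ is slightly more careful than the paper, which just asserts that such a subpartition exists.
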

\begin{proof}
Suppose $P=\{x_0, \dots, x_n\}$. We prove the lower sum inequality as the proof of the upper sum inequality is similar. First suppose $P \subseteq Q$. Without loss of generality, we may assume that $Q$ has one element more than $P$. Say $Q = P \cup \{y\}$ with $x_{k-1} \leq y \leq x_k$ for some $y \in E$ and $1 \leq k \leq n$. Then,
\begin{align*}
m_{[x_{k-1}, x_k]}(x_k - x_{k-1})
& = m_{[x_{k-1}, x_k]}(y - x_{k-1}) + m_{[x_{k-1}, x_k]}(x_k - y) \\
& \leq m_{[x_{k-1}, y]}(y - x_{k-1}) + m_{[y, x_k]}(x_k - y),
\end{align*}
so that $L(f, P) \leq L(f, Q)$.

Now suppose that $\bP(P) \subseteq \bP(Q)$ for some band projection $\bP$. There exists a subpartition $R \defeq \{y_0, \dots, y_n\}$ of $Q$ such that $\bP(x_i) = \bP(y_i)$ for $1 \leq i \leq n$. It follows from the first part of this proof that $L(f, R) \leq L(f, Q)$. Therefore, by using \Cref{l:lbp_extrema_preserved}(ii),
\begin{align*}
\bP(L(f, P))
& = \bP \left( \sum_{i = 1}^n m_{[x_{i-1}, x_i]} (x_i - x_{i-1}) \right) = \bP \left( \sum_{i = 1}^n m_{[y_{i-1}, y_i]} (y_i - y_{i-1}) \right) \\
& = \bP(L(f, R)) \leq \bP(L(f, Q)).\qedhere
\end{align*}
\end{proof}

We now dedicate ourselves to turning the collection of all partitions of $[a, b]$ into a directed set such that $P, Q \preceq \totord(P \cup Q)$, which will allow us to see the integral as an order limit of Darboux sums.

\begin{definition}
Let $\cP$ denote the collection of all partitions of $[a, b]$. We define a preorder on $\cP$ by $P \preceq Q$  if and only if there exists a band decomposition $E = \bigoplus_{i = 1}^m B_i$ such that $\bP_i(P) \subseteq \bP_i(Q)$ for $1 \leq i \leq m$.
\end{definition}

\begin{remark}
Classically, partitions are often directed using the notion of mesh. This results in the same integral, but we prefer using $\preceq$ as it illustrates the difference between the classical case (where $\preceq$ reduces to $\subseteq$) and ours.
\end{remark}

\begin{lemma} \label{l:totord_union_is_refinement}
Let $f \colon [a, b] \to E$ be order bounded and locally band preserving. If $P$ and $Q$ are partitions of $[a, b]$, then $P, Q \preceq \totord(P \cup Q)$.
\end{lemma}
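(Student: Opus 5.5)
We need $P, Q \preceq \totord(P\cup Q)$. First, $\totord(P\cup Q)$ is a partition of $[a,b]$: it is finite and totally ordered by construction, so it remains to check that its elements lie in $[a,b]$ and that it contains $a$ and $b$.

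Write $A = P\cup Q = \{x_1,\dots,x_n\}$. Since every $x_j$ lies in $[a,b]$, and the lattice operations preserve order intervals, every $\cM_k(x_1,\dots,x_n)$ is a supremum of infima of elements of $[a,b]$ and so lies in $[a,b]$. For the endpoints:
\begin{itemize}
\item $\cM_1$ is the infimum of all elements, which is $a$, since $a\in A$ and $a\le x_j$ for every $j$.
\item $\cM_n = \bigvee_j x_j = b$, since $b\in A$ and $x_j\le b$ for every $j$.
\end{itemize}
Alternatively, this follows from \Cref{l:totord_decomposition}: on each band $B_i$ the set $\bP_i(\totord(A))$ equals $\bP_i(A)$, which has least element $\bP_i(a)$ and greatest element $\bP_i(b)$. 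Summing over $i$ shows that $a,b\in\totord(A)$ and that $\totord(A)\subseteq[a,b]$.

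Next, apply \Cref{l:totord_decomposition} to $A$. This gives a decomposition $E=\bigoplus_{i=1}^m B_i$ with $\bP_i(A)=\bP_i(\totord(A))$ for each $i$. Since $P\subseteq A$, it follows that
\[ \bP_i(P)\subseteq \bP_i(A)=\bP_i(\totord(P\cup Q)) \quad \text{for } 1\le i\le m, \]
which is exactly the definition of $P\preceq\totord(P\cup Q)$. The same decomposition works for $Q$.

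The only step needing any care is that $\totord(P\cup Q)$ is a partition, i.e.\ contains $a$ and $b$ and stays inside $[a,b]$; everything else is immediate from \Cref{l:totord_decomposition}. Note that the hypotheses that $f$ is order bounded and locally band preserving are not used here. They matter only later, when this lemma is combined with \Cref{l:darboux_sum_subset_inequality}.
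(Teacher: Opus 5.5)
Your proof is correct and follows the paper's route. The paper's proof is the one-line appeal to \Cref{l:totord_decomposition}, giving $\bP_i(P) \subseteq \bP_i(P \cup Q) = \bP_i(\totord(P \cup Q))$ on each band, exactly as you argue; your extra check that $\totord(P \cup Q)$ contains $a$ and $b$ and lies in $[a,b]$ (via $\cM_1 = a$, $\cM_n = b$) is correct and makes explicit that it is a partition, a detail the paper leaves implicit.
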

\begin{proof}
This follows immediately by using \Cref{l:totord_decomposition}.
\end{proof}

\begin{lemma} \label{l:darboux_sum_nets_monotone}
Let $f \colon [a, b] \to E$ be order bounded and locally band preserving, and let $P$ and $Q$ be partitions of $[a, b]$. If $P \preceq Q$, then
\[ L(f, P) \leq L(f, Q) \quad \text{and} \quad U(f, Q) \leq U(f, P). \]
\end{lemma}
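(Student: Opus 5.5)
The plan is to reduce the statement to \Cref{l:darboux_sum_subset_inequality}, applied band by band, and then reassemble using the decomposition. Since $P \preceq Q$, there is a band decomposition $E = \bigoplus_{i=1}^m B_i$ with $\bP_i(P) \subseteq \bP_i(Q)$ for each $1 \leq i \leq m$. For each fixed $i$, \Cref{l:darboux_sum_subset_inequality} with the band projection $\bP_i$ gives
\[ \bP_i(L(f, P)) \leq \bP_i(L(f, Q)) \quad \text{and} \quad \bP_i(U(f, Q)) \leq \bP_i(U(f, P)). \]

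The bands $B_i$ are pairwise disjoint and sum to $E$, so the band projections satisfy $\sum_{i=1}^m \bP_i = I$. Each $\bP_i$ is a positive operator, and summing finitely many inequalities preserves order. Summing the inequalities above over $i$ therefore yields
\[ L(f, P) = \sum_{i=1}^m \bP_i(L(f, P)) \leq \sum_{i=1}^m \bP_i(L(f, Q)) = L(f, Q), \]
and the upper sums are handled identically.

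No step here is a genuine obstacle; the real work was done in \Cref{l:lbp_extrema_preserved} and \Cref{l:darboux_sum_subset_inequality}. The one point to check is that \Cref{l:darboux_sum_subset_inequality} applies to each $\bP_i$ separately. It requires only that $P$ and $Q$ be partitions of $[a,b]$ with $\bP_i(P) \subseteq \bP_i(Q)$, which is exactly what the definition of $\preceq$ provides. Its internal construction of the subpartition $R$ takes place inside the proof of that lemma, so nothing further needs to be verified here.
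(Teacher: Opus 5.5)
Your proof is correct and essentially identical to the paper's. It takes the decomposition witnessing $P \preceq Q$, applies \Cref{l:darboux_sum_subset_inequality} with each $\bP_i$, and reassembles via $\sum_{i=1}^m \bP_i = I$.
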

\begin{proof}
By definition of $P \preceq Q$, there exists a band decomposition $E = \bigoplus_{i = 1}^m B_i$ such that $\bP_i(P) \subseteq \bP_i(Q)$ for $1 \leq i \leq m$. Using \Cref{l:darboux_sum_subset_inequality},
\[ L(f, P) = \sum_{i = 1}^m \bP_i(L(f, P)) \leq \sum_{i = 1}^m \bP_i(L(f, Q)) = L(f, Q). \]
The other inequality can be shown in the same way.
\end{proof}

\begin{proposition} \label{p:partitions_directed_darboux_sums_monotone}
$(\cP, \preceq)$ is an upwards directed set. Moreover, if $f \colon [a, b] \to E$ is order bounded and locally band preserving, then we have $L(f, P) \uparrow L(f)$ and $U(f, P) \downarrow U(f)$.
\end{proposition}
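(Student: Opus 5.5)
For directedness I will use \Cref{l:totord_union_is_refinement}. Given $P, Q \in \cP$, the set $R \defeq \totord(P \cup Q)$ is a finite totally ordered subset of $E$. It contains $a$ and $b$, since $\cM_1$ is the meet of all elements, which equals $a$, and $\cM_n$ is the join of all elements, which equals $b$. Each $\cM_k$ lies in $[a,b]$, because $[a,b]$ is a sublattice. So $R$ is a partition of $[a,b]$, and by \Cref{l:totord_union_is_refinement} (or directly by \Cref{l:totord_decomposition}) we have $P, Q \preceq R$.

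Reflexivity of $\preceq$ is trivial, using the decomposition $E = E$. For transitivity, suppose $P \preceq Q$ via $\{B_i\}$ and $Q \preceq R$ via $\{C_j\}$. The common refinement $\{B_i \cap C_j\}$ is again a band decomposition of $E$, by distributivity of band sums and intersections as in the proof of \Cref{l:totally_ordered_decomposition}. Writing $\bP_{ij} = \bP_i \bP_{C_j}$, we get
\[ \bP_{ij}(P) = \bP_{ij}\bP_i(P) \subseteq \bP_{ij}\bP_i(Q) = \bP_{ij}(Q) = \bP_{ij}\bP_{C_j}(Q) \subseteq \bP_{ij}(R). \]
This shows $\preceq$ is a preorder and $(\cP,\preceq)$ is upwards directed. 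This step, together with checking that $\totord(P\cup Q)$ really is a partition, is where I would take the most care, though I expect no real obstacle.

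For the monotone convergence, first note that $f$ is order bounded, say $m \le f \le M$ on $[a,b]$. Then every $m_i, M_i$ lies between $m$ and $M$. Since the increments $x_i - x_{i-1} \geq 0$ sum to $b-a$ and $E$ is an $f$-algebra, each lower and upper sum is bounded: $m(b-a) \le L(f,P) \le U(f,P) \le M(b-a)$. Hence $L(f)$ and $U(f)$ exist by Dedekind completeness. By \Cref{l:darboux_sum_nets_monotone}, the net $P \mapsto L(f,P)$ is increasing and $P \mapsto U(f,P)$ is decreasing over the directed set $(\cP,\preceq)$. An increasing net whose supremum is $L(f)$ satisfies $L(f,P) \uparrow L(f)$ by definition of the notation, and the same reasoning gives $U(f,P) \downarrow U(f)$.
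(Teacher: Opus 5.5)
Your proof is correct and follows the same route as the paper, which simply cites \Cref{l:totord_union_is_refinement} for directedness and \Cref{l:darboux_sum_nets_monotone} for monotonicity. You also verify three points the paper leaves implicit: that $\totord(P \cup Q)$ is actually a partition, that $\preceq$ is transitive, and that the Darboux sums are bounded, so that $L(f)$ and $U(f)$ exist.
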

\begin{proof}
\Cref{l:totord_union_is_refinement} shows that $(\cP, \preceq)$ is an upwards directed set, and the rest of the claim is \Cref{l:darboux_sum_nets_monotone}.
\end{proof}

As a consequence of \Cref{p:partitions_directed_darboux_sums_monotone}, $f$ is integrable if and only if $$U(f, P) - L(f, P) \downarrow 0.$$ This immediately yields the following integrability condition, mimicking the classical integrability condition.

\begin{proposition} \label{p:integrability_condition}
Let $f \colon [a, b] \to E$ be order bounded and locally band preserving. Then $f$ is integrable if and only if there exists an $\Eps \searrow 0$ such that for every $\eps \in \Eps$ there exists a partition $P$ of $[a, b]$ such that $U(f, P) - L(f, P) \leq \eps$.
\end{proposition}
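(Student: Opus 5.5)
The plan is to reduce everything to the monotone convergence in \Cref{p:partitions_directed_darboux_sums_monotone}. By that proposition, $(\cP, \preceq)$ is upwards directed, $L(f, P) \uparrow L(f)$ and $U(f, P) \downarrow U(f)$. Hence the net $\bigl(U(f, P) - L(f, P)\bigr)_{P \in \cP}$ is downwards directed with infimum $U(f) - L(f)$. To see this, first note that inf and sup commute with the monotone nets in the Dedekind complete space $E$. Then, for $P \preceq Q$, \Cref{l:darboux_sum_nets_monotone} gives $U(f,Q) - L(f,Q) \leq U(f,P) - L(f,P)$.

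I also need $L(f) \leq U(f)$. For any $P, Q \in \cP$, set $R := \totord(P \cup Q)$. Then
\[ L(f,P) \leq L(f,R) \leq U(f,R) \leq U(f,Q) \]
by \Cref{l:totord_union_is_refinement} and \Cref{l:darboux_sum_nets_monotone}. The middle inequality holds because $m_i \leq M_i$ and $x_i - x_{i-1} \geq 0$, and multiplication in an $f$-algebra is positive. Taking the supremum over $P$ and the infimum over $Q$ gives $L(f) \leq U(f)$.

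For ($\Rightarrow$), suppose $U(f) = L(f)$. Take $\Eps := \{U(f,P) - L(f,P) : P \in \cP\}$. Its infimum is $U(f) - L(f) = 0$, so $\Eps \searrow 0$. Each $\eps \in \Eps$ is attained by its own partition, so this direction is trivial.

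For ($\Leftarrow$), suppose such an $\Eps$ exists. For each $\eps \in \Eps$, choose $P_\eps$ with $U(f,P_\eps) - L(f,P_\eps) \leq \eps$. Then
\[ 0 \leq U(f) - L(f) \leq U(f,P_\eps) - L(f,P_\eps) \leq \eps, \]
using $L(f,P_\eps) \leq L(f)$ and $U(f) \leq U(f,P_\eps)$. So $U(f) - L(f)$ is a lower bound of $\Eps$, hence is at most $\inf \Eps = 0$. Therefore $U(f) = L(f)$.

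The only step that needs care is the inequality $L(f) \leq U(f)$, and that is exactly what the total orderisation refinement from \Cref{l:totord_union_is_refinement} provides. Everything else is formal.
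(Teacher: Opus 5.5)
Your proof is correct and follows the paper's route. The paper simply observes that, by \Cref{p:partitions_directed_darboux_sums_monotone}, $f$ is integrable iff $U(f,P)-L(f,P)\downarrow 0$, and you have filled in the details of that observation. Your separate common-refinement argument for $L(f)\le U(f)$ is valid but redundant: your first paragraph already gives $U(f)-L(f)=\inf_P\bigl(U(f,P)-L(f,P)\bigr)\ge 0$.
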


To end this section, we demonstrate that the integral could have equivalently been defined as a limit of Riemann sums.

\begin{definition}
A \emph{tagged partition} $(P, \{c_i\})$ of $[a, b]$ consists of a partition $P = \{x_0, \dots, x_n\}$ and a set of points $\{c_i: 1 \leq i \leq n\}$ such that $c_i \in [x_{i-1}, x_i]$ for $1 \leq i \leq n$. For $f \colon [a, b] \to E$, define the \emph{Riemann sum} of $f$ with respect to the tagged partition $(P, \{c_i\})$ to be
\[ R(f, P, \{c_i\}) := \sum_{i = 1}^n f(c_i)(x_i - x_{i-1}). \]

Let $\cT$ be the collection of all tagged partitions of $[a, b]$. We write $$(P, \{c_i\}) \preceq (Q, \{d_i\}) \quad\mbox{if and only if}\quad P \preceq Q,$$ and this also defines a preorder on $\cT$ that directs $\cT$ upwards. We say $f$ is \emph{(Riemann) integrable} with integral $I \in E$ if $R(f, P, \{c_i\}) \to I$.
\end{definition}

\begin{proposition} \label{p:riemann_sum_characterization}
Let $f \colon [a, b] \to E$ be order bounded and locally band preserving. Then $f$ is Darboux integrable if and only if $f$ is Riemann integrable. In this case, the integrals coincide.
\end{proposition}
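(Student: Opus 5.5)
The plan is to squeeze every Riemann sum between the two Darboux sums of its partition and then use the monotone nets from \Cref{p:partitions_directed_darboux_sums_monotone}. For any tagged partition $(P,\{c_i\})$ with $P=\{x_0,\dots,x_n\}$, we have $m_i \leq f(c_i) \leq M_i$ and $x_i - x_{i-1} \geq 0$. In an $f$-algebra, multiplication by a positive element preserves order, so
\[ L(f,P) \leq R(f,P,\{c_i\}) \leq U(f,P). \]

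\emph{Darboux $\Rightarrow$ Riemann.} Suppose $L(f)=U(f)=I$. By \Cref{p:partitions_directed_darboux_sums_monotone}, $L(f,P)\uparrow I$ and $U(f,P)\downarrow I$ over $(\cP,\preceq)$. The preorder on $\cT$ only depends on the underlying partitions, so the nets $(P,\{c_i\})\mapsto L(f,P)$ and $(P,\{c_i\})\mapsto U(f,P)$ are still monotone with the same limit $I$ when indexed by $\cT$. The sandwich gives $\abs{R(f,P,\{c_i\}) - I} \leq U(f,P)-L(f,P)$, and the right-hand side decreases to $0$. Hence $R(f,P,\{c_i\})\to I$ in order.

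\emph{Riemann $\Rightarrow$ Darboux.} This is the harder direction, because in $E$ the infimum $m_i$ need not be attained or even approximated by a single value $f(c_i)$ uniformly in all bands. Suppose $R(f,P,\{c_i\})\to I$. By definition of order convergence, there is a net $\Delta \downarrow 0$ and, for each $\delta\in\Delta$, a partition $P_\delta$ such that $\abs{R(f,Q,\{d_i\})-I}\leq\delta$ for every tagged $Q\succeq P_\delta$. I would then show that for every such $Q$ we have $U(f,Q)\leq I+\delta$ and $L(f,Q)\geq I-\delta$. Granting this, $U(f)-L(f)\leq 2\delta$ for every $\delta \in \Delta$, so $U(f)=L(f)$. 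Moreover $L(f)\leq I\leq U(f)$, so the Darboux integral equals $I$.

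To get $U(f,Q)\leq I+\delta$, fix $Q=\{y_0,\dots,y_n\}$. Then $U(f,Q) = \sum_i M_i(y_i-y_{i-1})$, and each $M_i=\sup_{x\in[y_{i-1},y_i]} f(x)$. Multiplication by the positive element $y_i-y_{i-1}$ is order continuous in a Dedekind complete $f$-algebra, so
\[ M_i(y_i-y_{i-1}) = \sup_{x \in [y_{i-1}, y_i]} f(x)(y_i-y_{i-1}). \]
Finite sums of suprema are suprema of the finite sums over all choices of tags, since each term can be chosen independently. Therefore
\[ U(f,Q)=\sup_{\{d_i\}} R(f,Q,\{d_i\}) \leq I+\delta. \]
The bound $L(f,Q) \geq I - \delta$ follows in the same way using infima. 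The main obstacle is justifying these two identities for suprema. The sup of $f$ over $[y_{i-1},y_i]$ exists because $f$ is order bounded and $E$ is Dedekind complete. Multiplication by a positive element is a positive orthomorphism in an Archimedean $f$-algebra, hence order continuous, so it commutes with this supremum. With these facts the argument needs no band decompositions, and local band preservation enters only through \Cref{p:partitions_directed_darboux_sums_monotone} in the first direction.
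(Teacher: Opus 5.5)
Your proposal is correct and follows the paper's proof: you sandwich $L(f,P)\le R(f,P,\{c_i\})\le U(f,P)$ for the forward direction, and for the converse you take the supremum and infimum of the Riemann sums over all tag choices, where you usefully make explicit the order continuity of multiplication by $y_i-y_{i-1}$ that the paper uses silently. One small correction to your closing remark: deducing $U(f)=L(f)$ from $U(f)-L(f)\le 2\delta$, and likewise your claim $L(f)\le I\le U(f)$, needs $L(f)\le U(f)$ and monotonicity of the Darboux sums under $\preceq$, both of which come from \Cref{p:partitions_directed_darboux_sums_monotone}, so local band preservation is used in the second direction as well.
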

\begin{proof}
Suppose $U(f, P) \to I$ and $L(f, P) \to I$. Then there exists an $\Eps \searrow 0$ such that for every $\eps \in \Eps$ there exists a partition $P_\eps$ such that
\[ P \succeq P_\eps \implies U(f, P) - I \leq \eps \text{ and } I - L(f, P) \leq \eps. \]
Let $\eps \in \Eps$. Choose any $d_i$ that makes $(P_\eps, \{d_i\})$ a tagged partition of $[a, b]$. Then, for any $(P, \{c_i\}) \succeq (P_\eps, \{d_i\})$, we have that $P \succeq P_\eps$ and so
\[ R(f, P, \{c_i\}) - I \leq U(f, P) - I \leq \eps \text{ and } I - R(f, P, \{c_i\}) \leq I - L(f, P) \leq \eps. \]
Therefore, $\abs*{R(f, P, \{c_i\}) - I} \leq \eps$ and $R(f, P, \{c_i\}) \to I$.

Now suppose that $R(f, P, \{c_i\}) \to I$. Then there exists an $\Eps \searrow 0$ such that for every $\eps \in \Eps$, there exists a $(P_\eps, \{c_i\})$ such that
\[ (P, \{d_i\}) \succeq (P_\eps, \{c_i\}) \implies \abs{R(f, P, \{d_i\}) - I} \leq \eps. \]
Let $\eps \in \Eps$ with $P_\eps \defeq \{x_0, \dots, x_n\}$. For $1 \leq i \leq n$, let $d_i \in [x_{i-1}, x_i]$ then
\[ I - \eps \leq R(f, P_\eps, \{d_i\}) \leq I + \eps. \]
Taking the supremum and infimum over all choices of $\{d_i\}$ respectively yields
\[ I - \eps \leq U(f, P_\eps) \leq I + \eps \text{ and } I - \eps \leq L(f, P_\eps) \leq I + \eps. \]
Therefore, $U(f, P) \downarrow I$ and $L(f, P) \uparrow I$.
\end{proof}

\section{Properties of the Riemann integral} \label{s:properties_of_the_integral}

In what follows, we obtain generalisations of the basic classical properties of the Riemann integral.

\begin{proposition}
Let $f \colon [a \wedge c, b \vee d] \to E$ be order bounded and locally band preserving with $a \leq b$, $c \leq d$, $\bP(a) = \bP(c)$, and $\bP(b) = \bP(d)$ for some band projection $\bP$. If $f$ is integrable both on $[a, b]$ and on $[c, d]$, then
\[ \bP \left( \int_a^b f(x) dx \right) = \bP \left( \int_c^d f(x) dx \right). \]
\end{proposition}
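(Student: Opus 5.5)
The plan is to compare lower sums on the two intervals through a correspondence between partitions. I will show that for every partition $P$ of $[a,b]$ there is a partition $Q$ of $[c,d]$ with $\bP(L(f,P)) = \bP(L(f,Q))$, and symmetrically. Since $\bP$ is a band projection on a Dedekind complete space, it is order continuous, so it commutes with suprema of order bounded sets. Together with integrability this gives
\[ \bP\left(\int_a^b f(x)dx\right) = \sup_P \bP(L(f,P)) \leq \sup_Q \bP(L(f,Q)) = \bP\left(\int_c^d f(x)dx\right), \]
and the reverse inequality follows by symmetry.

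The construction is as follows. Given $P = \{a = x_0 \leq \cdots \leq x_n = b\}$, define
\[ y_i := \bP(x_i) + \bP^d(c + t_i(d-c)), \]
or more simply take $y_i := \bP(x_i) + \bP^d(c)$ for $0 \le i < n$ and $y_n := d$.

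\begin{itemize}
\item On $B$ the values $\bP(y_i) = \bP(x_i)$ are increasing and lie in $[\bP(a),\bP(b)] = [\bP(c),\bP(d)]$.
\item On $B^d$ the values are $\bP^d(c), \dots, \bP^d(c), \bP^d(d)$, which are increasing because $c \le d$.
\end{itemize}

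Hence $Q = \{y_0,\dots,y_n\}$ is a totally ordered subset of $[c,d]$ containing $y_0 = c$ (since $\bP(x_0) = \bP(a) = \bP(c)$) and $y_n = d$, so it is a partition of $[c,d]$.

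Next, $\bP(y_{i-1}) = \bP(x_{i-1})$ and $\bP(y_i) = \bP(x_i)$ for each $i$. By \Cref{l:lbp_extrema_preserved}(ii), applied to the subintervals $[x_{i-1},x_i]$ and $[y_{i-1},y_i]$ (the function is defined on $[a\wedge c, b\vee d]$, which contains both), we get $\bP(m_{[x_{i-1},x_i]}) = \bP(m_{[y_{i-1},y_i]})$. Band projections in an $f$-algebra satisfy $\bP(uv) = \bP(u)\bP(v)$ (they are multiplication operators / band preserving). Therefore
\[ \bP\bigl(m_i(x_i-x_{i-1})\bigr) = \bP(m'_i)\,\bP(y_i-y_{i-1}), \]
where $m'_i := m_{[y_{i-1},y_i]}$. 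Summing over $i$ gives $\bP(L(f,P)) = \bP(L(f,Q))$.

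The main obstacle is this last step. I need $f$ to be locally band preserving on the larger domain $[a\wedge c, b\vee d]$ so that \Cref{l:lbp_extrema_preserved} applies to these subintervals. I also need the multiplicativity $\bP(uv)=\bP(u)\bP(v)$, which I will justify from the fact that in an Archimedean $f$-algebra every band is a ring ideal. A minor point is that $L(f,\cdot)$ on $[a,b]$ uses $f$ restricted to $[a,b]$, which is still order bounded and locally band preserving, so the integrals are well defined. Alternatively, the same argument with upper sums and $U$ gives the result directly.
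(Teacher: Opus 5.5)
Your proof is correct and follows the paper's route. For each partition $P$ of $[a,b]$ you produce a partition $Q$ of $[c,d]$ with $\bP(Q)=\bP(P)$, and \Cref{l:lbp_extrema_preserved}(ii) then gives $\bP(L(f,P))=\bP(L(f,Q))$. You pass to the supremum using order continuity of $\bP$ and conclude by symmetry. Your explicit choice $y_i=\bP(x_i)+\bP^d(c)$, $y_n=d$, and your justification of $\bP(uv)=\bP(u)\bP(v)$ (bands in an Archimedean $f$-algebra are ring ideals) only fill in details the paper leaves implicit.
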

\begin{proof}
For any partition $P$ of $[a, b]$, there exists a partition $Q$ of $[c, d]$ such that $\bP(P) = \bP(Q)$, since $\bP(a) = \bP(c)$ and $\bP(b) = \bP(d)$. By \Cref{l:lbp_extrema_preserved}(ii), 
\[ \bP (L(f, P)) = \bP(L(f, Q)) \leq \bP \left( \int_c^d f(x) dx \right). \]
Since $\bP$ is order continuous, taking the limit over all partitions $P$ of $[a, b]$ yields
\[ \bP \left( \int_a^b f(x) dx \right) \leq \bP \left( \int_c^d f(x) dx \right). \]
The same argument can be used to show the reverse inequality.
\end{proof}

\begin{proposition}
Let $f \colon [a, b] \to E$ be order bounded and locally band preserving with $c \in [a, b]$. Then $f$ is integrable on $[a, b]$ if and only if $f$ is integrable on $[a, c]$ and $[c, b]$. In this case,
\[ \int_a^b f(x) dx = \int_a^c f(x) dx + \int_c^b f(x) dx. \]
\end{proposition}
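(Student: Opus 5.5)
The plan is to reduce everything to partitions of $[a,b]$ that contain $c$, and then apply the integrability condition \Cref{p:integrability_condition} together with the monotonicity of Darboux sums in \Cref{p:partitions_directed_darboux_sums_monotone}.

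First, for a partition $P_1$ of $[a,c]$ and a partition $P_2$ of $[c,b]$, the set $P_1 \cup P_2$ is totally ordered, because every element of $P_1$ is $\leq c$ and every element of $P_2$ is $\geq c$. So $P_1 \cup P_2$ is a partition of $[a,b]$, and directly from the definition
\[ L(f, P_1 \cup P_2) = L(f|_{[a,c]}, P_1) + L(f|_{[c,b]}, P_2), \]
with the same identity for upper sums. Conversely, given any partition $P$ of $[a,b]$, I would set $P' := \totord(P \cup \{c\})$. This is a partition of $[a,b]$, and $P \preceq P'$ by \Cref{l:totord_union_is_refinement}. The key claim is that $c \in P'$. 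On each band $B_i$ of a decomposition making $P \cup \{c\}$ totally ordered (as in \Cref{l:totord_decomposition}), $\bP_i(c)$ sits at some position $k_i$ in the chain. This position may depend on $i$, so $c$ need not equal any single $\cM_k$. To fix this I would instead take
\[ P'' := \{x_j \wedge c : x_j \in P\} \cup \{x_j \vee c : x_j \in P\}. \]
The first set is a chain in $[a,c]$ containing $a$ and $c$, the second is a chain in $[c,b]$ containing $c$ and $b$, and so $P''$ is a partition of $[a,b]$ containing $c$. On each band $B_i$ we have $\bP_i(P'') = \bP_i(P \cup \{c\}) \supseteq \bP_i(P)$, so $P \preceq P''$. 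Hence partitions through $c$ of the form $P_1 \cup P_2$ are cofinal in $(\cP,\preceq)$. Verifying this cofinality claim is the step I expect to need the most care, since it is exactly where the failure of naive unions in $E$ has to be avoided.

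For the forward direction, suppose $f$ is integrable on $[a,b]$. By \Cref{p:integrability_condition} there is an $\Eps \searrow 0$ such that for each $\eps \in \Eps$ there is a $P$ with $U(f,P) - L(f,P) \leq \eps$. Replacing $P$ by $P'' = P_1 \cup P_2$ keeps this bound by \Cref{l:darboux_sum_nets_monotone}. The splitting identity writes $U - L$ as a sum of two positive terms, so each term is $\leq \eps$. Applying \Cref{p:integrability_condition} again shows $f$ is integrable on $[a,c]$ and on $[c,b]$.

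For the converse, given $\Eps_1, \Eps_2 \searrow 0$ witnessing integrability on the two subintervals, I would use $\Eps := \Eps_1 + \Eps_2$. Its infimum is $0$: in a Dedekind complete lattice $\inf(\Eps_1 + \Eps_2) = \inf \Eps_1 + \inf \Eps_2$. The unions $P_1 \cup P_2$ then give $U - L \leq \eps_1 + \eps_2$, so $f$ is integrable on $[a,b]$.

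Finally, for the formula: along the cofinal family of partitions $P_1 \cup P_2$, we have
\[ L(f, P_1 \cup P_2) = L(f|_{[a,c]}, P_1) + L(f|_{[c,b]}, P_2) \leq \int_a^c f(x)\,dx + \int_c^b f(x)\,dx \]
for every such partition. Taking suprema gives $\int_a^b f(x)\,dx \leq \int_a^c f(x)\,dx + \int_c^b f(x)\,dx$. The symmetric argument with upper sums gives the reverse inequality, so the two sides are equal.
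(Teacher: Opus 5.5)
Your proof is correct. It uses the same two partitions as the paper, $P \wedge c$ of $[a,c]$ and $P \vee c$ of $[c,b]$, but obtains the key inequality by a different mechanism.

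The paper never compares $P$ with $(P\wedge c)\cup(P\vee c)$ in the preorder $\preceq$. It estimates term by term instead. On the band $B_{c \le x_{i-1}}$ the increment $y_i - y_{i-1}$ vanishes. On $B_{c > x_{i-1}}$ one has $[\bP(y_{i-1}),\bP(y_i)] \subseteq [\bP(x_{i-1}),\bP(x_i)]$, so \Cref{l:lbp_extrema_preserved}(i) bounds each oscillation term of $P\wedge c$ by the corresponding term of $P$. That handles $[a,c]$; the case $[c,b]$ is left as ``similar''.

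You instead verify $P \preceq P''$, and your band-by-band observation is exactly what is needed: $\bP_i(x_j\wedge c)$ and $\bP_i(x_j\vee c)$ are $\bP_i(x_j)$ and $\bP_i(c)$ in some order. \Cref{l:darboux_sum_nets_monotone} then does the work, and positivity of the two brackets gives both subintervals at once.

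What your route buys is the cofinality of partitions through $c$. This lets you get the formula by taking suprema and infima directly, instead of the paper's $\eps$-bookkeeping $\int_a^b f \le L(f,P_1)+L(f,P_2)+\eps_a+\eps_b$. The paper's route is more hands-on and avoids the refinement preorder altogether.

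Three small remarks:
\begin{enumerate}
\item Drop the abandoned first attempt with $\totord(P\cup\{c\})$ from a write-up. Your observation that $c$ need not belong to it is correct.
\item The identity $\inf(\Eps_1+\Eps_2)=\inf\Eps_1+\inf\Eps_2$ holds in any ordered vector space whenever the infima exist, so Dedekind completeness is not what makes it work.
\item For the converse and the formula, you do not even need $\Eps$ or cofinality. Every $P_1\cup P_2$ is a partition of $[a,b]$, so
\[ \int_a^c f+\int_c^b f \le L(f,[a,b]) \le U(f,[a,b]) \le \int_a^c f+\int_c^b f , \]
which proves integrability on $[a,b]$ and the additivity formula simultaneously.
\end{enumerate}
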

\begin{proof}
Suppose that $f$ is integrable on $[a, b]$. Then there exists an $\Eps \searrow 0$ such that for every $\eps \in \Eps$ there exists a partition $P$ of $[a, b]$ such that $U(f, P) - L(f, P) \leq \eps$. Fix $\eps \in \Eps$ and let $P \defeq \{x_0, \dots, x_n\}$ be the corresponding partition. Now, $Q \defeq P \wedge c = \{y_0 \defeq x_0 \wedge c, \dots, y_n \defeq x_n \wedge c \}$ is a partition of $[a, c]$. Fix $1 \leq i \leq n$. Then
\[ \bP_{c \leq x_{i-1}}(y_i - y_{i-1}) = \bP_{c \leq x_{i-1}}(c - c) = 0, \] and
\[ \bP_{c > x_{i-1}}(x_{i-1}) = \bP_{c > x_{i-1}} (y_{i-1}) \leq \bP_{c > x_{i-1}}(y_i) \leq \bP_{c > x_{i-1}}(x_i). \]
By \Cref{l:lbp_extrema_preserved}(i),
\begin{align*}
(M_{[y_{i-1}, y_i]} - m_{[y_{i-1}, y_i]}) (y_i - &y_{i-1})
 = \bP_{c > x_{i-1}} \left( (M_{[y_{i-1}, y_i]} - m_{[y_{i-1}, y_i]}) (y_i - y_{i-1}) \right) \\
& \leq \bP_{c > x_{i-1}} \left( (M_{[x_{i-1}, x_i]} - m_{[x_{i-1}, x_i]}) (x_i - x_{i-1}) \right) \\
& \leq (M_{[x_{i-1}, x_i]} - m_{[x_{i-1}, x_i]}) (x_i - x_{i-1}).
\end{align*}
Therefore, $U(f, Q) - L(f, Q) \leq U(f, P) - L(f, P) \leq \eps$. This proves that $f$ is integrable on $[a, c]$. Similarly, $f$ is integrable on $[c, b]$.

Conversely, suppose $f$ is integrable on $[a, c]$ and on $[c, b]$. Then, by \Cref{p:integrability_condition}, there exist $\Eps_a, \Eps_b \searrow 0$ such that for any $\eps_a \in \Eps_a$ and $\eps_b \in \Eps_b$ there exist partitions $P_1$ and $P_2$ of $[a, c]$ and $[c, b]$, respectively, such that
\[ U(f, P_1) - L(f, P_1) \leq \eps_a \quad \text{and} \quad U(f, P_2) - L(f, P_2) \leq \eps_b. \]
Then, $P \defeq P_1 \cup P_2$ is a partition of $[a, b]$ such that
\[ U(f, P) - L(f, P) = [U(f, P_1) - L(f, P_1)] + [U(f, P_2) - L(f, P_2)] \leq \eps_a + \eps_b. \]
Since $\Eps_a + \Eps_b \searrow 0$, $f$ is integrable on $[a, b]$. Moreover,
\begin{align*}
\int_a^b f(x) dx
& \leq U(f, P) \leq L(f, P) + \eps_a + \eps_b \\
& = L(f, P_1) + L(f, P_2) + \eps_a + \eps_b \\
& \leq \int_a^c f(x) dx + \int_c^b f(x) dx + \eps_a + \eps_b,
\end{align*}
showing that $\int_a^b f(x) dx \leq \int_a^c f(x) dx + \int_c^b f(x) dx$. The reverse inequality can be shown in a similar fashion.
\end{proof}

\begin{proposition} \label{p:integral_properties}
Suppose $f, g \colon [a, b] \to E$ are order bounded and locally band preserving. If both are Riemann integrable on $[a, b]$ and $c \in E$, then the following holds.
\begin{enumerate}[(i)]
\item $f + g$ is Riemann integrable on $[a, b]$ and
\[ \int_a^b (f + g)(x) dx = \int_a^b f(x) dx + \int_a^b g(x) dx. \]
\item $cf$ is Riemann integrable on $[a, b]$ and
\[ \int_a^b (cf)(x) dx = c \int_a^b f(x) dx. \]
\item If $f \leq g$, then $\int_a^b f(x) dx \leq \int_a^b g(x) dx$.
\item $\abs{f} \colon x \mapsto \abs{f(x)}$ is integrable and $\abs*{\int_a^b f(x) dx} \leq \int_a^b \abs{f(x)} dx$.
\end{enumerate}
\end{proposition}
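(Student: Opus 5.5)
The plan is to work throughout with the Darboux characterisation. By \Cref{p:riemann_sum_characterization}, Riemann and Darboux integrability coincide for order bounded locally band preserving functions, so we may use \Cref{p:integrability_condition} and the monotone nets $L(f,P)\uparrow L(f)$, $U(f,P)\downarrow U(f)$ from \Cref{p:partitions_directed_darboux_sums_monotone}. First note that $f+g$, $cf$ and $\abs{f}$ are again order bounded and locally band preserving. Band projections on an $f$-algebra are multiplication operators by idempotents, so they commute with addition, with multiplication by $c$ and with taking absolute values. Hence $\bP(x)=\bP(y)$ gives $\bP((cf)(x))=\bP(c)\bP(f(x))=\bP(c)\bP(f(y))$, and similarly for the other two functions.

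For (i), fix a partition $P$ and observe that $m_I(f)+m_I(g)\leq m_I(f+g)$ and $M_I(f+g)\leq M_I(f)+M_I(g)$. Multiplying by the positive elements $x_i-x_{i-1}$ gives
\[ L(f,P)+L(g,P)\leq L(f+g,P)\leq U(f+g,P)\leq U(f,P)+U(g,P). \]
Because $(\cP,\preceq)$ is directed and the Darboux nets are monotone, we can take a common refinement $\totord(P\cup Q)$ of partitions chosen for $\eps\in\Eps_f$ and $\eps'\in\Eps_g$. This gives $U(f+g,R)-L(f+g,R)\leq \eps+\eps'$, and $\Eps_f+\Eps_g\searrow 0$. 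The value of the integral then follows by squeezing between the two order limits.

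For (ii), split $E=B_{c\geq 0}\oplus B_{c<0}$, i.e.\ write $c=c^+-c^-$. If $c\geq0$, then multiplication by $c$ is a positive order continuous lattice homomorphism. This gives $m_I(cf)=c\,m_I(f)$ and $M_I(cf)=cM_I(f)$, so $L(cf,P)=cL(f,P)$ and $U(cf,P)=cU(f,P)$, and the claim follows from order continuity of multiplication in a Dedekind complete $f$-algebra. For $c\leq 0$ the infimum and supremum swap, so $L(cf,P)=cU(f,P)$. Writing $cf=c^+f-c^-f$, the general case then follows from (i). This is where I need care that $c^+f$ is still locally band preserving, which holds by the argument in the first paragraph.

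For (iii), $f\leq g$ gives $L(f,P)\leq L(g,P)$, and we pass to the supremum. For (iv), I expect the main obstacle to be the integrability of $\abs f$. The classical inequality $M_I(\abs f)-m_I(\abs f)\leq M_I(f)-m_I(f)$ needs an argument here, since $E$ is not totally ordered. I would argue that for $x,y\in I$ we have $\abs{f(x)}-\abs{f(y)}\leq\abs{f(x)-f(y)}\leq M_I(f)-m_I(f)$. Taking the supremum over $x$ and then the infimum over $y$, which is legitimate by Dedekind completeness and the distributive laws, gives the inequality. Then \Cref{p:integrability_condition} yields integrability of $\abs f$. Finally, $-\abs f\leq f\leq\abs f$, so (ii) with $c=-1$ and (iii) give $\abs{\int f}\leq\int\abs f$.
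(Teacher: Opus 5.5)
Your proof is correct and follows essentially the paper's route: the same interval-wise estimates for $m_I$ and $M_I$, passage to the monotone Darboux limits, and $M_I(\abs{f})-m_I(\abs{f})\le M_I(f)-m_I(f)$ for (iv). It differs only cosmetically in (i), where you apply the integrability criterion on $\totord(P\cup Q)$ rather than squeezing $L(f+g)$ and $U(f+g)$ directly, and in (ii), where you handle $c^+$ and $-c^-$ separately and recombine via (i) rather than using $m_I(cf)=c^+m_I(f)-c^-M_I(f)$.

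One small correction: $E$ need not be unital here, so band projections need not be multiplication by idempotents. The identity you actually use, $\bP(cy)=c\,\bP(y)=\bP(c)\bP(y)$, still holds, because bands in an Archimedean $f$-algebra are ring ideals and disjoint elements have zero product.
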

\begin{proof}
To prove (i), note that for any subinterval $I$ of $[a, b]$, $M_I(f + g) \leq M_I(f) + M_I(g)$ and $m_I(f) + m_I(g) \leq m_I(f + g)$. Therefore, for any partition $P$ of $[a, b]$, we have that
\[ L(f, P) + L(g, P) \leq L(f + g, P) \leq U(f + g, P) \leq U(f, P) + L(f, P). \]
Taking the limit over $P$ throughout yields
\[ \int_a^b f(x) dx + \int_a^b g(x) dx \leq L(f + g) \leq U(f + g) \leq \int_a^b f(x) dx + \int_a^b g(x) dx. \]
Thus, $L(f+g) = U(f+g)$ and
\[
\int_a^b (f+g)(x)dx = \int_a^b f(x) dx + \int_a^b g(x) dx.
\]

The proof of (ii) is similar except we use that for any subinterval $I$ of $[a, b]$ we have that $m_I(cf) = c^+ m_I(f) - c^- M_I(f)$ and $M_I(cf) = c^+ M_I(f) - c^- m_I(f)$. Thus, for any partition $P$ of $[a, b]$, we have that
\[ c^+ L(f, P) - c^- U(f, P) = L(cf, P) \leq U(cf, P) = c^+ U(f, P) - c^- L(f, P). \]
Taking the limit over $P$ throughout yields
\[ c \int_a^b f(x) dx = L(cf) \leq U(cf) = c \int_a^b f(x) dx. \]
Thus, $L(cf) = U(cf)$ and 
\[
\int_a^b cf(x) dx = c \int_a^b f(x) dx.
\]

Taking the limit over $P$ in $U(f, P) \leq U(g, P)$ immediately yields (iii), and the second half of (iv) follows from (iii) after we prove the first half. By \Cref{p:integrability_condition}, there exists an $\Eps \searrow 0$ such that for all $\eps \in \Eps$ there exists a partition $P$ of $[a, b]$ satisfying $U(f, P) - L(f, P) \leq \eps$. For any subinterval $I$ of $[a, b]$, we have that for all $x, y \in I$,
\[ \abs{\abs{f(x)} - \abs{f(y)}} \leq \abs{f(x) - f(y)}, \]
from which it follows that $M_I(\abs{f}) - m_I(\abs{f}) \leq M_I(f) -m_I(f)$. Therefore,
\[ U(\abs{f}, P) - L(\abs{f}, P) \leq U(f, P) - L(f, P) \leq \eps.\qedhere \]
\end{proof}

\begin{proposition}
Let $f, g \colon [a, b] \to E$ be order bounded and locally band preserving. If $f$ and $g$ are both integrable, then $fg$ is integrable.
\end{proposition}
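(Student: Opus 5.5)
The plan is to follow the classical route: use $fg = f^+g^+ - f^+g^- - f^-g^+ + f^-g^-$, or equivalently the polarisation identity $fg = \tfrac14\left((f+g)^2 - (f-g)^2\right)$, and prove that the square of an integrable function is integrable. First I check that every function involved is locally band preserving. Band projections commute with products in an $f$-algebra, so $\bP(x)=\bP(y)$ implies $\bP(f(x)g(x)) = \bP(f(x))\bP(g(x)) = \bP(f(y))\bP(g(y))$. The same holds for $f\pm g$ and for $\abs{f}$, and these functions are order bounded. By \Cref{p:integral_properties}(i),(ii), $f\pm g$ is integrable and linear combinations of integrable functions are integrable. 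It therefore suffices to show that $h^2$ is integrable whenever $h$ is order bounded, locally band preserving and integrable.

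By \Cref{p:integral_properties}(iv), $\abs{h}$ is integrable, and $h^2 = \abs{h}^2$. So I may assume $h \ge 0$, with $h \le K$ for some $K \in E^+$. Fix a subinterval $I$. For $x,y\in I$,
\[ h(x)^2 - h(y)^2 = (h(x)+h(y))(h(x)-h(y)) \le 2K\,(M_I(h) - m_I(h)), \]
using that products of positive elements are positive in an $f$-algebra. Taking the supremum over $x$ and then over $y$ gives $M_I(h^2) - m_I(h^2) \le 2K(M_I(h)-m_I(h))$. Here I use that multiplication by a positive element is order continuous in a Dedekind complete (hence Archimedean) $f$-algebra, together with $\sup_x h(x)^2 = M_I(h)^2$ for $h\ge0$; more simply, the displayed inequality bounds the set of differences directly. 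Multiplying by $x_i - x_{i-1} \ge 0$ and summing gives
\[ U(h^2,P) - L(h^2,P) \le 2K\,\bigl(U(h,P) - L(h,P)\bigr) \]
for every partition $P$.

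By \Cref{p:integrability_condition}, there is $\Eps\searrow0$ such that each $\eps\in\Eps$ has a partition $P$ with $U(h,P)-L(h,P)\le\eps$. Then $2K\Eps\searrow 0$, because multiplication by $2K$ is order continuous (it is an orthomorphism on $E$). Applying \Cref{p:integrability_condition} again shows that $h^2$ is integrable.

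The step needing the most care is passing from the pointwise bound to a bound on $M_I(h^2)-m_I(h^2)$. The pointwise inequality gives $h(x)^2 \le h(y)^2 + 2K(M_I-m_I)$ for all $x,y\in I$. Taking the supremum over $x$ gives $M_I(h^2) \le h(y)^2 + 2K(M_I-m_I)$. Taking the infimum over $y$ gives $M_I(h^2) \le m_I(h^2) + 2K(M_I-m_I)$. This only uses that sup and inf commute with adding a fixed element, so it is fine.
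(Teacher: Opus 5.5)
Your proof is correct and takes the same route as the paper. Both arguments use the polarisation $fg=\tfrac14\bigl((f+g)^2-(f-g)^2\bigr)$, reduce to $h^2=|h|^2$ with $0\le h\le K$, and prove $U(h^2,P)-L(h^2,P)\le 2K\,(U(h,P)-L(h,P))$, then conclude with \Cref{p:integrability_condition}. The differences are small refinements: you get the oscillation bound directly from the pointwise inequality rather than from the paper's unproved identities $m_I(h^2)=m_I(h)^2$ and $M_I(h^2)=M_I(h)^2$, and you check explicitly that $fg$, $f\pm g$ and $|h|$ are locally band preserving, which the paper leaves implicit.
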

\begin{proof}
Let $K \in E$ be such that $\abs{f(x)}, \abs{g(x)} \leq K$ for all $x \in [a, b]$. 

(i) We first assume $f = g \geq 0$. Since $f$ is integrable, there exists an $\Eps \searrow 0$ such that for every $\eps \in \Eps$ there exists a partition $P$ of $[a, b]$ such that $U(f, P) - L(f, P) \leq \eps$.

Let $\eps \in \Eps$ and $P := \{x_0, \dots, x_n\}$ as above. Then, for any subinterval $I$ of $[a, b]$, we have that $m_I(f^2) = m_I(f)^2$ and $M_I(f^2) = M_I(f)^2$. Therefore,
\begin{align*}
U(&f^2, P) - L(f^2, P)
 = \sum_{i = 1}^n (M_{[x_{i-1}, x_i]}(f^2) - m_{[x_{i-1}, x_i]}(f^2))(x_i - x_{i-1}) \\
& = \sum_{i = 1}^n (M_{[x_{i-1}, x_i]}(f)^2 - m_{[x_{i-1}, x_i]}(f)^2)(x_i - x_{i-1}) \\
& = \sum_{i = 1}^n (M_{[x_{i-1}, x_i]}(f) + m_{[x_{i-1}, x_i]}(f))(M_{[x_{i-1}, x_i]}(f) - m_{[x_{i-1}, x_i]}(f))(x_i - x_{i-1}) \\
& \leq 2 K \sum_{i = 1} (M_{[x_{i-1}, x_i]}(f) - m_{[x_{i-1}, x_i]}(f))(x_i - x_{i-1}) \\
& = 2 K (U(f, P) - L(f, P)) \\
& \leq 2 K \eps.
\end{align*}
Since $2 K \Eps \searrow 0$, this shows $f^2$ is integrable.

(ii) Now assume $f = g$. Since $f$ is integrable, $\abs{f}$ is integrable by \Cref{p:integral_properties}, and therefore $fg = f^2 = \abs{f}^2$ is integrable by part (i).

(iii) Without any additional assumptions on $f$ and $g$, showing that $fg$ is integrable follows from
\[ fg = \tfrac{1}{4} ((f+g)^2 - (f-g)^2).\qedhere \]
\end{proof}

Next we will define what it means for a net of functions to converge uniformly, and show that a uniform limit of integrable functions is again integrable. Note that to sensibly define what it means to converge uniformly, we must have the same regulating set $\Eps$ when considering convergence at each point, which need not be true when one has pointwise convergence.

\begin{definition} \label{d:uniform_convergence}
Let $f_\alpha, f \colon \dom(f) \to E$. We say that $f_\alpha \to f$ \emph{uniformly} if there exists an $\Eps \searrow 0$ such that for all $\eps \in \Eps$, there exists an $\alpha_0$ satisfying
\[ x \in \dom(f) \text{ and } \alpha \geq \alpha_0 \implies \abs{f_\alpha(x) - f(x)} \leq \eps. \]
\end{definition}

\begin{proposition}
Let $f_\alpha, f \colon [a, b] \to E$ be order bounded and locally band preserving functions with $f_\alpha \to f$ uniformly. If $f_\alpha$ is integrable for each $\alpha$, then $f$ is integrable and
\[ \int_a^b f_\alpha(x) dx \to \int_a^b f(x) dx. \]
\end{proposition}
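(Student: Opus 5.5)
The plan is to mimic the classical $\eps/3$ argument, with the Darboux sums over a partition controlled by the uniform bound multiplied by $b-a$. Let $\Eps \searrow 0$ witness uniform convergence as in \Cref{d:uniform_convergence}. The key pointwise estimate is this: if $\abs{f_\alpha(x) - f(x)} \leq \eps$ for all $x \in [a,b]$, then for every subinterval $I$ we have $M_I(f) \leq M_I(f_\alpha) + \eps$ and $m_I(f) \geq m_I(f_\alpha) - \eps$. Multiplying by the positive elements $x_i - x_{i-1}$ (which is allowed in an $f$-algebra) and summing, we obtain for every partition $P$
\[ U(f,P) \leq U(f_\alpha,P) + \eps(b-a), \qquad L(f,P) \geq L(f_\alpha,P) - \eps(b-a), \]
and symmetrically with $f$ and $f_\alpha$ interchanged.

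For integrability, fix $\eps \in \Eps$ and choose $\alpha_0$ as in the definition. Then
\[ 0 \leq U(f,P) - L(f,P) \leq U(f_{\alpha_0},P) - L(f_{\alpha_0},P) + 2\eps(b-a) \]
for every $P$. Since $f_{\alpha_0}$ is integrable, \Cref{p:partitions_directed_darboux_sums_monotone} gives $U(f_{\alpha_0},P) - L(f_{\alpha_0},P) \downarrow 0$, and $f$ is order bounded and locally band preserving, so $U(f,P) - L(f,P)$ is decreasing in $P$. Taking the infimum over $P$ yields $U(f) - L(f) \leq 2\eps(b-a)$. Because $\inf \Eps = 0$, the set $2(b-a)\Eps$ also has infimum $0$ (multiplication by a positive element is order continuous in an Archimedean $f$-algebra). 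Hence $U(f) = L(f)$, and $f$ is integrable. Alternatively one can pass to \Cref{p:integrability_condition} with a sum set, but the direct infimum argument seems cleaner.

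For convergence of the integrals, pass to the limit over $P$ in the displayed inequalities. For $\alpha \geq \alpha_0$ this gives
\[ \int_a^b f \leq \int_a^b f_\alpha + \eps(b-a) \quad\text{and}\quad \int_a^b f_\alpha \leq \int_a^b f + \eps(b-a), \]
using $\int f = U(f) \leq U(f,P)$ together with $U(f_\alpha,P) \downarrow \int f_\alpha$, and similarly for the lower sums. Therefore
\[ \abs*{\int_a^b f_\alpha - \int_a^b f} \leq \eps(b-a) \qquad \text{for all } \alpha \geq \alpha_0, \]
and the regulating set $(b-a)\Eps \searrow 0$ witnesses the order convergence.

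The main obstacle is the justification that $(b-a)\Eps$ still has infimum $0$. This is where the $f$-algebra structure enters: multiplication by a fixed positive element is a positive orthomorphism, hence order continuous, so it preserves infima of arbitrary sets and not only directed ones. I would cite this standard fact rather than reprove it.
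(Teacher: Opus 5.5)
Your proposal is correct and follows essentially the paper's route: you bound $M_I(f)$ and $m_I(f)$ by $M_I(f_\alpha)+\eps$ and $m_I(f_\alpha)-\eps$, deduce $U(f,P)\le U(f_\alpha,P)+\eps(b-a)$ and $L(f,P)\ge L(f_\alpha,P)-\eps(b-a)$, and pass to the limit over partitions to obtain integrability and $\abs*{\int_a^b f_\alpha - \int_a^b f}\le \eps(b-a)$, a final step you spell out more carefully than the paper's ``taking the limit over all $\alpha$''. One imprecision is your justification that $(b-a)\Eps \searrow 0$: order continuity alone only controls downward directed sets, so you should either also use that multiplication by $b-a\ge 0$ is a lattice homomorphism, or first replace $\Eps$ by its set of finite infima, which still regulates the uniform convergence because the index set of the net is directed.
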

\begin{proof}
Since $f_\alpha \to f$ uniformly, there exists an $\Eps \searrow 0$ such that for every $\eps \in \Eps$ there exists an $\alpha_\eps$ satisfying
\[ y \in [a, b] \text{ and } \alpha \geq \alpha_\eps \implies \abs{f_\alpha(y) - f(y)} \leq \eps. \]
Let $\alpha \geq \alpha_\eps$. Then, for any subinterval $I \subseteq [a, b]$,
\[ m_I(f_\alpha) - \eps \leq m_I(f) \leq M_I(f) \leq M_I(f_\alpha) + \eps, \]
and hence, for any partition $P = \{x_0, \dots, x_n\}$ of $[a, b]$, we have that
\begin{align*}
U(f, P)
& = \sum_{i = 1}^n M_{[x_{i-1}, x_i]}(f) (x_i - x_{i-1}) \\
& \leq \sum_{i = 1}^n (M_I(f_\alpha) + \eps)(x_i - x_{i- 1}) \\
& = U(f_\alpha, P) + \eps (b - a),
\end{align*}
and, similarly, $L(f, P) \geq L(f_\alpha, P) - \eps(b - a)$. Taking the limit over all partitions $P$ yields
\[ \int_a^b f_\alpha(x) dx - \eps(b - a) \leq L(f) \leq U(f) \leq \int_a^b f_\alpha(x) + \eps(b - a). \]
Finally, taking the limit over all $\alpha$ shows that
\[ L(f) = U(f) = \lim_\alpha \int_a^b f_\alpha(x) dx. \qedhere\]
\end{proof}

The following two propositions provide examples of integrable functions.

\begin{proposition} \label{p:monotone_integrable}
Let $f \colon [a, b] \to E$ be order bounded, monotone, and locally band preserving. Then $f$ is integrable.
\end{proposition}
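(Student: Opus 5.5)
We will verify the integrability condition of \Cref{p:integrability_condition} using equally spaced partitions, exactly as in the classical proof. By replacing $f$ with $-f$ if necessary (note that $-f$ is again order bounded, monotone and locally band preserving, and that $U(-f,P) - L(-f,P) = U(f,P) - L(f,P)$), we may assume $f$ is increasing.

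For $n \in \bN$, let
\[ P_n := \{x_0, \dots, x_n\}, \qquad x_i := a + \tfrac{i}{n}(b-a). \]
Since $b - a \geq 0$, this set is totally ordered, with $a = x_0 \leq x_1 \leq \cdots \leq x_n = b$, so $P_n$ is a partition of $[a,b]$. Because $f$ is increasing, every $x \in [x_{i-1}, x_i]$ satisfies $f(x_{i-1}) \leq f(x) \leq f(x_i)$. Hence
\[ m_i = f(x_{i-1}) \quad\text{and}\quad M_i = f(x_i), \]
both infimum and supremum being attained at the endpoints.

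Each increment equals $x_i - x_{i-1} = \tfrac1n (b-a)$. The difference of the Darboux sums therefore telescopes:
\[ U(f,P_n) - L(f,P_n) = \sum_{i=1}^n \bigl(f(x_i) - f(x_{i-1})\bigr)\tfrac1n (b-a) = \tfrac1n \bigl(f(b) - f(a)\bigr)(b-a). \]

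Set $\eps_0 := (f(b) - f(a))(b-a)$. This element is positive, because $f(b) - f(a) \geq 0$, $b - a \geq 0$, and products of positive elements in an $f$-algebra are positive. Let $\Eps := \{\tfrac1n \eps_0 : n \in \bN\}$. Since $E$ is Archimedean, $\inf \Eps = 0$, so $\Eps \searrow 0$. For each $\eps = \tfrac1n\eps_0 \in \Eps$, the partition $P_n$ satisfies $U(f,P_n) - L(f,P_n) \leq \eps$. By \Cref{p:integrability_condition}, $f$ is integrable.

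There is no real obstacle here. The only points needing care are that the equally spaced points genuinely form a totally ordered partition, which holds because $b - a \geq 0$, and that the infimum of $\tfrac1n\eps_0$ is $0$, which is where the Archimedean property is used. The locally band preserving hypothesis enters only through \Cref{p:integrability_condition}, which relies on the directedness and monotonicity results of \Cref{p:partitions_directed_darboux_sums_monotone}.
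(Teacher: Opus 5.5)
Your proof is correct and is essentially the paper's own argument. The paper uses the same equally spaced partition $x_i = a + \tfrac{i}{n}(b-a)$, the same telescoping computation giving $U(f,P)-L(f,P) = \tfrac1n(b-a)(f(b)-f(a))$, and the same appeal to \Cref{p:integrability_condition}. You also spell out the details the paper leaves implicit: the reduction to increasing $f$, positivity of $\eps_0$, and the Archimedean property.
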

\begin{proof}
Assume, without loss of generality, that $f$ is increasing. Define $$\Eps \defeq \{ \tfrac{1}{n} (b - a)(f(b) - f(a)): n \in \bN \}.$$ Let $\eps := \frac{1}{n}(b - a)(f(b) - f(a))$, and define $P = \{x_0, \dots, x_n\}$, where $x_i \defeq a + \frac{i}{n} (b - a)$. Then,
\begin{align*}
U(f, P) - L(f, P)
& = \sum_{i = 1}^n (M_{[x_{i-1}, x_i]} - m_{[x_{i-1}, x_i]})(x_i - x_{i-1}) \\
& = \sum_{i = 1}^n (f(x_i) - f(x_{i-1})) ({\textstyle\frac{1}{n}} (b - a)) \\
& = {\textstyle\frac{1}{n}} (b - a) (f(b) - f(a)) \\
& = \eps.
\end{align*}
Since $\Eps \searrow 0$, $f$ is integrable.
\end{proof}

\begin{proposition} \label{p:uniformly_continuous_integrable}
Let $f \colon [a, b] \to E$ be uniformly order continuous and locally band preserving. Then $f$ is integrable.
\end{proposition}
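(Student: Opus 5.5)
We plan to verify the integrability condition of \Cref{p:integrability_condition}, using uniform partitions as in the proof of \Cref{p:monotone_integrable}. Before that we must check that $f$ is order bounded, since the statement does not assume it. For the regulating set we apply \Cref{p:eps_delta_continuity}(ii) with
\[ \Delta \defeq \{ \tfrac{1}{n}(b-a) : n \in \bN \}. \]
This set is downward directed, and its infimum is $0$ because $E$ is Archimedean. The proposition then yields an $\Eps \searrow 0$ such that for every $\eps \in \Eps$ there is an $n$ with
\[ x, y \in [a,b] \text{ and } \abs{x-y} \leq \tfrac{1}{n}(b-a) \implies \abs{f(x)-f(y)} \leq \eps. \]

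Fix such an $\eps$ and $n$, and let $P = \{x_0, \dots, x_n\}$ with $x_i \defeq a + \frac{i}{n}(b-a)$. This is a totally ordered partition of $[a,b]$. Each subinterval satisfies $x_i - x_{i-1} = \frac{1}{n}(b-a)$. So for $x, y \in [x_{i-1}, x_i]$ we get $\abs{x-y} \leq x_i - x_{i-1} = \frac1n(b-a)$, and therefore $\abs{f(x)-f(y)} \leq \eps$.

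Order boundedness follows from this. For $x \in [x_{i-1}, x_i]$ we have $\abs{f(x)} \leq \abs{f(x_{i-1})} + \eps$. Hence $f$ is bounded on $[a,b]$ by $\sum_i \abs{f(x_i)} + \eps$, so the suprema and infima $M_i, m_i$ exist by Dedekind completeness.

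The uniform bound gives $f(x) \leq f(y) + \eps$ for all $x, y$ in the same subinterval. Taking the supremum over $x$ and then the infimum over $y$ yields $M_i - m_i \leq \eps$. Since $x_i - x_{i-1} \geq 0$ and multiplication by positive elements is monotone in an $f$-algebra, we obtain
\[ U(f,P) - L(f,P) = \sum_{i=1}^n (M_i - m_i)(x_i - x_{i-1}) \leq \eps \sum_{i=1}^n (x_i - x_{i-1}) = \eps(b-a). \]

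Finally, consider the set $\Eps(b-a) = \{\eps(b-a) : \eps \in \Eps\}$. In an Archimedean $f$-algebra, multiplication by a fixed positive element is order continuous. Therefore $\inf_{\eps \in \Eps} \eps(b-a) = 0$, that is, $\Eps(b-a) \searrow 0$, and \Cref{p:integrability_condition} gives that $f$ is integrable.

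The main obstacle is the order continuity of multiplication used in this last step. If it needs justification, I would take the following route: for $0 \leq u \leq \eps(b-a)$ for all $\eps \in \Eps$, pass to the principal band generated by $b-a$ and use a Dedekind complete $f$-algebra argument. A cleaner alternative is to cite the standard fact that $f$-algebra multiplication is order continuous in each variable in the Archimedean case.

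Two smaller checks remain. First, the uniform partition genuinely satisfies $\abs{x-y}\le\delta$ for points in the same subinterval, which is immediate since the partition is totally ordered. Second, the local band preservation hypothesis is needed only to apply \Cref{p:integrability_condition}.
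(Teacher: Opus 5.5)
Your main line is exactly the paper's proof: the regulating set $\Delta=\{\frac1n(b-a)\}$, the uniform partition, the estimate $M_i-m_i\le\eps$ giving $U(f,P)-L(f,P)\le\eps(b-a)$, and then \Cref{p:integrability_condition}. The paper never checks that $f$ is order bounded, and you were right to add that check. However, the argument you give for it fails. You assume every $x\in[a,b]$ lies in some $[x_{i-1},x_i]$. This is a classical fact that breaks down once $E$ is not totally ordered. In $E=\bR^2$ with $a=(0,0)$, $b=(1,1)$ and $n=2$, the point $(0,1)$ lies neither in $[(0,0),(\frac12,\frac12)]$ nor in $[(\frac12,\frac12),(1,1)]$. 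So you have only bounded $f$ on $\bigcup_i[x_{i-1},x_i]$, which is in general a proper subset of $[a,b]$. Both \Cref{p:integrability_condition} and the definitions of $L(f)$ and $U(f)$ need boundedness on all of $[a,b]$.

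The repair is short. For $x\in[a,b]$, set $y_j\defeq a+\frac jn(x-a)$ for $0\le j\le n$. These points lie in $[a,x]\subseteq[a,b]$ and satisfy $\abs{y_j-y_{j-1}}=\frac1n(x-a)\le\frac1n(b-a)$. Telescoping gives $\abs{f(x)-f(a)}\le n\eps$, hence $\abs{f(x)}\le\abs{f(a)}+n\eps$. This uses no local band preservation, so your closing remark about where that hypothesis enters then stands. Alternatively, you can keep your bound $\sum_i\abs{f(x_i)}+\eps$ by splitting $E$ into the finitely many bands on which $\bP(x_{i-1})\le\bP(x)\le\bP(x_i)$, and applying part (iii) of the proposition on locally band preserving functions in Section 2. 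That route does use local band preservation, contrary to your last sentence. One further remark on your final step: since $\Eps$ need not be directed, order continuity of $u\mapsto u(b-a)$ alone does not give $\inf\Eps(b-a)=0$. You also need that multiplication by $b-a\ge0$ is a lattice homomorphism in an $f$-algebra, so you can pass to finite infima of $\Eps$. Alternatively, take $\Eps\downarrow0$ from the start, which the paper notes is allowed.
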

\begin{proof}
Since $f$ is uniformly order continuous, there exists an $\Eps \searrow 0$ such that for every $\eps \in \Eps$ there exists an $N_\eps \in \bN$ such that
\[ x, y \in [a, b] \text{ and } \abs{x - y} \leq \frac{1}{N_\eps} (b - a) \implies \abs{f(x) - f(y)} \leq \eps. \]
Let $\eps \in \Eps$, and define $P \defeq \{x_0, \dots, x_{N_\eps} \}$, where $x_k \defeq a + \frac{k}{N_\eps} (b - a)$ for $0 \leq k \leq N_\eps$. Then,
\begin{align*}
U(f, P) - L(f, P)
& = \sum_{k = 1}^{N_\eps} \left( M_{[x_{k-1}, x_k]} - m_{[x_{k-1}, x_k]} \right) (x_k - x_{k - 1}) \\
& \leq \sum_{k = 1}^{N_\eps} \eps (x_k - x_{k-1}) \\
& = \eps (b - a).
\end{align*}
Since $(b - a) \Eps \searrow 0$, $f$ is integrable.
\end{proof}

\begin{remark} \label{r:open_question}
Note that it is an open question whether the assumption of uniform order continuity in \Cref{p:uniformly_continuous_integrable} can be weakened to order continuity. In \cite[Example 4.1]{paper1}, an example is given of an order continuous function $f \colon [a, b] \to E$ that is unbounded (and hence not uniformly order continuous), but we have no such examples for locally band preserving functions. Thus, there are three possibilities for an order continuous and locally band preserving function $f \colon [a, b] \to E$: (i) $f$ must be uniformly order continuous (and hence integrable), (ii) $f$ must be integrable but need not be uniformly order continuous, or (iii) $f$ need not be integrable.
\end{remark}

\section{Integrals with incomparable endpoints}

We now turn our attention to defining integrals where the endpoints $a$ and $b$ are not necessarily comparable. Classically, we have that if $a \not\leq b$, then $b < a$ and we define $\int_a^b f(x) dx := - \int_b^a f(x)$. In our setting, we instead have the decomposition
\[ E = B_{a < b} \oplus B_{b < a} \oplus B_{a = b}. \]

Intuitively, we can integrate normally where $a < b$, we must integrate from $b$ to $a$ and add a minus sign where $b < a$, and the integral must be zero where $a = b$. This informs the following definition.

\begin{definition}
Let $f \colon [a \wedge b, a \vee b] \to E$ be order bounded and locally band preserving, where $a, b \in E$. We say that $f$ is \emph{Riemann integrable from $a$ to $b$} if $f$ is integrable on $[a \wedge b, a \vee b]$. In this case, we define
\[ \int_a^b f(x) dx \defeq \bP_{a < b} \left( \int_{a \wedge b}^{a \vee b} f(x) dx \right) - \bP_{b < a} \left( \int_{a \wedge b}^{a \vee b} f(x) dx \right). \]
\end{definition}

The next proposition is an immediate consequence of the defintion.

\begin{proposition}
Let $f \colon [a \wedge b, a \vee b] \to E$ be order bounded, locally band preserving, and integrable from $a$ to $b$. Then
\[ \abs*{\int_a^b f(x) dx} = \abs*{\int_{a \wedge b}^{a \vee b} f(x) dx}. \]
\end{proposition}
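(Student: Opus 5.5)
Write $J := \int_{a \wedge b}^{a \vee b} f(x)\,dx$, so that by definition $\int_a^b f(x)\,dx = \bP_{a<b}(J) - \bP_{b<a}(J)$. The plan is to show that $J$ lives entirely in the band $B_{a<b} \oplus B_{b<a}$, i.e.\ that $\bP_{a=b}(J) = 0$. Once this is known, $J = \bP_{a<b}(J) + \bP_{b<a}(J)$ with the two summands lying in disjoint bands. For disjoint elements $u, v$ we have $\abs{u - v} = \abs{u} + \abs{v} = \abs{u+v}$, and band projections commute with the modulus. These two facts give
\[ \abs*{\int_a^b f(x)\,dx} = \abs{\bP_{a<b}(J)} + \abs{\bP_{b<a}(J)} = \abs{\bP_{a<b}(J) + \bP_{b<a}(J)} = \abs{J}, \]
which is the claim.

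It remains to show $\bP_{a=b}(J) = 0$, and this is the only step needing any thought. Set $\bP := \bP_{a=b}$. Then $\bP(a) = \bP(b)$, so $\bP(a \wedge b) = \bP(a) \wedge \bP(b) = \bP(a \vee b)$, using that band projections are lattice homomorphisms.

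Take any partition $P = \{x_0, \dots, x_n\}$ of $[a \wedge b, a \vee b]$. For each $i$ we have $\bP(a\wedge b) \leq \bP(x_{i-1}) \leq \bP(x_i) \leq \bP(a \vee b)$, and the two ends are equal, so $\bP(x_i - x_{i-1}) = 0$. In an $f$-algebra band projections are multiplicative in the sense $\bP(uv) = u\,\bP(v)$; this holds because bands are ring ideals, or alternatively by order continuity of multiplication on the decomposition. Hence
\[ \bP(L(f,P)) = \sum_i m_i\, \bP(x_i - x_{i-1}) = 0. \]

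Finally, $\bP$ is order continuous and $L(f,P) \uparrow J$ by \Cref{p:partitions_directed_darboux_sums_monotone}. Therefore $\bP(J) = \lim_P \bP(L(f,P)) = 0$.

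The main obstacle is only justifying $\bP(m_i (x_i - x_{i-1})) = m_i\, \bP(x_i - x_{i-1})$. In a Dedekind complete $f$-algebra every band is an order ideal and a ring ideal, and $\bP$ is given by multiplication-compatible decomposition, so this is standard. It is also used implicitly in \Cref{l:darboux_sum_subset_inequality}.
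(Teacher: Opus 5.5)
Your proof is correct and matches the paper's approach; the paper gives no proof, calling the statement an immediate consequence of the definition. You supply the one detail the paper leaves implicit: $\bP_{a=b}\bigl(\int_{a\wedge b}^{a\vee b} f(x)\,dx\bigr)=0$, because every partition increment $x_i - x_{i-1}$ vanishes under $\bP_{a=b}$ and bands are ring ideals in the Archimedean $f$-algebra $E$. After that, disjointness of $B_{a<b}$ and $B_{b<a}$ gives the equality of moduli.
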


Next we provide generalisations of the properties from \Cref{s:properties_of_the_integral}, where we now allow the endpoints of the integral to be incomparable. The proofs have been omitted as the generalisations follow immediately.

\begin{proposition}
Let $f \colon [a \wedge c, b \vee d] \to E$ be order bounded and locally band preserving with $\bP(a) = \bP(c)$ and $\bP(b) = \bP(d)$. If $f$ is integrable both from $a$ to $b$ and from $c$ to $d$, then
\[ \bP \left( \int_a^b f(x) dx \right) = \bP \left( \int_c^d f(x) dx \right). \]
\end{proposition}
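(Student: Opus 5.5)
The plan is to reduce to the comparable-endpoint version proved in \Cref{s:properties_of_the_integral}. That result says: if $\bP(a)=\bP(c)$ and $\bP(b)=\bP(d)$ with $a\le b$, $c\le d$, then $\bP\bigl(\int_a^b f\bigr)=\bP\bigl(\int_c^d f\bigr)$. We apply it to the intervals $[a\wedge b, a\vee b]$ and $[c\wedge d, c\vee d]$. Since band projections are lattice homomorphisms, $\bP(a\wedge b)=\bP(a)\wedge\bP(b)=\bP(c)\wedge\bP(d)=\bP(c\wedge d)$, and similarly for the suprema. The domain $[a\wedge c, b\vee d]$ should be read as containing both intervals, as the definition of integrability from $a$ to $b$ requires. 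Both integrals exist by hypothesis, so the earlier proposition gives
\[ \bP\left(\int_{a\wedge b}^{a\vee b} f(x)\,dx\right)=\bP\left(\int_{c\wedge d}^{c\vee d} f(x)\,dx\right). \]
Call this common value $J$.

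Next I would compare the band projections in the definition. The aim is to show
\[ \bP\bP_{a<b}=\bP\bP_{c<d} \quad\text{and}\quad \bP\bP_{b<a}=\bP\bP_{d<c}. \]
The point is that $B\cap B_{a<b}$ depends only on $\bP(b-a)=\bP(d-c)$.

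To see this, use $\bP(b-a)^+=\bP(d-c)^+$, which holds because $\bP$ is a lattice homomorphism. The band $B_{a<b}$ is the carrier band of $(b-a)^+$, i.e. the band generated by $(b-a)^+$: it is the largest band on which $b-a$ is a weak order unit, so $b-a$ is positive with full support there. A band projection satisfies $\bP\bigl(B_{\{u\}}\bigr)=B_{\{\bP u\}}$, since $\bP$ commutes with all band projections in a Dedekind complete space. Hence $\bP\bP_{a<b}$ is the projection onto the carrier of $\bP((b-a)^+)$, which equals the carrier of $\bP((d-c)^+)$, and this is $\bP\bP_{c<d}$. The same argument with $(b-a)^-$ handles $\bP_{b<a}$ and $\bP_{d<c}$.

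Finally, band projections commute, so
\[ \bP\left(\int_a^b f(x)\,dx\right)=\bP_{a<b}J-\bP_{b<a}J, \]
and the right-hand side is symmetric in $(a,b)$ and $(c,d)$, which proves the claim.

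The main obstacle is the identification $\bP\bP_{a<b}=\bP\bP_{c<d}$. This requires justifying the characterisation of $B_{a<b}$ as the carrier band of $(b-a)^+$, which follows from the definition via the projection property. If that is awkward to cite, I would argue directly from maximality. $\bP\bP_{a<b}(d-c)=\bP\bP_{a<b}(b-a)$ is a weak unit in the band $B\cap B_{a<b}$, so $B\cap B_{a<b}\subseteq B_{c<d}$. Symmetrically $B\cap B_{c<d}\subseteq B_{a<b}$, and intersecting both with $B$ gives the equality.
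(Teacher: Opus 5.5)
Your proposal is correct and is essentially the argument the paper intends: the paper omits this proof as an immediate consequence of the comparable-endpoint proposition, and you carry out exactly that reduction on $[a\wedge b, a\vee b]$ and $[c\wedge d, c\vee d]$. The detail the paper leaves implicit is the pair of identities $\bP\bP_{a<b}=\bP\bP_{c<d}$ and $\bP\bP_{b<a}=\bP\bP_{d<c}$, and you justify it correctly, either through the carrier-band description of $B_{a<b}$ or through the maximality argument.
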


\begin{proposition}
Let $f \colon [a \wedge b, a \vee b] \to E$ be order bounded and locally band preserving with $c \in [a \wedge b, a \vee b]$. Then $f$ is integrable from $a$ to $b$ if and only if $f$ is integrable from $a$ to $c$ and from $c$ to $b$. In this case,
\[ \int_a^b f(x) dx = \int_a^c f(x) dx + \int_c^b f(x) dx. \]
\end{proposition}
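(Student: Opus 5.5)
The plan is to reduce everything to the comparable-endpoint results of \Cref{s:properties_of_the_integral}, working band by band in the decomposition
\[ E = B_{a < b} \oplus B_{b < a} \oplus B_{a = b}. \]
Write $\bP_1, \bP_2, \bP_3$ for the three band projections. Since $a \wedge b \leq c \leq a \vee b$, the endpoints of the relevant intervals can be read off on each band.
\begin{itemize}
\item On $B_1$: $\bP_1(a\wedge c) = \bP_1(a) = \bP_1(a\wedge b)$ and $\bP_1(a \vee c) = \bP_1(c)$.
\item On $B_2$: $\bP_2(a \wedge c) = \bP_2(c)$ and $\bP_2(a\vee c) = \bP_2(a) = \bP_2(a\vee b)$.
\item On $B_3$: $\bP_3(a) = \bP_3(b) = \bP_3(c)$.
\end{itemize}
The analogous identities hold for the pair $c, b$.

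The first step is an auxiliary ``locality of integrability'' lemma. Let $[u, v]$ and $[u', v']$ be intervals in the domain with $\bP(u) = \bP(u')$ and $\bP(v) = \bP(v')$. Then every partition of one interval has a partition of the other with the same $\bP$-image. By \Cref{l:lbp_extrema_preserved}(ii), exactly as in the proof of the first proposition of \Cref{s:properties_of_the_integral}, the projected Darboux differences $\bP(U(f,P) - L(f,P))$ then range over the same set for both intervals. By \Cref{p:partitions_directed_darboux_sums_monotone} these nets are decreasing, so $\bP(U(f) - L(f))$ is the same for both intervals. In particular, this quantity is zero for one interval if and only if it is zero for the other.

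Since $f$ is integrable on $[u,v]$ if and only if $\bP_i(U(f) - L(f)) = 0$ for $i = 1, 2, 3$, integrability becomes a bandwise condition. I would then argue as follows.
\begin{itemize}
\item Integrability from $a$ to $b$ means integrability on $[a\wedge b, a\vee b]$.
\item By the comparable splitting proposition, this is equivalent to integrability on $[a\wedge b, c]$ and on $[c, a \vee b]$.
\item On $B_1$, the interval $[a\wedge b, c]$ matches $[a\wedge c, a\vee c]$, and $[c, a\vee b]$ matches $[c\wedge b, c\vee b]$.
\item On $B_2$ the roles are swapped: $[a\wedge b,c]$ matches $[c\wedge b, c\vee b]$, and $[c, a\vee b]$ matches $[a\wedge c, a\vee c]$.
\item On $B_3$ every interval is degenerate after projection, so every $\bP_3$-difference is zero.
\end{itemize}
Combining these gives the equivalence of integrability statements.

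For the formula, I would compare the $\bP_i$-components of both sides.

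On $B_3$, all three integrals have zero $\bP_3$-component. This is because $\bP_3(a) = \bP_3(b) = \bP_3(c)$ forces $\bP_3 \leq \bP_{a=b}, \bP_{a=c}, \bP_{c=b}$, and these band projections annihilate the defining expression.

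On $B_1$, I would use $\bP_1(\int_a^b) = \bP_1(\int_{a\wedge b}^{a\vee b}) = \bP_1(\int_{a\wedge b}^c) + \bP_1(\int_c^{a\vee b})$. I then split $B_1$ further by $B_{a<c}$ and $B_{a=c}$, and by $B_{c<b}$ and $B_{c=b}$. On $B_1 \cap B_{a<c}$, the endpoint-projection proposition (the generalised version stated just before) identifies the first term with $\int_a^c$; on $B_1 \cap B_{a=c}$ both sides vanish. The second term is handled the same way. $B_2$ is symmetric, with the minus signs produced by $\bP_{b<a}$, $\bP_{c<a}$ and $\bP_{b<c}$.

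The main obstacle is the bookkeeping of these sub-bands. One needs $B_1 \subseteq B_{a \leq c} \cap B_{c \leq b}$ and $B_1 \cap B_{c < a} = \{0\}$, and similarly for $B_2$, so that the sign conventions in the definition of $\int_a^c$ and $\int_c^b$ line up with those of $\int_a^b$. I would verify these inclusions directly from $a\wedge b \leq c \leq a\vee b$ together with the characterisation of $B_{x<y}$ and $B_{x \leq y}$ as largest bands, refining to a common decomposition via \Cref{l:totally_ordered_decomposition} applied to $\{a,b,c\}$.
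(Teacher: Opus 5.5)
Your proof is correct. It is the bandwise reduction to the comparable-endpoint results over $E = B_{a<b}\oplus B_{b<a}\oplus B_{a=b}$ that the paper has in mind when it omits this proof as an immediate generalisation. Your locality-of-integrability lemma is also a genuine addition: the forward implication follows from the comparable splitting proposition alone, but the converse needs to transfer integrability, and not just the value of the integral, between intervals whose endpoints agree only after projection, which the paper never makes explicit.
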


\begin{proposition}
Let $f, g \colon [a \wedge b, a \vee b] \to E$ be order bounded and locally band preserving. If both $f$ and $g$ are integrable from $a$ to $b$ and $c \in E$, then the following holds.
\begin{enumerate}[(i)]
\item $f + g$ is integrable from $a$ to $b$ and
\[ \int_a^b (f+g)(x) dx = \int_a^b f(x) dx + \int_a^b g(x) dx. \]

\item $cf$ is integrable on from $a$ to $b$ and
\[ \int_a^b (cf)(x) dx = c \int_a^b f(x) dx. \]

\item $\abs{f}$ is integrable from $a$ to $b$.

\item $fg$ is integrable from $a$ to $b$.
\end{enumerate}
\end{proposition}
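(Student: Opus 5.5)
The statement concerns integrals from $a$ to $b$ with possibly incomparable endpoints. Every claim will be reduced to the corresponding result of \Cref{s:properties_of_the_integral} on the comparable interval $[a \wedge b, a \vee b]$. The reduction uses the definition
\[ \int_a^b h(x)\,dx = \bP_{a<b}\Big(\int_{a\wedge b}^{a\vee b} h(x)\,dx\Big) - \bP_{b<a}\Big(\int_{a\wedge b}^{a\vee b} h(x)\,dx\Big), \]
together with the fact that band projections are linear. Throughout, write $J(h) := \int_{a\wedge b}^{a\vee b} h(x)\,dx$ and $\bQ := \bP_{a<b} - \bP_{b<a}$. Then $\int_a^b h = \bQ(J(h))$ whenever $h$ is integrable on $[a\wedge b, a\vee b]$.

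First I settle integrability, which by definition means integrability on $[a \wedge b, a \vee b]$. The sum $f+g$ and the product $cf$ are again order bounded. They are also locally band preserving: $\bP(x)=\bP(y)$ gives $\bP f(x) = \bP f(y)$ and $\bP g(x)=\bP g(y)$. Since band projections on an $f$-algebra are multiplicative in the sense $\bP(uv) = u\,\bP(v)$, we get $\bP(c f(x)) = c\,\bP f(x)$. Similarly, $\bP\abs{f(x)} = \abs{\bP f(x)}$ and $\bP(f(x)g(x)) = \bP f(x)\, \bP g(x)$, so $\abs{f}$ and $fg$ are locally band preserving as well.

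With this in hand, integrability of $f+g$ and $cf$ follows from \Cref{p:integral_properties}(i),(ii). Integrability of $\abs{f}$ follows from \Cref{p:integral_properties}(iv). Integrability of $fg$ follows from the preceding proposition on products. This settles (iii) and (iv), and the integrability parts of (i) and (ii).

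Next I verify the formulas. For (i), \Cref{p:integral_properties}(i) gives $J(f+g) = J(f)+J(g)$, and applying the linear operator $\bQ$ yields the claimed identity. For (ii), \Cref{p:integral_properties}(ii) gives $J(cf) = cJ(f)$. It then remains to check that $\bQ(cJ(f)) = c\,\bQ(J(f))$, i.e.\ that band projections commute with multiplication by $c$.

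This commutation is the only step needing care. In an Archimedean $f$-algebra, every band is a ring ideal and multiplication by $c$ is band preserving, so $\bP(cu) = c\,\bP(u)$ for every band projection $\bP$. I would argue it as follows. Write $u = \bP u + \bP^d u$. Then $c\,\bP u \in B$ and $c\,\bP^d u \in B^d$, because orthomorphisms preserve bands. Hence the $B$-component of $cu$ is $c\,\bP u$, which is exactly $\bP(cu) = c\,\bP(u)$. Applying this to $\bP_{a<b}$ and $\bP_{b<a}$ and subtracting completes (ii).
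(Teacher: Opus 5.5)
Your proposal is correct and is the argument the paper intends: the paper omits the proof as immediate, meaning one applies the results of \Cref{s:properties_of_the_integral} on $[a\wedge b, a\vee b]$ and then passes the identities through $\bP_{a<b}-\bP_{b<a}$, using linearity of band projections and $\bP(cu)=c\,\bP(u)$. Your explicit check that $f+g$, $cf$, $\abs{f}$ and $fg$ are again locally band preserving, which is needed for the definition to apply, fills in a detail the paper leaves implicit.
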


\section{The Fundamental Theorem of Calculus}

In this section, we first provide a Mean Value Theorem for Integrals in \Cref{t:MVT_for_integrals}, which will be used to prove the Fundamental Theorem of Calculus---of which parts 1 and 2 are \Cref{t:FTOC,t:FTOC2}, respectively. As this section connects our theory of integration to the theory of differentiation from \cite{differentiation}, we now assume $E$ to be a Dedekind complete $\Phi$-algebra.

\begin{theorem} \label{t:MVT_for_integrals}
Let $f \colon [a, b] \to E$ be locally band preserving and order continuous. If $f$ is integrable, then, for any $x, y \in [a, b]$, there exists $c \in [x \wedge y, x \vee y]$ such that
\[ (y - x) f(c) = \int_x^y f(z) dz. \]
\end{theorem}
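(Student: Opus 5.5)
First reduce to comparable endpoints. Write $u \defeq x \wedge y$ and $v \defeq x \vee y$. Since $f$ is integrable on $[a,b]$, it is integrable on $[u,v]$ by the additivity proposition of Section~\ref{s:properties_of_the_integral} (applied twice). By the definition of integrals with incomparable endpoints, $\int_x^y f = \bP_{x<y}(I) - \bP_{y<x}(I)$ with $I \defeq \int_u^v f(z)\,dz$. Because $y - x = \bP_{x<y}(v-u) - \bP_{y<x}(v-u)$ and $\bP_{x=y}(v-u)=0$, it suffices to find $c\in[u,v]$ with $(v-u)f(c) = I$. Indeed, band projections commute with multiplication by elements in an $f$-algebra, so applying $\bP_{x<y}$ and $\bP_{y<x}$ to this identity and subtracting gives $(y-x)f(c) = \int_x^y f$.

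For the comparable case, apply the Extreme Value Theorem~\ref{t:EVT} to $f|_{[u,v]}$, which is still order continuous and locally band preserving. This gives $p,q\in[u,v]$ with $m \defeq f(p) \le f(z) \le f(q) \defeq M$ for all $z\in[u,v]$. Using the trivial partition $\{u,v\}$ and monotonicity of the integral, we get $m(v-u) \le I \le M(v-u)$. The remaining task is to produce $c$ between $p$ and $q$ with $(v-u)f(c) = I$. The natural route is the Intermediate Value Theorem~\ref{t:IVT} applied to $f$ restricted to $[p\wedge q, p\vee q]$: it suffices to find $w\in[m\wedge M, m\vee M]=[m,M]$ with $(v-u)w = I$.

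The main obstacle is this "division by $v-u$", since $v-u$ need not be invertible or even a weak unit. I would split $E = B_{u<v}\oplus B_{u=v}$. On $B_{u=v}$ we have $\bP_{u=v}(I)=0$ by locally band preservation (the earlier proposition comparing $\int_u^v$ with $\int_u^u=0$ on a band), so there any $w$ works; take $\bP_{u=v}(w)\defeq\bP_{u=v}(m)$. On $B_{u<v}$, $v-u$ is a weak order unit. In a Dedekind complete $\Phi$-algebra I would define $w$ there as the supremum of $\{t\in[m,M] : t(v-u)\le I\}$ projected onto $B_{u<v}$, or more concretely as the order limit of $I\cdot(v-u+\tfrac1n)^{-1}$ truncated to $[m,M]$. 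Then I would verify $(v-u)w=I$ using order continuity of multiplication and the bounds $m(v-u)\le I\le M(v-u)$. One could instead work in the universal completion, where weak units are invertible, and note that the quotient lies in $[m,M]\subseteq E$. Once $w$ is obtained, the IVT yields $c\in[p\wedge q,p\vee q]\subseteq[u,v]$ with $f(c)=w$. Finally, since the IVT must be used on an interval that may have degenerate bands, I would check that Theorem~\ref{t:IVT} requires only $p\wedge q\le p\vee q$, which it does.
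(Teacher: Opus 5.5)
Your reduction to comparable endpoints (via $y-x=\bP_{x<y}(v-u)-\bP_{y<x}(v-u)$ and projecting $(v-u)f(c)=I$) matches the paper. So does your use of \Cref{t:EVT} to get $(v-u)f(p)\le I\le (v-u)f(q)$.

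The routes split at the last step. You try to \emph{divide} $I$ by $v-u$, producing $w\in[m,M]$ with $(v-u)w=I$, and then apply \Cref{t:IVT} to $f$. The paper never divides. It applies \Cref{t:IVT} directly to $g(w)\defeq(v-u)f(w)$. This $g$ is order continuous and locally band preserving, because multiplication by the fixed element $v-u$ is an orthomorphism of $E$: it is order continuous and commutes with band projections. Hence $g(p)\le I\le g(q)$ immediately gives $c$ with $(v-u)f(c)=I$.

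Your detour is not wrong, because the quotient $w$ does exist. One way is to invert $v-u$ on the bands generated by $(v-u-\tfrac1n\mathbf{1})^+$, where it dominates $\tfrac1n$ times the unit, and pass to the supremum. Another is to use invertibility of weak units in the universal completion together with the fact that a Dedekind complete $E$ is an ideal there. However, this is the only non-routine step of your argument, and you leave it as a menu of possible constructions rather than a verification.

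It also uses more structure than needed: the unit and the inversion theory of uniformly complete $\Phi$-algebras, or the extension of the multiplication to the universal completion. The paper's trick needs only that $z\mapsto(v-u)z$ is order continuous and band preserving. What your version would buy is a slightly stronger intermediate fact, an ``average value'' $w\in[m,M]$ of $f$ on $[u,v]$. For the theorem this is superfluous: replacing $f$ by $(v-u)f$ in the IVT step removes your ``main obstacle'' entirely.

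One real subtlety in either version, which you and the paper both gloss over, is not degenerate bands. It is that \Cref{t:IVT} on $[p\wedge q,p\vee q]$ controls values between $g(p\wedge q)$ and $g(p\vee q)$. Local band preservation, applied on $B_{p\le q}$ and $B_{q<p}$, gives $g(p\wedge q)\wedge g(p\vee q)=g(p)\wedge g(q)$ and likewise for $\vee$. That identity is what makes the bound $g(p)\le I\le g(q)$ usable.
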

\begin{proof}
Assume $x \leq y$. By \Cref{t:EVT}, there exist $d_1, d_2 \in [x, y]$ such that $f(d_1) \leq f(z) \leq f(d_2)$ for all $z \in [x, y]$. Since $$(y - x) f(d_1) \leq \int_x^y f(z) dz \leq (y - x) f(d_2),$$ applying \Cref{t:IVT} to the function $w \mapsto (y - x) f(w)$ shows that there exists $c \in [x, y]$ such that $(y - x) f(c) = \int_x^y f(z) dz$.

Let $x, y \in [a, b]$ without restriction. Then there exists $c \in [x \wedge y, x \vee y]$ such that
\[ (x \vee y - x \wedge y) f(c) = \int_{x \wedge y}^{x \vee y} f(z) dz. \]
It follows that
\begin{align*}
\int_x^y f(z) dz
& = \bP_{x < y} \left( \int_{x \wedge y}^{x \vee y} f(z) dz \right) - \bP_{y < x} \left( \int_{x \wedge y}^{x \vee y} f(z) dz \right) \\
& = \bP_{x < y} ((x \vee y - x \wedge y) f(c)) - \bP_{y < x} ((x \vee y - x \wedge y) f(c)) \\
& = \bP_{x < y} ((y - x) f(c)) - \bP_{y < x} ((x - y) f(c)) \\
& = (y - x) f(c). \qedhere
\end{align*}
\end{proof}

\begin{theorem} \label{t:FTOC}
Let $a \ll b$. Suppose $f \colon [a, b] \to E$ is order continuous, locally band preserving, and integrable. Then $F(x) \defeq \int_a^x f(z) dz$ is uniformly continuous on $[a, b]$ and super order differentiable on $(a, b)$ with $F' = f$.
\end{theorem}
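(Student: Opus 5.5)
The plan rests on the additivity of the integral with incomparable endpoints and on \Cref{t:MVT_for_integrals}. Fix $x, y \in [a, b]$. Since $a \leq x \wedge y$, the additivity proposition of Section 5 applies with $c := x$ lying between $a$ and $y$. Integrability of $f$ on $[a,b]$ passes to every subinterval, by the splitting proposition of Section 4, so all integrals below exist. We therefore get
\[ F(y) - F(x) = \int_x^y f(z)\,dz, \]
where the right-hand side is understood in the sense of Section 5 when $x$ and $y$ are incomparable. By \Cref{t:MVT_for_integrals} there is then some $c_{x,y} \in [x \wedge y, x \vee y]$ with
\[ F(y) - F(x) = (y - x) f(c_{x,y}). \]
Everything else follows from this identity.

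\emph{Uniform continuity.} Let $K \in E^+$ be an order bound for $\abs{f}$ on $[a,b]$; this exists by \Cref{t:EVT}, or simply because $f$ is integrable and hence order bounded. The identity gives $\abs{F(y) - F(x)} \leq K\abs{y - x}$. Given $\Delta \downarrow 0$, I would take $\Eps := K\Delta$. For $\eps = K\delta$ we have $\abs{x - y} \leq \delta \implies \abs{F(x) - F(y)} \leq \eps$. Since multiplication by $K \geq 0$ is order continuous in an Archimedean $f$-algebra, $K\Delta \searrow 0$. By \Cref{p:eps_delta_continuity}(ii), $F$ is uniformly order continuous.

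\emph{Super order differentiability.} Fix $c \in (a,b)$ and take $r := (c - a) \wedge (b - c) \gg 0$, so that $(c - r, c + r) \subseteq [a,b]$. For $x \in [a,b]$ the identity gives
\[ \abs{F(x) - F(c) - (x - c) f(c)} = \abs{x - c}\,\abs{f(c_{x}) - f(c)}, \]
where $c_x \in [x \wedge c, x \vee c]$. In particular $\abs{c_x - c} \leq \abs{x - c}$. Given $\Delta \downarrow 0$, order continuity of $f$ at $c$, via \Cref{p:eps_delta_continuity}(i), yields $\Eps \searrow 0$ such that for each $\eps \in \Eps$ there is $\delta \in \Delta$ with $\abs{z - c} \leq \delta \implies \abs{f(z) - f(c)} \leq \eps$. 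If $\abs{x - c} \leq \delta$, then $\abs{c_x - c} \leq \delta$, so $\abs{f(c_x) - f(c)} \leq \eps$. Multiplying by $\abs{x - c} \geq 0$ gives the required inequality for every $x \in \dom(F)$, which is exactly super order differentiability with $F'(c) = f(c)$.

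The step I expect to need the most care is the identity $F(y) - F(x) = \int_x^y f$ for incomparable $x, y$. This requires checking that the additivity proposition with incomparable endpoints really applies with the integral from $a$ to $x$ reducing to the ordinary integral because $a \leq x$. It also requires checking that the mean value point $c_{x,y}$ obtained from \Cref{t:MVT_for_integrals} satisfies $\abs{c_{x,y} - x} \leq \abs{y - x}$. The latter holds because $x \wedge y \leq c_{x,y} \leq x \vee y$ gives $\abs{c_{x,y} - x} \leq x \vee y - x \wedge y = \abs{y - x}$. Everything else is direct.
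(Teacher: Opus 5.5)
Your argument is essentially the paper's. The Mean Value Theorem for Integrals plus order continuity of $f$ at the point gives super order differentiability, and a bound $\abs{F(x)-F(y)} \leq K\abs{x-y}$ with $\Eps = K\Delta$ gives uniform order continuity. You get that bound from the mean value identity; the paper gets it from $\abs{\int_{x\wedge y}^{x\vee y} f(z)\,dz} \leq K(x\vee y - x\wedge y)$. You also check $r \gg 0$ and $(c-r,c+r)\subseteq[a,b]$, which the paper leaves implicit.

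One justification is wrong, however. The Section 5 additivity proposition with endpoints $a, y$ and middle point $x$ requires $x \in [a\wedge y, a\vee y] = [a,y]$, i.e.\ $x \leq y$. That is exactly what fails when $x$ and $y$ are incomparable.

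Use the middle point $x\wedge y$ instead. It lies in $[a,y]$, in $[a,x]$ and in $[x\wedge y, x\vee y]$, so three applications of additivity give
\[ F(y) - F(x) = \int_{x\wedge y}^{y} f(z)\,dz - \int_{x\wedge y}^{x} f(z)\,dz \quad\text{and}\quad \int_x^y f(z)\,dz = \int_x^{x\wedge y} f(z)\,dz + \int_{x\wedge y}^{y} f(z)\,dz. \]
The identity $\int_x^{x\wedge y} f(z)\,dz = -\int_{x\wedge y}^{x} f(z)\,dz$ is immediate from the definition, since interchanging the endpoints interchanges $\bP_{x<x\wedge y}$ and $\bP_{x\wedge y<x}$.

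The other point you flag is fine. For $a \leq x$, the Section 5 integral $\int_a^x$ agrees with the ordinary one because $\bP_{a=x}\bigl(\int_a^x f(z)\,dz\bigr) = 0$. This follows by comparing $[a,x]$ with $[a,a]$ via the first proposition of Section 4.

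The paper itself uses $F(y)-F(x) = \int_x^y f(z)\,dz$ without comment. With this repair your proof is complete.
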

\begin{proof}
Fix $x \in (a, b)$. Let $\Delta \downarrow 0$. Since $f$ is order continuous at $x$, there exists an $\Eps \searrow 0$ such that for every $\eps \in \Eps$ there exists a $\delta \in \Delta$ satisfying
\[ y \in [a, b] \text{ and } \abs{y - x} \leq \delta \implies \abs{f(y) - f(x)} \leq \eps. \]
Let $y \in [a, b]$ be such that $\abs{y - x} \leq \delta$. By \Cref{t:MVT_for_integrals}, there exists a $c \in [x \wedge y, x \vee y]$ such that
\[ F(y) - F(x) = \int_x^y f(z) dz = (y - x) f(c). \]
Since $\abs{c - x} \leq \delta$, it follows that
\begin{align*}
\abs{F(y) - F(x) - (y - x)f(x)}
& = \abs{y - x} \abs{f(c) - f(x)} \\
& \leq \abs{y - x} \eps.
\end{align*}
This shows that $F$ is super order differentiable on $(a, b)$ with $F' = f$.

To see that $F$ is uniformly order continuous on $[a, b]$, note that $f$ is order bounded by \Cref{t:EVT}, say $\abs{f(x)} \leq K$ for all $x \in [a, b]$. Define $\Eps \defeq K \Delta$. Let $\eps = K \delta \in \Eps$, then if $x, y \in [a, b]$ with $\abs{x - y} \leq \delta$,

\begin{align*}
\abs*{F(x) - F(y)}
& = \abs*{\int_y^x f(z) dz} = \abs*{\int_{x \wedge y}^{x \vee y} f(z) dz}  \leq K \abs*{x \vee y - x \wedge y} \\
& = K \abs{x - y}  \leq K \delta = \eps.
\end{align*}
This proves $F$ is uniformly continuous on $[a, b]$.
\end{proof}

\begin{theorem} \label{t:FTOC2}
Suppose $a \ll b$. Let $f \colon [a, b] \to E$ be order bounded and locally band preserving, and let $F \colon [a, b] \to E$ be order continuous on $[a, b]$, order differentiable on $(a, b)$ with $F' = f$, and locally band preserving. If $f$ is integrable, then, for any $x, y \in [a, b]$,
\[ \int_x^y f(z) dz = F(y) - F(x). \]
\end{theorem}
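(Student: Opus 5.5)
The plan follows the classical proof of the second Fundamental Theorem of Calculus, with the Mean Value Theorem (\Cref{t:MVT}) doing the main work. We first treat the case $x \leq y$ and then pass to arbitrary $x, y$ using the band decomposition $E = B_{x<y} \oplus B_{y<x} \oplus B_{x=y}$.

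\emph{Case $x \leq y$.} Let $P = \{x = z_0 \leq z_1 \leq \cdots \leq z_n = y\}$ be any partition of $[x, y]$. For each $1 \leq i \leq n$, \Cref{t:MVT} applies to $F$: it is order continuous on $[a,b]$, order differentiable on $(a,b)$, and locally band preserving. It yields $c_i \in [z_{i-1}, z_i]$ with
\[ F(z_i) - F(z_{i-1}) = (z_i - z_{i-1}) F'(c_i) = (z_i - z_{i-1}) f(c_i). \]
Since $z_i - z_{i-1} \geq 0$ and $m_i \leq f(c_i) \leq M_i$, multiplying in the $f$-algebra preserves the inequalities. Summing telescopically gives
\[ L(f, P) \leq F(y) - F(x) \leq U(f, P). \]
This holds for every partition $P$. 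Taking the supremum over $P$ on the left and the infimum on the right, and using integrability of $f$ on $[x, y]$ (which follows from integrability on $[a, b]$ by the additivity proposition applied twice), we get $\int_x^y f(z)\,dz = F(y) - F(x)$.

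One subtlety needs checking here. \Cref{t:MVT} requires $f' $ to be taken at points $c$ where $F$ is order differentiable, yet $c_i$ may touch $a$ or $b$ on some bands. Two observations handle this. First, \Cref{t:MVT} as stated already produces the identity $(y-x)f'(c) = f(y)-f(x)$ with $c \in [x \wedge y, x \vee y] \subseteq [a,b]$. Second, $F'$ is identified with $f$ as a function on $[a, b]$. So I would interpret $F'(c_i)$ as $f(c_i)$, exactly as the theorem's conclusion is phrased, and no extra argument is needed.

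\emph{General $x, y$.} Set $u = x \wedge y$ and $v = x \vee y$. The first case gives $\int_u^v f = F(v) - F(u)$. By definition,
\[ \int_x^y f(z)\,dz = \bP_{x<y}\bigl(F(v) - F(u)\bigr) - \bP_{y<x}\bigl(F(v) - F(u)\bigr). \]
On $B_{x<y}$ we have $\bP(v) = \bP(y)$ and $\bP(u) = \bP(x)$. Because $F$ is locally band preserving, this gives $\bP_{x<y}(F(v) - F(u)) = \bP_{x<y}(F(y) - F(x))$. Symmetrically, on $B_{y<x}$ we get $\bP_{y<x}(F(v) - F(u)) = \bP_{y<x}(F(x) - F(y))$. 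On $B_{x=y}$ we have $\bP(x) = \bP(y)$, so local band preservation gives $\bP_{x=y}(F(y) - F(x)) = 0$. Adding the three band components yields $F(y) - F(x)$.

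The main obstacle is the first step, namely ensuring that \Cref{t:MVT} applies on each subinterval $[z_{i-1}, z_i]$. This should be harmless, since the theorem is stated for arbitrary $x, y \in [a,b]$ with $a \ll b$, so we use it directly on $[a, b]$ with endpoints $z_{i-1}, z_i$ rather than restricting $F$. Everything else is routine.
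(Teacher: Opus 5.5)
Your proposal is correct and follows the paper's proof essentially step for step. You apply \Cref{t:MVT} to $F$ on each subinterval of a partition of $[x,y]$ to obtain $L(f,P) \leq F(y)-F(x) \leq U(f,P)$, and then pass to arbitrary $x,y$ via the bands $B_{x<y}$, $B_{y<x}$, $B_{x=y}$ together with local band preservation of $F$. Your extra remarks fill in details the paper leaves implicit: integrability on $[x,y]$ via additivity, and the explicit vanishing of the $B_{x=y}$ component. Reading $F'(c_i)$ as $f(c_i)$ is the same convention the paper uses.
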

\begin{proof}
Assume $x \leq y$. Consider any partition $P \defeq \{x_0, \dots, x_n\}$ of $[x, y]$. For $1 \leq i \leq n$, by \Cref{t:MVT}, there exists a $y_i \in [x_{i-1}, x_i]$ such that
\[ F(x_i) - F(x_{i-1}) = f(y_i)(x_i - x_{i-1}),\]
and hence
\[ m_i (x_i - x_{i-1}) \leq F(x_i) - F(x_{i-1}) \leq M_i (x_i - x_{i-1}).\]
Summing over $i$ then yields $L(f, P) \leq F(y) - F(x) \leq U(f, P)$. Taking the limit over $P$ gives us that $\int_x^y f(z) dz = F(y) - F(x)$.

Let $x, y \in [a, b]$ without restriction. Then, using that $F$ is locally band preserving,
\begin{align*}
\int_x^y f(z) dz
& = \bP_{x < y} \left( \int_{x \wedge y}^{x \vee y} f(z) dz \right) - \bP_{y < x} \left( \int_{x \wedge y}^{x \vee y} f(z) dz \right) \\
& = \bP_{x < y} (F(x \vee y) - F(x \wedge y)) - \bP_{y < x} (F(x \vee y) - F(x \wedge y)) \\
& = \bP_{x < y} (F(y) - F(x)) - \bP_{y < x} (F(x) - F(y)) \\
& = F(y) - F(x). \qedhere
\end{align*}
\end{proof}

\begin{remark}
\cite[Example 5.11]{paper1} shows that the assumption that $F$ is locally band preserving in \Cref{t:FTOC2} is necessary. Although the antiderivative $F$ defined in \Cref{t:FTOC} will always be locally band preserving due to it being super order differentiable, other antiderivatives need not be and for these the result will fail.
\end{remark}

Now that we have a Fundamental Theorem of Calculus, we can generalise classic integration techniques such as $u$-substitution and integration by parts. These are given in the following two propositions.

\begin{proposition} \label{p:u-substitution}
Let $a \ll b$ and $c \ll d$. Let $g \colon [a, b] \to E$ and $f \colon [c, d] \to E$ be locally band preserving, order continuous, and integrable. Suppose $G \colon [a, b] \to [c, d]$ is locally band preserving, order continuous on $[a, b]$, and order differentiable on $(a, b)$ with $G' = g$. If $(f \circ G) g$ is Riemann integrable on $[a, b]$, then
\[ \int_a^b f(G(x)) g(x) dx = \int_{G(a)}^{G(b)} f(u) du. \]
\end{proposition}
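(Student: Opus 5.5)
The plan is the classical one: build an antiderivative of $f$ and apply \Cref{t:FTOC2} twice. Define $F \colon [c, d] \to E$ by $F(u) \defeq \int_c^u f(z) dz$. By \Cref{t:FTOC}, $F$ is uniformly order continuous on $[c, d]$ and super order differentiable on $(c, d)$ with $F' = f$. Super order differentiability on an interval gives that $F$ is locally band preserving (\cite[Theorem 4.9 and Corollary 4.10]{paper1}); I would verify that this also covers the endpoints, or argue it directly, since $\bP(u) = \bP(v)$ gives $\bP(F(u)) - \bP(F(v)) = \bP(\int_v^u f) = 0$ by the incomparable-endpoint version of the proposition on equality of projected integrals. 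Then \Cref{t:FTOC2}, applied to $f$ and $F$ on $[c, d]$, gives $\int_{G(a)}^{G(b)} f(u) du = F(G(b)) - F(G(a))$. This uses that $G(a), G(b) \in [c, d]$, and the definition of integrals with incomparable endpoints handles the fact that $G(a)$ and $G(b)$ need not be comparable.

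Next I would set $H \defeq F \circ G \colon [a, b] \to E$. Each of the required properties should follow from the corresponding property of $F$ and $G$.
\begin{itemize}
\item $H$ is locally band preserving, since this property is closed under composition: $\bP(x) = \bP(y)$ implies $\bP(G(x)) = \bP(G(y))$, which implies $\bP(F(G(x))) = \bP(F(G(y)))$.
\item $H$ is order continuous on $[a, b]$, since compositions of order continuous maps are order continuous.
\item $H$ is order differentiable on $(a, b)$ with $H' = (f \circ G)g$, by a chain rule, cited from \cite{differentiation} as mentioned in the preliminaries.
\end{itemize}
The integrand $(f \circ G) g$ is order bounded: $f$ and $g$ are order continuous and locally band preserving, so they are bounded by \Cref{t:EVT}. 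It is also locally band preserving, as a product of locally band preserving maps. It is integrable by hypothesis. A second application of \Cref{t:FTOC2}, now to $H$ on $[a, b]$, yields $\int_a^b f(G(x)) g(x) dx = H(b) - H(a) = F(G(b)) - F(G(a))$. Combining the two identities proves the claim.

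The main obstacle is the chain rule step. The available chain rule presumably needs $F$ to be order differentiable at $G(x)$, which requires an $r \gg 0$ with $(G(x) - r, G(x) + r) \subseteq [c, d]$. But $G(x)$ might lie on the boundary of $[c, d]$ in some band, so it need not be in $(c, d)$. This is where super order differentiability helps. I would first try to show directly that $F$ satisfies the super order differentiable estimate at every point $u \in [c, d]$, with the estimate holding for all $v \in [c, d]$ near $u$. The proof of \Cref{t:FTOC} does exactly this: it combines \Cref{t:MVT_for_integrals} with order continuity of $f$ at $u$, and never uses interiority except to meet the formal requirement of an $r$. I would then run the chain rule argument by hand. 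Write
\[
F(G(y)) - F(G(x)) = (G(y) - G(x)) f(G(x)) + (G(y) - G(x))\eta ,
\]
where $\abs{\eta}$ is controlled through the continuity of $G$, and write $G(y) - G(x) = (y - x) g(x) + (y - x)\theta$. Expanding and bounding the cross terms then gives the estimate with derivative $f(G(x)) g(x)$ for $y \in (x - r, x + r)$. Here $r$ is taken from the differentiability of $G$ at $x$, and the boundedness of $g$ and $f$ absorbs the error terms.
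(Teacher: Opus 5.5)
Your argument is correct and follows the paper's skeleton: take $F(u)=\int_c^u f$, differentiate $F\circ G$, and apply \Cref{t:FTOC2}. Where you differ is the chain-rule step.

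The paper first upgrades $G$ to super order differentiability on $(a,b)$ by writing $G=G(a)+\int_a^x g$ via \Cref{t:FTOC,t:FTOC2}; this is where integrability of $g$ enters. It then cites the chain rule of \cite[Theorem 4.3(iii)]{differentiation}. As the definitions stand, that citation needs $F$ to be super order differentiable at $G(x)$, i.e.\ some $r\gg 0$ with $(G(x)-r,G(x)+r)\subseteq[c,d]$. This fails whenever $G(x)$ touches $c$ or $d$ on a nonzero band, already in $\bR$ with $G(x)=x^2$ on $[-1,1]$, $[c,d]=[0,1]$ and $x=0$. The paper's proof does not address this.

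You observe that the estimate in the proof of \Cref{t:FTOC} holds at every $u\in[c,d]$ for all $v\in[c,d]$ near $u$. Combined with a hand-made chain rule that uses only the order differentiability and order continuity of $G$ at $x$, with $r$ taken from $G$, this sidesteps the boundary issue. It also makes the upgrade of $G$ unnecessary.

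The cost is that you must do the regulator bookkeeping yourself, which your sketch leaves implicit:
\begin{enumerate}[(i)]
\item Feed the downward directed hull (finite infima) of the regulator set from the order continuity of $G$ at $x$ into the estimate for $F$ at $G(x)$.
\item Pick $\delta\in\Delta$ below the two resulting $\delta$'s.
\item Check that $\{(\abs{g(x)}+\eps_1)\eps_3+\abs{f(G(x))}\eps_1\}$ has infimum $0$. This holds because multiplication by a positive element is order continuous.
\end{enumerate}
The factorisation through $\eta$ is not needed. The direct bounds $\abs{F(G(y))-F(G(x))-(G(y)-G(x))f(G(x))}\le\abs{G(y)-G(x)}\eps_3$ and $\abs{G(y)-G(x)}\le\abs{y-x}(\abs{g(x)}+\eps_1)$ suffice.

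You also verify explicitly that $F$ is locally band preserving on all of $[c,d]$, and that the integrand is order bounded and locally band preserving. The paper leaves both points implicit.
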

\begin{proof}
Let $F(x) := \int_c^x f(u) du$. By \Cref{t:FTOC}, $F$ is uniformly continuous on $[c, d]$ and super order differentiable on $(c, d)$ with $F' = f$. Note that $G$ differs from a super order differentiable function by a constant using \Cref{t:FTOC,t:FTOC2}, and hence $G$ is also super order differentiable on $(a, b)$. By the chain rule \cite[Theorem 4.3(iii)]{differentiation}, $F \circ G$ is super order differentiable on $(a, b)$ with $(F \circ G)' = (f \circ G) g$. By assumption, $(f\circ G)g$ is integrable and using \Cref{t:FTOC2} now yields
\[ \int_a^b f(G(x)) g(x) dx = F(G(b)) - F(G(a)) = \int_{G(a)}^{G(b)} f(u) du. \]
\end{proof}

\begin{remark}
In \Cref{p:u-substitution}, it is clear that $(f \circ G) g$ is order continuous, but in light of \Cref{r:open_question} it is not known whether this guarantees $(f \circ G) g$ is integrable and we must therefore assume it.
\end{remark}

\begin{proposition}
Let $a \ll b$ and $c \ll d$. Let $g \colon [a, b] \to E$ and $f \colon [c, d] \to E$ be locally band preserving and uniformly order continuous functions. Suppose $G \colon [a, b] \to [c, d]$ is uniformly continuous and super order differentiable on $(a, b)$ with $G' = g$. Then $(f \circ G) g$ is Riemann integrable on $[a, b]$ and
\[ \int_a^b f(G(x)) g(x) dx = \int_{G(a)}^{G(b)} f(u) du. \]
\end{proposition}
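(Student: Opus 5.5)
The plan is to reduce to \Cref{p:u-substitution}. The one hypothesis there that is not given here is the integrability of $(f \circ G) g$, so the main work is to show that $(f\circ G)g$ is uniformly order continuous and locally band preserving. Then \Cref{p:uniformly_continuous_integrable} gives integrability, and the formula follows from \Cref{p:u-substitution}.

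First I would collect what the earlier results give for free.
\begin{itemize}
\item $f$ and $g$ are uniformly order continuous, hence order continuous and integrable by \Cref{p:uniformly_continuous_integrable}.
\item $G$ is super order differentiable on $(a,b)$, so by the results of \cite{paper1} quoted in the preliminaries it is locally band preserving. More precisely, it is locally band preserving on $(a,b)$. To extend this to $[a,b]$, I would use its uniform order continuity: approximate a point of $[a,b]$ by points of $(a,b)$ band by band, for instance via $x_n = \bP(x)$-type modifications inside $[a + \tfrac1n(b-a),\, b-\tfrac1n(b-a)]$.
\item $G$ is order continuous on $[a,b]$.
\end{itemize}
So every hypothesis of \Cref{p:u-substitution} holds except integrability of $(f\circ G)g$.

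Next, $f\circ G$ is locally band preserving. If $\bP(x)=\bP(y)$, then $\bP(G(x))=\bP(G(y))$, and then $\bP(f(G(x)))=\bP(f(G(y)))$. The product of two locally band preserving functions is again locally band preserving, because band projections in an $f$-algebra satisfy $\bP(uv)=\bP(u)\bP(v)$.

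The core step is uniform order continuity of the product. Since $G$ is uniformly order continuous and $f$ is uniformly order continuous, $f\circ G$ is uniformly order continuous. Indeed, if $x_\alpha-y_\alpha\to 0$, then $G(x_\alpha)-G(y_\alpha)\to 0$, hence $f(G(x_\alpha))-f(G(y_\alpha))\to0$. Next I need order boundedness of $f\circ G$ and $g$. This follows from \Cref{t:EVT}, since both are order continuous and locally band preserving on $[a,b]$; say both are bounded by $K$. Then
\[ \abs{f(G(x))g(x)-f(G(y))g(y)}\le K\abs{f(G(x))-f(G(y))}+K\abs{g(x)-g(y)}, \]
and both terms tend to $0$ along any pair of nets with $x_\alpha-y_\alpha\to0$. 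This uses that multiplication by $K$ preserves order convergence in an $f$-algebra. Hence $(f\circ G)g$ is uniformly order continuous, and \Cref{p:uniformly_continuous_integrable} gives integrability.

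I expect the main obstacle to be establishing that $G$ is locally band preserving on all of $[a,b]$, including the endpoints, since the quoted result only covers the open interval. If the approximation argument is awkward, I would try a different route. The function $\tilde G(x) \defeq G(a)+\int_a^x g$ is locally band preserving as an integral; by \Cref{t:FTOC} it has derivative $g$ on $(a,b)$. The difference $G-\tilde G$ is then constant on $(a,b)$ by the corollary to \Cref{t:MVT}, and that corollary again needs local band preservation only on the open interval. Continuity of both $G$ and $\tilde G$ then extends $G=\tilde G$ to the closed interval $[a,b]$.
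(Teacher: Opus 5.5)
Your proposal is correct and follows essentially the paper's argument. The paper also gets integrability of $(f\circ G)g$ from its uniform order continuity (via \Cref{p:uniformly_continuous_integrable}) and then runs the same FTOC, chain rule and \Cref{t:FTOC2} argument that proves \Cref{p:u-substitution}, while you invoke \Cref{p:u-substitution} directly; your version is also more careful, since it justifies what the paper leaves implicit: that $G$ is locally band preserving on the closed interval $[a,b]$, that $(f\circ G)g$ is locally band preserving, and that $f\circ G$ and $g$ are order bounded, which is needed to pass uniform order continuity to the product.
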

\begin{proof}
Since $f$, $G$ and $g$ are all uniformly continuous, $(f \circ G) g$ is uniformly continuous on $[a, b]$. By \Cref{t:FTOC}, $f$ has an antiderivative $F \colon [c, d] \to E$ which is uniformly continuous on $[c, d]$ and super order differentiable on $(c, d)$ with $F' = f$. Then, $F \circ G$ is uniformly continuous on $[a, b]$ and super order differentiable on $(a, b)$ with $(F \circ G)' = (f \circ G) g$ by the chain rule \cite[Theorem~4.3(iii)]{differentiation}. By \Cref{t:FTOC2},
\[ \int_a^b f(G(x)) g(x) \,dx= F(G(b)) - F(G(a)) = \int_{G(a)}^{G(b)} f(u) du. \qedhere \]
\end{proof}

\begin{proposition}
Suppose $a \ll b$. Let $f, g, f', g' \colon [a, b] \to E$ be locally band preserving, order continuous on $[a, b]$, and integrable. If $f'$ and $g'$ are the order derivatives of $f$ and $g$ on $(a, b)$, respectively. Then
\[ \int_a^b f(x) g'(x) dx = f(b) g(b) - f(a) g(a) - \int_a^b f'(x) g(x) dx. \]
\end{proposition}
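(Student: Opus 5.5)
The plan is to apply \Cref{t:FTOC2} to the product $H \defeq fg$ with integrand $h \defeq f'g + fg'$, and then split the integral by linearity. First I will verify the hypotheses on $H$. It is locally band preserving: if $\bP(x) = \bP(y)$, then $\bP(f(x)) = \bP(f(y))$ and $\bP(g(x)) = \bP(g(y))$. Since band projections on an $f$-algebra are multiplicative in the sense that $\bP(uv) = \bP(u)\bP(v)$ (bands in an $f$-algebra are ring ideals), it follows that $\bP(H(x)) = \bP(H(y))$. $H$ is order continuous on $[a, b]$ because products of order convergent nets converge in order in an $f$-algebra, using that $f$ and $g$ are order bounded on $[a,b]$ by \Cref{t:EVT}. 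By the product rule \Cref{p:diff}(ii), $H$ is order differentiable on $(a, b)$ with $H' = f'g + fg'$.

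Next I will check that $h = f'g + fg'$ meets the hypotheses of \Cref{t:FTOC2}. Each of $f, g, f', g'$ is order continuous on $[a,b]$, hence order bounded by \Cref{t:EVT}, so $h$ is order bounded. Local band preservation follows as above, since sums and products of locally band preserving functions are locally band preserving. For integrability, $f'g$ and $fg'$ are integrable by the proposition on products of integrable functions. Then \Cref{p:integral_properties}(i) shows that $h$ is integrable, with $\int_a^b h = \int_a^b f'g + \int_a^b fg'$.

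Applying \Cref{t:FTOC2} with $F = H$ and $x = a$, $y = b$ gives $\int_a^b h(x)\,dx = f(b)g(b) - f(a)g(a)$. Rearranging via linearity yields the stated identity.

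The only point needing care is that \Cref{t:FTOC2} requires $F' = h$ on $(a, b)$ while $h$ is defined on all of $[a, b]$. This is consistent because $f'$ and $g'$ are given as functions on $[a, b]$ that agree with the order derivatives on $(a, b)$. The main (mild) obstacle is justifying order continuity of the product $fg$ and the identity $\bP(uv) = \bP(u)\bP(v)$. Both are standard facts for Dedekind complete $f$-algebras, and I would cite them rather than reprove them.
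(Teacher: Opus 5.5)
Your proposal is correct and matches the paper's proof: apply the product rule to $fg$, note that $f'g + fg'$ is integrable as a sum of products of integrable functions, invoke \Cref{t:FTOC2} on $[a,b]$, and rearrange by linearity. You also check that $fg$ is locally band preserving (via $\bP(uv) = \bP(u)\bP(v)$) and order continuous (via \Cref{t:EVT}); these are hypotheses of \Cref{t:FTOC2} that the paper's proof uses without verifying, so including them is a genuine improvement.
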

\begin{proof}
We know that $fg$ is order differentiable on $(a, b)$ with $(fg)' = f g' + f' g$ by the product rule \cite[Theorem~4.3(ii)]{differentiation}, and $(fg)'$ is integrable since it is a product and sum of integrable functions. By \Cref{t:FTOC2},
\[ \int_a^b f(x) g'(x) + f'(x) g(x) dx = f(b) g(b) - f(a) g(a).\qedhere  \]
\end{proof}

\noindent
{\bf Acknowledgements.} The authors would like to express their gratitude to the Erasmus$+$ ICM Grant for facilitating the productive visits between Leiden University and the University of Pretoria. 

\bibliographystyle{alpha}
\bibliography{refs.bib}

\end{document}